\newtheorem{theorem}{Theorem}[section]
\newtheorem{lemma}[theorem]{Lemma}
\newtheorem{proposition}[theorem]{Proposition}
\theoremstyle{definition}
\theoremstyle{remark}
\newtheorem{remark}[theorem]{Remark}
\numberwithin{equation}{section}
\begin{document}
\title
{The Neumann problem of Hessian quotient equations}
\author{Chuanqiang Chen$^1$, Dekai Zhang$^2$}
\address{Chuanqiang Chen, School of Mathematics and Statistics, Ningbo University, Ningbo, 315211, Zhejiang Province, P.R. China}
\address{Dekai Zhang, Shanghai Center for Mathematical Sciences, Jiangwan Campus, Fudan University, No. 2005 Songhu Road, Shanghai, P.R. CHINA}
\thanks{ E-mail: $^1$ chuanqiangchen@zjut.edu.cn, $^2$  dkzhang@fudan.edu.cn}
\thanks{$^1$ Research of the first author was supported by NSFC NO. 11771396.}

\maketitle

\begin{abstract}
In this paper, we obtain some important inequalities of Hessian quotient operators, and global $C^2$ estimates of the Neumann problem of Hessian quotient equations. By the method of continuity, we establish the existence theorem of $k$-admissible solutions of the Neumann problem of Hessian quotient equations.
\end{abstract}
{\bf Key words}:  Hessian quotient equation, Neumann problem, $k$-admissible solution.

{\bf 2010 Mathematics Subject Classification}: 35J60, 35B45.

\section{Introduction}

In this paper, we consider the $k$-admissible solution of the Neumann problem of Hessian quotient equations
\begin{align} \label{1.1}
\frac{\sigma _k (D^2 u)}{\sigma _l (D^2 u)} = f(x),  \quad \text{in} \quad \Omega \subset \mathbb{R}^n,
\end{align}
where $0 \leq l < k \leq n$, and for any $k = 1, \cdots, n$,
\begin{equation*}
\sigma_k(D^2 u) = \sigma_k(\lambda(D^2 u)) = \sum _{1 \le i_1 < i_2 <\cdots<i_k\leq n}\lambda_{i_1}\lambda_{i_2}\cdots\lambda_{i_k},
\end{equation*}
with $\lambda(D^2 u) =(\lambda_1,\cdots,\lambda_n)$ be the eigenvalues of $D^2 u$. We also set $\sigma_0=1$.
Recall that the Garding's cone is defined as
\begin{equation*}
\Gamma_k  = \{ \lambda  \in \mathbb{R}^n :\sigma _i (\lambda ) > 0,\forall 1 \le i \le k\}.
\end{equation*}
If $\lambda(D^2 u) \in \Gamma_k$ for any $x \in \Omega$, then the equation \eqref{1.1} is elliptic (see \cite{L96}), and we say $u$ is a $k$-admissible solution of \eqref{1.1}.

If $l=0$, \eqref{1.1} is known as the $k$-Hessian equation. In particular, \eqref{1.1} is the Laplace equation if $k =1, l=0$, and the Monge-Amp\`{e}re equation if $k =n, l=0$. Hessian quotient equations are a more general form of $k$-Hessian equations, which appear naturally in classical geometry, conformal geometry and K\"{a}hler geometry, etc.

For the Dirichlet problem of elliptic equaions in $\mathbb{R}^n$, many results are known. For example, the Dirichlet problem of Laplace equation was studied in \cite{GT}, Caffarelli-Nirenberg-Spruck \cite{CNS84} and Ivochkina \cite{I87} solved the Dirichlet problem of the Monge-Amp\`{e}re equation, and Caffarelli-Nirenberg-Spruck \cite{CNS85} solved the Dirichlet problem of the $k$-Hessian equation. For the general Hessian quotient equation, the Dirichlet problem was solved by Trudinger in \cite{T95}.

Also, the Neumann or oblique derivative problem of partial differential equations
was widely studied. For a priori estimates and the existence theorem of Laplace equation
with Neumann boundary condition, we refer to the book \cite{GT}. Also, we can see the recent book written by Lieberman
\cite{L13} for the Neumann or oblique derivative problem of linear and quasilinear elliptic equations. In 1986, Lions-Trudinger-Urbas solved the Neumann problem of Monge-Amp\`{e}re equation in the celebrated paper \cite{LTU86}. For related results on the Neumann or oblique derivative problem for some class fully
nonlinear elliptic equations can be found in Urbas \cite{U95} and \cite{U96}. For the the Neumann problem of $k$-Hessian equations,  Trudinger \cite{T87}
established the existence theorem when the domain is a ball, and he conjectured (in \cite{T87}, page 305) that one could solve the problem in sufficiently smooth strictly convex domains. Recently, Ma-Qiu \cite{MQ15} gave a positive answer to this problem and solved the the Neumann problem of $k$-Hessian equations in strictly convex domains.

Naturally, we want to know how about the Neumann problem of Hessian quotient equations. In this paper, we establish global $C^2$ estimates of the Neumann problem of Hessian quotient equations and obtain the existence theorem as follows,
\begin{theorem} \label{th1.1}
Suppose that $\Omega \subset \mathbb{R}^n$ is a $C^4$ convex and strictly $(k-1)$-convex domain, $0 \leq l < k \leq n$, $\nu$ is the outer unit normal vector of $\partial \Omega$, $f \in C^2(\overline{\Omega})$ is a positive function and $\varphi \in C^3(\partial \Omega)$. Then there exists a unique $k$-admissible solution $u \in C^{3, \alpha}(\overline \Omega)$ of the Neumann problem of Hessian quotient equation
\begin{align} \label{1.2}
\left\{ \begin{array}{l}
\frac{\sigma _k (D^2 u)}{\sigma _l (D^2 u)} = f(x),  \quad \text{in} \quad \Omega,\\
u_\nu = -u + \varphi(x),\qquad \text{on} \quad \partial \Omega.
 \end{array} \right.
\end{align}
\end{theorem}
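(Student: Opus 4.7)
\medskip
\noindent\textbf{Proof proposal for Theorem \ref{th1.1}.}

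The plan is to use the method of continuity. I would introduce a one-parameter family of Neumann problems interpolating between a trivially solvable base problem at $t=0$ and the target problem \eqref{1.2} at $t=1$. A convenient choice is to fix a smooth $k$-admissible model function $\underline u$ (for instance a suitable quadratic $\underline u = A|x|^2/2$ with $A$ large enough that $\sigma_k(D^2\underline u)/\sigma_l(D^2\underline u)$ majorises $f$), and consider
\begin{align*}
\frac{\sigma_k(D^2 u^t)}{\sigma_l(D^2 u^t)} &= t f(x)+(1-t)\,\frac{\sigma_k(D^2\underline u)}{\sigma_l(D^2\underline u)}\quad \text{in }\Omega,\\
u^t_\nu+u^t &= t\,\varphi(x)+(1-t)(\underline u_\nu+\underline u)\quad \text{on }\partial\Omega,
\end{align*}
so that $u^0=\underline u$ is an admissible solution and $u^1$ solves \eqref{1.2}. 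Let $T\subset[0,1]$ be the set of $t$ for which a $k$-admissible solution $u^t\in C^{3,\alpha}(\overline\Omega)$ exists; the goal is to prove $T=[0,1]$ by showing it is nonempty, open and closed.

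Openness follows from the implicit function theorem in $C^{3,\alpha}(\overline\Omega)$. Linearising the operator at a $k$-admissible $u^t$ gives $L v=F^{ij}(D^2 u^t)\,D_{ij}v$, where $F=\sigma_k/\sigma_l$; on $\Gamma_k$ the matrix $(F^{ij})$ is positive definite, so $L$ is elliptic. The boundary operator $B v=v_\nu+v$ is strictly oblique and, crucially, has the zeroth-order coefficient $+1$, which guarantees uniqueness for the linearised problem: by the maximum principle a solution of $Lv=0$ in $\Omega$, $Bv=0$ on $\partial\Omega$ must vanish, since at an interior extremum $v\ne 0$ contradicts ellipticity, while at a boundary extremum the Hopf lemma combined with $v_\nu+v=0$ forces $v=0$. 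Hence the linearisation is a Fredholm isomorphism between the relevant Hölder spaces, giving local solvability near any $t_0\in T$. Uniqueness for the nonlinear problem follows by the same comparison argument applied to the difference of two solutions, using concavity of $(\sigma_k/\sigma_l)^{1/(k-l)}$ on $\Gamma_k$.

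Closedness is the heart of the matter and reduces to establishing uniform a priori $C^{2,\alpha}(\overline\Omega)$ estimates along the family. The $C^0$ bound is free: at a boundary maximum of $u^t$ one has $u^t_\nu\geq 0$, so $u^t\leq t\varphi+(1-t)(\underline u_\nu+\underline u)$; at a boundary minimum one has $u^t_\nu\leq 0$ and the reverse inequality holds, which with the ellipticity of $F$ and the maximum principle inside $\Omega$ controls $\|u^t\|_{C^0}$. The $C^1$ and $C^2$ estimates are where the geometric hypotheses enter: the convexity and strict $(k-1)$-convexity of $\partial\Omega$, together with the Hessian-quotient inequalities promised earlier in the abstract, are used to bound the gradient and then to bound, separately, the double-normal, mixed tangential-normal, and double-tangential second derivatives on $\partial\Omega$. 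Once $|D^2u^t|_{\partial\Omega}$ is controlled, a global maximum-principle argument applied to the largest eigenvalue of $D^2u^t$ (exploiting concavity of $F^{1/(k-l)}$) transfers the bound into $\Omega$. Given a uniform $C^2$ bound, the equation becomes uniformly elliptic with concave structure, so Evans--Krylov interior estimates together with the Lions--Trudinger--Urbas boundary $C^{2,\alpha}$ theory for concave oblique-derivative problems yield uniform $C^{2,\alpha}(\overline\Omega)$ bounds; Schauder theory then upgrades this to $C^{3,\alpha}$, closing the continuity argument.

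The main obstacle I expect is the boundary double-tangential $C^2$ estimate: unlike the Monge--Ampère or the pure $k$-Hessian case, the Hessian quotient operator does not permit the most convenient algebraic manipulations (e.g.\ direct use of $\log\det$), and the test function on $\partial\Omega$ must be designed so that the error terms generated by differentiating $\sigma_k/\sigma_l$ twice are absorbed using the sharp Newton--MacLaurin-type inequalities on $\Gamma_k$. The strict $(k-1)$-convexity of $\partial\Omega$ enters precisely here, supplying a positive lower bound for the tangential second fundamental form paired against $F^{ij}$. This is the step where the preparatory inequalities for Hessian quotient operators mentioned in the abstract are indispensable.
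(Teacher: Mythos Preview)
Your overall architecture---method of continuity, openness via linearisation and the oblique maximum principle, closedness via uniform $C^{2,\alpha}$ estimates, then Schauder---is exactly what the paper does in Section~6.1, and the uniqueness argument is the same. So the proposal is correct in outline.

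Where you diverge from the paper is in the organisation of the $C^2$ estimate, and in your identification of the hard step. You plan to bound the tangential, mixed, and double-normal second derivatives on $\partial\Omega$ separately, and you flag the double-\emph{tangential} estimate as the main obstacle. The paper proceeds in the opposite order: it first reduces the \emph{global} second-derivative bound to $\max_{\partial\Omega}|u_{\nu\nu}|$ alone (Lemma~5.2), using the auxiliary function $v(x,\xi)=u_{\xi\xi}-v'(x,\xi)+K|x|^2+|Du|^2$; the tangential and mixed boundary derivatives fall out easily in this reduction by differentiating the Neumann condition (see \eqref{5.7}--\eqref{5.13}), and the convexity of $\Omega$ is what makes the sign work in \eqref{5.11}. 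The genuine difficulty is then the two-sided bound on $u_{\nu\nu}$ (Lemmas~5.3 and~5.5), handled by barrier functions built from the boundary defining function $h=-d+d^2$. Strict $(k-1)$-convexity enters not in the tangential estimate but in Lemma~5.4, which shows $F^{ij}h_{ij}\ge c_4(\sum F^{ii}+1)$; and the ``Hessian-quotient inequalities'' you allude to are Lemmas~2.5--2.7, which control the weight $F^{11}$ against $\sum F^{ii}$ and are used precisely in the double-normal barrier arguments. So your expectation about where the preparatory inequalities are spent is misplaced: they drive the $u_{\nu\nu}$ estimate, not the tangential one.

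One small gap: your $C^0$ lower bound is incomplete. Since $u$ is only subharmonic, the maximum is on $\partial\Omega$ but the minimum need not be; the paper (Theorem~3.1) compares $u$ with a quadratic $A|x-x_1|^2$ to force the minimum of the difference to the boundary before invoking the Neumann condition.
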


\begin{remark}
The $C^2$ domain $\Omega \subset \mathbb{R}^n$ is convex, that is, $\kappa_i(x) \geq 0$ for any $x \in \partial \Omega$ and $i =1, \cdots, n-1$, or equivalently, $\kappa (x) = (\kappa_1, \cdots, \kappa_{n-1}) \in \overline{\Gamma_{n-1}}$ for any $x \in \partial \Omega$, where $\kappa (x) = (\kappa_1, \cdots, \kappa_{n-1})$ denote the principal curvatures of $\partial \Omega$ with respect to its inner normal. Similarly,
$\Omega$ is strictly $(k-1)$-convex, in the sense of $\kappa (x) = (\kappa_1, \cdots, \kappa_{n-1}) \in \Gamma_{k-1}$ for any $x \in \partial \Omega$. For simplify, a domain is called strictly convex if it is strictly $(n-1)$-convex.
\end{remark}

Following the idea in \cite{QX16}, we can obtain the existence theorem of the classical Neumann problem of Hessian quotient equations
\begin{theorem}\label{th1.2}
Suppose that $\Omega \subset \mathbb{R}^n$ is a $C^4$ strictly convex domain, $0 \leq l < k \leq n$, $\nu$ is the outer unit normal vector of $\partial \Omega$, $f \in C^2(\overline{\Omega})$ is a positive function and $\varphi \in C^3(\partial \Omega)$. Then there exists a unique constant $c$, such that the Neumann problem of the Hessian quotient equation
\begin{align} \label{1.3}
\left\{ \begin{array}{l}
\frac{\sigma _k (D^2 u)}{\sigma _l (D^2 u)} = f(x),  \quad \text{in} \quad \Omega,\\
u_\nu = c + \varphi(x),\qquad \text{on} \quad \partial \Omega,
 \end{array} \right.
\end{align}
has $k$-admissible solutions $u \in C^{3, \alpha}(\overline \Omega)$, which are unique up to a constant.
\end{theorem}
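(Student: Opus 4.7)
The plan is to obtain solutions of \eqref{1.3} as the $\epsilon\to 0^+$ limit of a family of solutions to the auxiliary Neumann problems
\begin{align*}
\frac{\sigma_k(D^2 u^\epsilon)}{\sigma_l(D^2 u^\epsilon)} = f(x) \text{ in } \Omega, \qquad u^\epsilon_\nu = -\epsilon u^\epsilon + \varphi(x) \text{ on } \partial\Omega.
\end{align*}
For each fixed $\epsilon\in(0,1]$, the scaling $\tilde u := \epsilon u^\epsilon$ reduces this to a problem of the form treated in Theorem \ref{th1.1} (with $f$ and $\varphi$ replaced by $\epsilon^{k-l}f$ and $\epsilon\varphi$), and so a unique $k$-admissible solution $u^\epsilon\in C^{3,\alpha}(\overline\Omega)$ exists. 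I then fix a reference point $x_0\in\Omega$ and set $v^\epsilon := u^\epsilon-u^\epsilon(x_0)$ and $c_\epsilon := -\epsilon u^\epsilon(x_0)$, so that $v^\epsilon(x_0)=0$ and
\begin{align*}
\frac{\sigma_k(D^2 v^\epsilon)}{\sigma_l(D^2 v^\epsilon)} = f, \qquad v^\epsilon_\nu = c_\epsilon + \varphi - \epsilon v^\epsilon \text{ on } \partial\Omega.
\end{align*}

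Next I would rerun the a priori $C^0$, $C^1$ and $C^2$ estimates developed for Theorem \ref{th1.1} on the normalized family $v^\epsilon$, taking care that every barrier and auxiliary function is constructed using the \emph{strict convexity} of $\Omega$ (the hypothesis of Theorem \ref{th1.2}) rather than merely its strict $(k-1)$-convexity. Once $\|v^\epsilon\|_{C^2(\overline\Omega)}\le C$ uniformly in $\epsilon$, the boundary condition at once yields
\begin{align*}
|c_\epsilon| \le \|v^\epsilon_\nu\|_\infty + \|\varphi\|_\infty + \epsilon\|v^\epsilon\|_\infty \le C.
\end{align*}
The Evans--Krylov theorem together with the Schauder theory for oblique boundary problems then upgrades this to a uniform $C^{3,\alpha}$ bound, so along a subsequence $\epsilon_j\to 0$ one extracts $c_{\epsilon_j}\to c$ and $v^{\epsilon_j}\to u$ in $C^{3,\alpha/2}(\overline\Omega)$, producing a $k$-admissible solution $u\in C^{3,\alpha}(\overline\Omega)$ of \eqref{1.3}.

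For uniqueness up to a constant, suppose $u_1$ and $u_2$ are two $k$-admissible solutions corresponding to constants $c_1,c_2$. The operator $F:=(\sigma_k/\sigma_l)^{1/(k-l)}$ is concave and monotone on $\Gamma_k$, so $w:=u_1-u_2$ satisfies a uniformly elliptic linear equation $a^{ij}(x)w_{ij}=0$, with $a^{ij}$ obtained by integrating $\partial F/\partial r_{ij}$ along the segment between $D^2 u_2$ and $D^2 u_1$. If $c_1>c_2$, then $w_\nu=c_1-c_2>0$ everywhere on $\partial\Omega$; applying the strong maximum principle and Hopf's lemma at the minimum of $w$ yields $w_\nu(x_{\min})<0$, a contradiction. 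Hence $c_1=c_2$, after which $w$ satisfies $a^{ij}w_{ij}=0$ with homogeneous Neumann data and must be constant.

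The main obstacle is the uniformity in $\epsilon$ of the a priori estimates after the normalization: one has to verify that each barrier and auxiliary construction in the proof of Theorem \ref{th1.1} can be carried out with constants independent of $\epsilon$ when the boundary datum is $c_\epsilon+\varphi-\epsilon v^\epsilon$, and it is precisely the strict convexity of $\Omega$ that supplies the required quantitative control (in particular, the uniform lower bound on the principal curvatures of $\partial\Omega$). Once these uniform estimates are in hand, the compactness argument, the passage to the limit, and the uniqueness argument above are entirely standard.
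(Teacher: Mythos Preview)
Your overall strategy coincides with the paper's: solve the auxiliary problems $(u^\varepsilon)_\nu=-\varepsilon u^\varepsilon+\varphi$, obtain $\varepsilon$-independent $C^2$ bounds (the paper isolates these as Theorems~\ref{th3.2}, \ref{th4.4} and \ref{th5.6}, with the strict convexity entering decisively in the gradient estimate), normalize and extract a convergent subsequence, and deduce uniqueness of $(c,u\bmod\text{const})$ via the maximum principle and Hopf lemma. The paper normalizes by subtracting the average $\frac{1}{|\Omega|}\int_\Omega u^\varepsilon$ rather than a point value, but this is immaterial.

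One correction, though: your scaling $\tilde u=\varepsilon u^\varepsilon$ does \emph{not} reduce the boundary condition to the form in Theorem~\ref{th1.1}. A direct computation gives $\tilde u_\nu=\varepsilon(u^\varepsilon)_\nu=\varepsilon(-\varepsilon u^\varepsilon+\varphi)=-\varepsilon\tilde u+\varepsilon\varphi$, not $-\tilde u+\varepsilon\varphi$, so the coefficient of $\tilde u$ is still $\varepsilon$ rather than $1$. The existence of $u^\varepsilon$ for each fixed $\varepsilon>0$ is therefore not a corollary of Theorem~\ref{th1.1} via scaling; instead (as the paper puts it) one reruns the continuity method of Theorem~\ref{th1.1} with the boundary operator $u_\nu+\varepsilon u$, which goes through without change since only the sign (not the specific value) of the coefficient in front of $u$ matters for those estimates.
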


\begin{remark} \label{rmk1.4}
For the classical Neumann problem of Hessian quotient equations \eqref{1.3}, it is easy to know that a solution plus any constant is still a solution. So we cannot obtain a uniform bound for the solutions of \eqref{1.3}, and cannot use the method of continuity directly to get the existence. As in Lions-Trudinger-Urbas \cite{LTU86} and Qiu-Xia \cite{QX16}, we consider the $k$-admissible solution $u^\varepsilon$ of the equation
\begin{align}\label{1.4}
\left\{ \begin{array}{l}
\frac{\sigma _k (D^2 u^\varepsilon)}{\sigma _l (D^2 u^\varepsilon)} = f(x),  \qquad \qquad \text{in} \quad \Omega,\\
(u^\varepsilon)_\nu = - \varepsilon u^\varepsilon  + \varphi(x),\qquad \text{on} \quad \partial \Omega,
 \end{array} \right.
\end{align}
for any small $\varepsilon >0$. We need to establish a priori estimates of $u^\varepsilon$ independent of $\varepsilon$, and the strict convexity of $\Omega$ plays an important role. By letting $\varepsilon \rightarrow 0$ and a perturbation argument, we can obtain a solution of \eqref{1.3}. The uniqueness holds from the maximum principle and Hopf Lemma.
\end{remark}

\begin{remark}
In the recent papers \cite{JT15, JT16}, Jiang and Trudinger studied the general oblique boundary value problems for augmented Hessian equations with some regular conditions and some concavity conditions. But here, the problems \eqref{1.2} and \eqref{1.3} do not satisfy the strictly regular condition and the uniform concavity condition.
\end{remark}

\begin{remark}
As we all know, the Dirichlet problems of Hessian and Hessian quotient equations are solved in strictly $(k-1)$-convex domains. For the Neumann problems, we also want to know the existence results in strictly $(k-1)$-convex but not convex domains. A special case, that is
\begin{align} \label{1.5}
\left\{ \begin{array}{l}
\sigma _k (D^2 u) = f(x),  \quad \text{in} \quad \Omega \subset \mathbb{R}^n,\\
u_\nu = -u,\qquad \text{on} \quad \partial \Omega,
 \end{array} \right.
\end{align}
is solvable in strictly $(k-1)$-convex domains, and see \cite{Q16} for the proof. For general cases, the problem is open.
\end{remark}

The rest of the paper is organized as follows. In Section 2, we collect some properties of the lementary symmetric function $\sigma_k$, and establish some key inequalities of Hessian quotient operators. Following the idea of Lions-Trudinger-Urbas \cite{LTU86} and Ma-Qiu \cite{MQ15}, we establish the $C^0$, $C^1$ and $C^2$ estimates for the Neumann problem of Hessian quotient equations in Section 3, Section 4, Section 5, respectively. At last, we prove Theorem \ref{th1.1} and Theorem \ref{th1.2} in Section 6.

\section{preliminary}

In this section, we give some basic properties
of elementary symmetric functions, which could be found in
\cite{L96}, and establish some key inequalities of Hessian quotient operators.

\subsection{Basic properties of elementary symmetric functions}

First, we denote by $\sigma _k (\lambda \left| i \right.)$ the symmetric
function with $\lambda_i = 0$ and $\sigma _k (\lambda \left| ij
\right.)$ the symmetric function with $\lambda_i =\lambda_j = 0$.
\begin{proposition}\label{prop2.1}
Let $\lambda=(\lambda_1,\dots,\lambda_n)\in\mathbb{R}^n$ and $k
= 1, \cdots,n$, then
\begin{align*}
&\sigma_k(\lambda)=\sigma_k(\lambda|i)+\lambda_i\sigma_{k-1}(\lambda|i), \quad \forall \,1\leq i\leq n,\\
&\sum_i \lambda_i\sigma_{k-1}(\lambda|i)=k\sigma_{k}(\lambda),\\
&\sum_i\sigma_{k}(\lambda|i)=(n-k)\sigma_{k}(\lambda).
\end{align*}
\end{proposition}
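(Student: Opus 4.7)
The plan is to prove all three identities by direct combinatorial/counting arguments on the monomials $\lambda_{i_1}\cdots\lambda_{i_k}$ that appear in $\sigma_k(\lambda)$. None of the three is deep; the work is simply to bookkeep which index sets a given monomial belongs to.

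First, for the splitting identity $\sigma_k(\lambda)=\sigma_k(\lambda|i)+\lambda_i\sigma_{k-1}(\lambda|i)$, I would fix $i$ and partition the index sets $\{i_1<\cdots<i_k\}\subset\{1,\ldots,n\}$ according to whether they contain $i$ or not. The sum over those not containing $i$ is, by definition, exactly $\sigma_k(\lambda|i)$. The sum over those containing $i$ factors as $\lambda_i$ times the sum of all $(k-1)$-fold products drawn from the remaining indices, which is $\lambda_i\sigma_{k-1}(\lambda|i)$. Adding gives the claim.

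Next, for $\sum_i\lambda_i\sigma_{k-1}(\lambda|i)=k\sigma_k(\lambda)$, I would expand the left-hand side in terms of monomials: a typical monomial $\lambda_{i_1}\cdots\lambda_{i_k}$ of $\sigma_k(\lambda)$ arises from the $i$-th summand precisely when $i\in\{i_1,\ldots,i_k\}$, and it arises exactly once from each such $i$. Hence every monomial of $\sigma_k(\lambda)$ is counted exactly $k$ times on the left, which gives the identity. Alternatively one may sum the first identity over $i$, obtaining $n\sigma_k(\lambda)=\sum_i\sigma_k(\lambda|i)+\sum_i\lambda_i\sigma_{k-1}(\lambda|i)$, and then combine with the third identity.

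Finally, for $\sum_i\sigma_k(\lambda|i)=(n-k)\sigma_k(\lambda)$, the same counting argument works: a monomial $\lambda_{i_1}\cdots\lambda_{i_k}$ appears in $\sigma_k(\lambda|i)$ precisely when $i\notin\{i_1,\ldots,i_k\}$, and there are exactly $n-k$ such indices $i$, giving the stated factor. Since these are purely formal identities about elementary symmetric polynomials, there is no real obstacle; the only care needed is to make the multiplicity count unambiguous, which the expansion in monomials handles cleanly.
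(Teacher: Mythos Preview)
Your argument is correct: the three identities are standard and your monomial-counting proofs are valid and complete. Note that the paper itself does not supply a proof of this proposition; it is stated as a well-known fact with a reference to \cite{L96}, so your write-up already goes beyond what the paper provides.
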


We also denote by $\sigma _k (W \left|
i \right.)$ the symmetric function with $W$ deleting the $i$-row and
$i$-column and $\sigma _k (W \left| ij \right.)$ the symmetric
function with $W$ deleting the $i,j$-rows and $i,j$-columns. Then
we have the following identities.
\begin{proposition}\label{prop2.2}
Suppose $W=(W_{ij})$ is diagonal, and $m$ is a positive integer,
then
\begin{align*}
\frac{{\partial \sigma _m (W)}} {{\partial W_{ij} }} = \begin{cases}
\sigma _{m - 1} (W\left| i \right.), &\text{if } i = j, \\
0, &\text{if } i \ne j.
\end{cases}
\end{align*}
\end{proposition}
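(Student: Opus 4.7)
The plan is to prove Proposition 2.2 by writing $\sigma_m(W)$ as a sum of principal minors of $W$ and then differentiating minor by minor. Specifically, for a symmetric matrix $W$ I would start from the identity
\begin{equation*}
\sigma_m(W) = \sum_{|I| = m} \det(W_I),
\end{equation*}
where $I$ ranges over $m$-subsets of $\{1,\dots,n\}$ and $W_I$ is the principal submatrix indexed by $I$. This reduces everything to computing $\partial \det(W_I)/\partial W_{ij}$.

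Next I would expand each $\det(W_I)$ by the Leibniz formula and observe that $W_{ij}$ only enters if both $i,j \in I$, in which case $\partial \det(W_I)/\partial W_{ij}$ equals the $(i,j)$-cofactor $(-1)^{i+j}\det(W_I^{ij})$, where $W_I^{ij}$ is the submatrix obtained from $W_I$ by deleting row $i$ and column $j$. For $I$ with $\{i,j\} \not\subset I$ the derivative is simply zero.

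Now I would specialise to diagonal $W$. If $i \neq j$ and $\{i,j\} \subset I$, the submatrix $W_I^{ij}$ has rows indexed by $I \setminus \{i\}$ and columns indexed by $I \setminus \{j\}$; since $W$ is diagonal the entry in column $i$ of this submatrix is $W_{ki}=0$ for every row $k \in I \setminus \{i\}$ (because $k \neq i$), so $W_I^{ij}$ has an entire zero column and its determinant vanishes. Summing over all $I$ with $|I|=m$ then yields $\partial \sigma_m(W)/\partial W_{ij}=0$. For $i=j$, the cofactor becomes $\det(W_I^{ii}) = \prod_{k \in I \setminus \{i\}} W_{kk}$, and summing over all $I$ containing $i$ gives
\begin{equation*}
\frac{\partial \sigma_m(W)}{\partial W_{ii}} = \sum_{\substack{|J| = m-1 \\ i \notin J}} \prod_{k \in J} W_{kk} = \sigma_{m-1}(W|i),
\end{equation*}
which is the claim. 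The only step that requires any care is the off-diagonal case, where one must correctly identify the zero column (or row) in the deleted submatrix; everything else is bookkeeping with the cofactor expansion.
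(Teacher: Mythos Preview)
Your argument is correct: the principal-minors identity $\sigma_m(W)=\sum_{|I|=m}\det(W_I)$ together with the cofactor formula for $\partial\det(W_I)/\partial W_{ij}$ gives exactly what is needed, and your zero-column observation in the off-diagonal case is the right way to see the vanishing when $W$ is diagonal. One small remark: the identity $\sigma_m(W)=\sum_{|I|=m}\det(W_I)$ holds for any square matrix (it is read off from the characteristic polynomial), so the phrase ``for a symmetric matrix $W$'' is not needed; this also makes it clear that you are treating all $n^2$ entries $W_{ij}$ as independent variables, which is the convention implicit in the proposition.

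As for comparison with the paper: the paper does not supply a proof of this proposition at all---it is listed among the ``basic properties of elementary symmetric functions'' with a blanket reference to the literature. So your write-up is not competing with any argument in the text; it simply fills in a standard fact that the authors take for granted.
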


Recall that the Garding's cone is defined as
\begin{equation}\label{2.1}
\Gamma_k  = \{ \lambda  \in \mathbb{R}^n :\sigma _i (\lambda ) >
0,\forall 1 \le i \le k\}.
\end{equation}

\begin{proposition}\label{prop2.3}
Let $\lambda \in \Gamma_k$ and $k \in \{1,2, \cdots, n\}$. Suppose that
$$
\lambda_1 \geq \cdots \geq \lambda_k \geq \cdots \geq \lambda_n,
$$
then we have
\begin{align}
\label{2.2}& \sigma_{k-1} (\lambda|n) \geq \sigma_{k-1} (\lambda|n-1) \geq \cdots \geq \sigma_{k-1} (\lambda|k) \geq \cdots \geq \sigma_{k-1} (\lambda|1) >0; \\
\label{2.3}& \lambda_1 \geq \cdots \geq \lambda_k >  0, \quad \sigma _k (\lambda)\leq C_n^k  \lambda_1 \cdots \lambda_k; \\
\label{2.4}& \lambda _1 \sigma _{k - 1} (\lambda |1) \geq \frac{k} {{n}}\sigma _k (\lambda),
\end{align}
where $C_n^k = \frac{n!}{k! (n-k)!}$.
\end{proposition}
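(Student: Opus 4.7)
All three inequalities are structural facts about elementary symmetric functions on $\Gamma_k$, and the natural tools are the identities of Proposition \ref{prop2.1} together with the cone-restriction property: if $\lambda\in\Gamma_k$ then $(\lambda|i)\in\Gamma_{k-1}$ as a vector in $\mathbb{R}^{n-1}$, so in particular $\sigma_{k-1}(\lambda|i)>0$ (the ellipticity of $\sigma_k$); iterating gives $(\lambda|ij)\in\Gamma_{k-2}$, and so on.

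For \eqref{2.2} I would apply Proposition \ref{prop2.1} to both $(\lambda|i)$ and $(\lambda|j)$, regarded as vectors in $\mathbb{R}^{n-1}$, to obtain
\[
\sigma_{k-1}(\lambda|i)-\sigma_{k-1}(\lambda|j)=(\lambda_j-\lambda_i)\,\sigma_{k-2}(\lambda|ij).
\]
For $i<j$ one has $\lambda_j\le\lambda_i$, while two applications of cone-restriction give $\sigma_{k-2}(\lambda|ij)>0$ (with $\sigma_0\equiv 1$ when $k=2$), so the right-hand side is non-positive and the monotone chain follows; the strict positivity of the minimal term $\sigma_{k-1}(\lambda|1)$ is cone-restriction applied once.

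For \eqref{2.3}, the positivity $\lambda_1,\dots,\lambda_k>0$ is the standard fact that a $\Gamma_k$-vector has at least $k$ positive coordinates, provable by induction on $k$ via $\sigma_k=\sigma_k(\lambda|i)+\lambda_i\sigma_{k-1}(\lambda|i)$. For the upper bound I would use a monotone reduction: since $\partial\sigma_j/\partial\lambda_i=\sigma_{j-1}(\lambda|i)>0$ on $\Gamma_k$ for every $j\le k$, raising any $\lambda_i$ with $i>k$ up to the value $\lambda_k$ keeps $\lambda$ inside $\Gamma_k$ and only increases $\sigma_k$. After this reduction every coordinate belongs to the positive set $\{\lambda_1,\dots,\lambda_k\}$; for every $I=\{i_1<\cdots<i_k\}$ the bound $i_j\ge j$ forces the new $\lambda_{i_j}\le\lambda_j$, so $\prod_j\lambda_{i_j}\le\lambda_1\cdots\lambda_k$, and summation over the $C_n^k$ subsets delivers the inequality.

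For \eqref{2.4}, the identity $\sigma_k(\lambda)=\sigma_k(\lambda|1)+\lambda_1\sigma_{k-1}(\lambda|1)$ rewrites the target as
\[
(n-k)\,\lambda_1\,\sigma_{k-1}(\lambda|1)\ge k\,\sigma_k(\lambda|1).
\]
If $\sigma_k(\lambda|1)\le 0$ this is immediate because the left-hand side is non-negative. Otherwise $(\lambda|1)\in\Gamma_k$ in $\mathbb{R}^{n-1}$, so Maclaurin's inequality applied to $(\lambda|1)$ yields $k\,\sigma_k(\lambda|1)\le\tfrac{n-k}{n-1}\,\sigma_1(\lambda|1)\,\sigma_{k-1}(\lambda|1)$, and combined with $\sigma_1(\lambda|1)=\sum_{i\ge 2}\lambda_i\le(n-1)\lambda_1$ this closes the bound. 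The main obstacle is the monotone reduction step in \eqref{2.3}: one must carefully check that pushing each $\lambda_i$ with $i>k$ up to $\lambda_k$ neither leaves $\Gamma_k$ nor breaks the assumed ordering, after which the term-by-term comparison is routine and the remaining two estimates drop out directly from the identities above.
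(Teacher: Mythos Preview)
Your proposal is correct and in fact does considerably more than the paper itself: the paper gives no argument for Proposition~\ref{prop2.3}, deferring \eqref{2.2} to \cite{L96} and \cite{HS99}, \eqref{2.3} to \cite{L91}, and \eqref{2.4} to \cite{CW01} and \cite{HMW10}. Your self-contained derivations are sound. The difference identity for \eqref{2.2} combined with the cone-restriction fact $(\lambda|i)\in\Gamma_{k-1}$ is the standard route and matches what one finds in the cited sources. For \eqref{2.3}, your monotone reduction works; the point you flag as the main obstacle is dispatched by the observation that $\Gamma_k$ is an open convex cone containing $\overline{\Gamma_n}$, so $\Gamma_k+\overline{\Gamma_n}\subset\Gamma_k$, whence raising any coordinate keeps one inside $\Gamma_k$ and the ellipticity $\sigma_{k-1}(\cdot|i)>0$ persists along the entire path. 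For \eqref{2.4}, your argument via Maclaurin's inequality on $(\lambda|1)$ is a legitimate alternative, though note it invokes the Newton--Maclaurin inequality (the paper's Proposition~\ref{prop2.4}), which is stated only \emph{after} Proposition~\ref{prop2.3}; the references \cite{CW01} and \cite{HMW10} prove \eqref{2.4} more directly without that detour. This is not a logical gap, since the paper treats both propositions as imported black boxes anyway, but it is worth being aware of the dependency.
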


\begin{proof}
All the properties are well known. For example, see \cite{L96} or \cite{HS99} for a proof of \eqref{2.2},
\cite{L91} for \eqref{2.3}, and \cite{CW01} or \cite{HMW10} for \eqref{2.4}.
\end{proof}

The generalized Newton-MacLaurin inequality is as follows, which will be used all the time.
\begin{proposition}\label{prop2.4}
For $\lambda \in \Gamma_k$ and $k > l \geq 0$, $ r > s \geq 0$, $k \geq r$, $l \geq s$, we have
\begin{align} \label{2.5}
\Bigg[\frac{{\sigma _k (\lambda )}/{C_n^k }}{{\sigma _l (\lambda )}/{C_n^l }}\Bigg]^{\frac{1}{k-l}}
\le \Bigg[\frac{{\sigma _r (\lambda )}/{C_n^r }}{{\sigma _s (\lambda )}/{C_n^s }}\Bigg]^{\frac{1}{r-s}}.
\end{align}
\end{proposition}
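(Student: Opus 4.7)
The plan is to reduce \eqref{2.5} to the log-concavity of the normalized elementary symmetric polynomials. Set $S_m := \sigma_m(\lambda)/C_n^m$ for $0 \le m \le k$; since $\lambda \in \Gamma_k$, each such $S_m$ is strictly positive, so $\phi(m) := \log S_m(\lambda)$ is a well-defined real-valued sequence on $\{0, 1, \dots, k\}$.

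The main ingredient I would cite is the classical Newton inequality
\begin{equation*}
S_m(\lambda)^2 \ge S_{m-1}(\lambda)\, S_{m+1}(\lambda), \qquad 1 \le m \le n-1,
\end{equation*}
valid for every $\lambda \in \mathbb{R}^n$ because $\prod_i(1 + t\lambda_i)$ is a real-rooted polynomial; this is recorded in \cite{L96}. For $\lambda \in \Gamma_k$ it is equivalent to
\begin{equation*}
\phi(m+1) - \phi(m) \le \phi(m) - \phi(m-1), \qquad 1 \le m \le k-1,
\end{equation*}
so the forward differences $\delta_j := \phi(j+1) - \phi(j)$ form a non-increasing sequence on $\{0, 1, \dots, k-1\}$; equivalently, $\phi$ is a concave sequence on $\{0, 1, \dots, k\}$.

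Next I would extract the following elementary consequence of discrete concavity: whenever integers $a, b, c, e \in \{0, \dots, k\}$ satisfy $a \le c$, $b \le e$, $a < b$ and $c < e$,
\begin{equation*}
\frac{\phi(b) - \phi(a)}{b - a} \;\ge\; \frac{\phi(e) - \phi(c)}{e - c}.
\end{equation*}
Indeed, $(\phi(b)-\phi(a))/(b-a)$ is the arithmetic mean of the window $\delta_a, \delta_{a+1}, \dots, \delta_{b-1}$ drawn from the non-increasing sequence $\{\delta_j\}$. The transition $(a, b) \to (c, e)$ factors as $(a, b) \to (a, e) \to (c, e)$; the first step appends $\delta_b, \dots, \delta_{e-1}$, which are $\le$ every entry in the old window, and the second step drops $\delta_a, \dots, \delta_{c-1}$, which are $\ge$ every entry in the retained window, so the arithmetic mean cannot increase at either step.

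Finally, the hypotheses $s \le l$, $r \le k$, $s < r$, $l < k$ give exactly the required configuration for $(a, b) = (s, r)$ and $(c, e) = (l, k)$, so the slope inequality produces
\begin{equation*}
\frac{\log S_r - \log S_s}{r - s} \;\ge\; \frac{\log S_k - \log S_l}{k - l},
\end{equation*}
which exponentiates to \eqref{2.5}. The only nontrivial technical input is Newton's inequality itself; the slope-monotonicity step is elementary but must be handled carefully when the windows $[s, r]$ and $[l, k]$ overlap, which is where I expect the bookkeeping to require the most attention.
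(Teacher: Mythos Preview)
Your argument is correct: Newton's inequality gives log-concavity of the sequence $m\mapsto \log S_m$ on $\{0,\dots,k\}$, and your slope-monotonicity step (factoring $(s,r)\to(s,k)\to(l,k)$ and tracking how appending small $\delta_j$'s and dropping large ones moves the average) is clean and handles the overlapping-window case without issue. The paper itself gives no proof here---it simply cites \cite{S05}---and the route you take is the standard one, so there is nothing to compare.
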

\begin{proof}
See \cite{S05}.
\end{proof}

\subsection{Key Lemmas}

In the establishment of the a priori estimates, the following inequalities of Hessian quotient operators play an important role.

\begin{lemma} \label{lem2.5}
Suppose $\lambda = (\lambda_1, \lambda_2, \cdots, \lambda_n) \in \Gamma_k$, $k \geq 1$, and $ \lambda_1 <0$. Then we have
\begin{align} \label{2.6}
\sigma_{m} (\lambda |1)  \geq   \sigma_{m} (\lambda), \quad \forall \quad m =0, 1, \cdots, k.
\end{align}
Moreover, we have
\begin{align} \label{2.7}
\frac{{\partial [\frac{{\sigma _k (\lambda )}}{{\sigma _l (\lambda )}}]}}
{{\partial \lambda _1 }} \geqslant \frac{n}{k}\frac{{k - l}}{{n - l}}\frac{1}
{{n - k + 1}}\sum\limits_{i = 1}^n {\frac{{\partial [\frac{{\sigma _k (\lambda )}}
{{\sigma _l (\lambda )}}]}}{{\partial \lambda _i }}}, \quad \forall \quad 0 \leq l < k.
\end{align}
\end{lemma}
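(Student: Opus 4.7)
The plan is to establish \eqref{2.6} by induction on $m$ and then to leverage it---together with a cancellation identity and the generalized Newton-MacLaurin inequality (Proposition \ref{prop2.4})---to prove \eqref{2.7}. For \eqref{2.6}, I would induct on $m$: the case $m=0$ is trivial since $\sigma_0\equiv 1$, and for the step Proposition \ref{prop2.1} gives $\sigma_m(\lambda|1)-\sigma_m(\lambda)=-\lambda_1\,\sigma_{m-1}(\lambda|1)$, so $\lambda_1<0$ reduces the question to $\sigma_{m-1}(\lambda|1)\geq 0$; this is $1$ for $m=1$, and for $m\geq 2$ the inductive hypothesis together with $\sigma_{m-1}(\lambda)>0$ (valid since $m-1\leq k-1<k$ and $\lambda\in\Gamma_k$) yields $\sigma_{m-1}(\lambda|1)\geq\sigma_{m-1}(\lambda)>0$. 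A by-product is that $(\lambda|1)\in\Gamma_k$, which I will need below.

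For \eqref{2.7}, the central observation is a cancellation identity. Applying the decomposition $\sigma_j=\sigma_j(\lambda|1)+\lambda_1\sigma_{j-1}(\lambda|1)$ from Proposition \ref{prop2.1} to both $\sigma_k$ and $\sigma_l$ inside $\sigma_{k-1}(\lambda|1)\sigma_l-\sigma_k\sigma_{l-1}(\lambda|1)$, the $\lambda_1$ cross-terms cancel and I obtain
\[
\sigma_l^2\,\frac{\partial(\sigma_k/\sigma_l)}{\partial\lambda_1} = \sigma_{k-1}(\lambda|1)\,\sigma_l(\lambda|1) - \sigma_k(\lambda|1)\,\sigma_{l-1}(\lambda|1).
\]
Every factor on the right now lives on $(\lambda|1)\in\Gamma_k$, so I would apply Proposition \ref{prop2.4} in $n-1$ variables to $(\lambda|1)$ with $(r,s)=(k-1,l-1)$ (the $l=0$ case is trivial). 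This gives $\sigma_k(\lambda|1)\sigma_{l-1}(\lambda|1) \leq \tfrac{l(n-k)}{k(n-l)}\sigma_{k-1}(\lambda|1)\sigma_l(\lambda|1)$, and hence
\[
\sigma_l^2\,\frac{\partial(\sigma_k/\sigma_l)}{\partial\lambda_1} \geq \tfrac{n(k-l)}{k(n-l)}\,\sigma_{k-1}(\lambda|1)\,\sigma_l(\lambda|1).
\]

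To conclude, I would invoke \eqref{2.6} with $m=k-1$ and $m=l$ to replace the right-hand side by the smaller quantity $\tfrac{n(k-l)}{k(n-l)}\sigma_{k-1}\sigma_l$. On the other side, Proposition \ref{prop2.1} gives
\[
\sigma_l^2\sum_{i=1}^n \frac{\partial(\sigma_k/\sigma_l)}{\partial\lambda_i} = (n-k+1)\sigma_{k-1}\sigma_l - (n-l+1)\sigma_k\sigma_{l-1},
\]
so multiplying by $\tfrac{n(k-l)}{k(n-l)(n-k+1)}$ and discarding the non-positive contribution $-\tfrac{n(k-l)(n-l+1)}{k(n-l)(n-k+1)}\sigma_k\sigma_{l-1}$ produces the matching upper bound $\tfrac{n(k-l)}{k(n-l)}\sigma_{k-1}\sigma_l$ for the right-hand side of \eqref{2.7}. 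The main obstacle, as I see it, is finding the right form of $\sigma_l^2\,\partial_{\lambda_1}[\sigma_k/\sigma_l]$: the original expression $\sigma_{k-1}(\lambda|1)\sigma_l-\sigma_k\sigma_{l-1}(\lambda|1)$ resists a direct term-by-term estimate against $\sum_i\partial_{\lambda_i}[\sigma_k/\sigma_l]$ because the coefficient $\tfrac{n(k-l)(n-l+1)}{k(n-l)(n-k+1)}$ may be less than $1$ (e.g.\ it equals $20/27$ when $(n,k,l)=(5,3,2)$), leaving one of the two resulting terms with the wrong sign; it is the equivalent cancellation form above that opens the door to Newton-MacLaurin.
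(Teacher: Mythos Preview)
Your proposal is correct and follows essentially the same route as the paper: both arguments establish \eqref{2.6} via the decomposition $\sigma_m(\lambda)=\sigma_m(\lambda|1)+\lambda_1\sigma_{m-1}(\lambda|1)$ (your inductive justification of $\sigma_{m-1}(\lambda|1)\ge 0$ is a welcome clarification the paper leaves implicit), and both prove \eqref{2.7} by the cancellation identity rewriting $\sigma_l^2\,\partial_{\lambda_1}(\sigma_k/\sigma_l)$ purely in terms of $(\lambda|1)$, then applying Newton--MacLaurin in $n-1$ variables and using \eqref{2.6} to pass back, finally comparing with the sum formula bounded above by $(n-k+1)\sigma_{k-1}\sigma_l/\sigma_l^2$. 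Your observation that $(\lambda|1)\in\Gamma_k$ is exactly what is needed to legitimize the Newton--MacLaurin step, and your remark about why the ``naive'' term-by-term estimate fails is a nice diagnostic that the paper omits.
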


\begin{proof}
Firstly, we can easily get
\[
\sigma _m (\lambda ) = \sigma _m (\lambda |1) + \lambda _1 \sigma _{m - 1} (\lambda |1) \leqslant \sigma _m (\lambda |1),
\]
so \eqref{2.6} holds.

Directly calculations yield
\begin{align} \label{2.8}
\sum\limits_{i = 1}^n {\frac{{\partial [\frac{{\sigma _k (\lambda )}}
{{\sigma _l (\lambda )}}]}}{{\partial \lambda _i }}}  =& \sum\limits_{i = 1}^n {\frac{{\sigma _{k - 1} (\lambda |i)\sigma _l (\lambda ) - \sigma _k (\lambda )\sigma _{l - 1} (\lambda |i)}} {{\sigma _l (\lambda )^2 }}}  \notag \\
=& \frac{{(n - k + 1)\sigma _{k - 1} (\lambda )\sigma _l (\lambda ) - (n - l + 1)\sigma _k (\lambda )\sigma _{l - 1} (\lambda )}}
{{\sigma _l (\lambda )^2 }} \notag \\
\leq& (n - k + 1)\frac{{\sigma _{k - 1} (\lambda )\sigma _l (\lambda )}} {{\sigma _l (\lambda )^2 }},
\end{align}
hence we can get
\begin{align} \label{2.9}
  \frac{{\partial [\frac{{\sigma _k (\lambda )}} {{\sigma _l (\lambda )}}]}}
{{\partial \lambda _1 }} =& \frac{{\sigma _{k - 1} (\lambda |1)\sigma _l (\lambda ) - \sigma _k (\lambda )\sigma _{l - 1} (\lambda |1)}}
{{\sigma _l (\lambda )^2 }} \notag \\
=& \frac{{\sigma _{k - 1} (\lambda |1)\sigma _l (\lambda |1) - \sigma _k (\lambda |1)\sigma _{l - 1} (\lambda |1)}}
{{\sigma _l (\lambda )^2 }} \notag \\
\geq& (1 - \frac{l} {k}\frac{{n - k}} {{n - l}})\frac{{\sigma _{k - 1} (\lambda |1)\sigma _l (\lambda |1)}}
{{\sigma _l (\lambda )^2 }} \notag \\
\geq& \frac{n} {k}\frac{{k - l}} {{n - l}}\frac{{\sigma _{k - 1} (\lambda )\sigma _l (\lambda )}}
{{\sigma _l (\lambda )^2 }} \notag \\
\geq& \frac{n} {k}\frac{{k - l}}{{n - l}}\frac{1} {{n - k + 1}}\sum\limits_{i = 1}^n {\frac{{\partial [\frac{{\sigma _k (\lambda )}}
{{\sigma _l (\lambda )}}]}} {{\partial \lambda _i }}}.
\end{align}

\end{proof}

\begin{lemma} \label{lem2.6}
Suppose $A=\{ a_{ij}\}_{n \times n}$ satisfies
 \begin{align} \label{2.10}
a_{11} <0, \quad  \{ a_{ij}\}_{2 \leq i,j \leq n} \quad \text{ is diagonal},
 \end{align}
and $\lambda (A) \in \Gamma_k$ with $k \geq 1$. Then we have
\begin{align} \label{2.11}
\frac{{\partial [\frac{{\sigma _k (A )}}{{\sigma _l (A)}}]}}
{{\partial a_{11} }} \geqslant \frac{n}{k}\frac{{k - l}}{{n - l}}\frac{1}
{{n - k + 1}}\sum\limits_{i = 1}^n {\frac{{\partial [\frac{{\sigma _k (A )}}
{{\sigma _l (A)}}]}}{{\partial a_{ii} }}}, \quad \forall \quad 0 \leq l < k,
\end{align}
and
\begin{align} \label{2.12}
\sum\limits_{i = 1}^n {\frac{{\partial [\frac{{\sigma _k (A )}}
{{\sigma _l (A)}}]}}{{\partial a_{ii} }}}  \geq& \frac{{k - l}}{k} \frac{1}{C_n^l} (-a_{11})^{k-l-1}, \quad \forall \quad 0 \leq l < k.
\end{align}
\end{lemma}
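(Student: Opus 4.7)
The plan is to mimic the proof of Lemma \ref{lem2.5} while accounting for the arrowhead structure of $A$. A first step is to record the identity $\partial \sigma_m(A)/\partial a_{ii}=\sigma_{m-1}(A|i)$, valid for any symmetric matrix (expand $\sigma_m$ as a sum of principal $m\times m$ minors and use linearity in each diagonal entry), from which summation gives $\sum_i \sigma_{m-1}(A|i)=(n-m+1)\sigma_{m-1}(A)$. Since the bottom-right $(n-1)\times(n-1)$ block of $A$ is already diagonal, $A|1$ is diagonal with entries $\tilde\lambda=(a_{22},\ldots,a_{nn})$, and cofactor expansion along the first row and column produces the arrowhead identity
\begin{equation*}
\sigma_m(A)=\sigma_m(A|1)+a_{11}\sigma_{m-1}(A|1)-\sum_{j=2}^n a_{1j}^2\,\sigma_{m-2}(A|1j).
\end{equation*}
Moreover $\lambda(A)\in\Gamma_k$ implies, by Cauchy interlacing, that $\tilde\lambda$ lies in the Garding cone $\Gamma_{k-1}$ of $\mathbb{R}^{n-1}$, and hence each $\sigma_{m-2}(A|1j)\ge 0$ whenever $m\le k$; this also yields $\sigma_m(A|1)\ge\sigma_m(A)$ for $m\le k$, because $a_{11}<0$.

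For the proof of (\ref{2.11}), I would substitute the arrowhead identity into the numerator of $\partial[\sigma_k/\sigma_l]/\partial a_{11}$. The terms linear in $a_{11}$ cancel just as in the derivation of (\ref{2.9}), and one is left with
\begin{equation*}
\sigma_{k-1}(A|1)\sigma_l(A|1)-\sigma_k(A|1)\sigma_{l-1}(A|1)+E_k\sigma_{l-1}(A|1)-E_l\sigma_{k-1}(A|1),
\end{equation*}
where $E_m=\sum_j a_{1j}^2\sigma_{m-2}(A|1j)$. Proposition \ref{prop2.4} applied to $\tilde\lambda$ bounds the first difference below by $\frac{n(k-l)}{k(n-l)}\sigma_{k-1}(A|1)\sigma_l(A|1)$; together with $\sigma_m(A|1)\ge\sigma_m(A)$ (a consequence of (\ref{2.6}) in the arrowhead setting) and the identity for $\sum_i\partial[\sigma_k/\sigma_l]/\partial a_{ii}$, this reproduces the constant $\frac{n}{k}\frac{k-l}{n-l}\frac{1}{n-k+1}$ exactly as in the proof of Lemma \ref{lem2.5}. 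The residual $E_k\sigma_{l-1}(A|1)-E_l\sigma_{k-1}(A|1)$ I would rewrite, using the splitting $\sigma_m(\tilde\lambda)=\sigma_m(\tilde\lambda|j)+a_{jj}\sigma_{m-1}(\tilde\lambda|j)$, as $\sum_j a_{1j}^2\bigl[\sigma_{k-2}(\tilde\lambda|j)\sigma_{l-1}(\tilde\lambda|j)-\sigma_{k-1}(\tilde\lambda|j)\sigma_{l-2}(\tilde\lambda|j)\bigr]$, and handle it by a further application of Newton--MacLaurin on the sub-cone $\tilde\lambda|j$.

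For (\ref{2.12}), Newton--MacLaurin applied to $\lambda(A)$ yields $(n-l+1)\sigma_k\sigma_{l-1}\le\frac{l(n-k+1)}{k}\sigma_{k-1}\sigma_l$, whence
\begin{equation*}
\sum_i\frac{\partial[\sigma_k(A)/\sigma_l(A)]}{\partial a_{ii}}\ge\frac{(k-l)(n-k+1)}{k}\cdot\frac{\sigma_{k-1}(A)}{\sigma_l(A)}.
\end{equation*}
It then suffices to prove the lower bound $(n-k+1)\sigma_{k-1}(A)/\sigma_l(A)\ge(-a_{11})^{k-l-1}/C_n^l$. Negativity of $a_{11}$ enters crucially here: the Rayleigh bound $\mu_1(A)\le e_1^\top A\,e_1=a_{11}<0$ forces $A$ to have a genuinely negative eigenvalue, and repeated application of Proposition \ref{prop2.4} in a form adapted to a single negative eigenvalue should extract the required power $(-a_{11})^{k-l-1}$.

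The main obstacle I expect is the algebraic bookkeeping in (\ref{2.11}): checking that the error $E_k\sigma_{l-1}(A|1)-E_l\sigma_{k-1}(A|1)$ does not erode the sharp constant $\frac{n}{k}\frac{k-l}{n-l}\frac{1}{n-k+1}$ when $l\ge 1$, since then the bound $\sigma_{k-1}(\tilde\lambda|j)\sigma_{l-2}(\tilde\lambda|j)\le C\sigma_{k-2}(\tilde\lambda|j)\sigma_{l-1}(\tilde\lambda|j)$ required on the further restricted cone must be carefully matched with the other pieces of the argument.
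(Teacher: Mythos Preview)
For \eqref{2.11} the paper does not give an argument at all: it simply refers to Lemma~3.9 of \cite{C15}. Your direct arrowhead computation is therefore more ambitious than what the paper attempts, and your own diagnosis of the obstacle is accurate. After your cancellation the residual term is $\sum_{j\ge 2} a_{1j}^2\bigl[\sigma_{k-2}(\tilde\lambda|j)\sigma_{l-1}(\tilde\lambda|j)-\sigma_{k-1}(\tilde\lambda|j)\sigma_{l-2}(\tilde\lambda|j)\bigr]$, and the sign of each bracket is not controlled by Newton--MacLaurin on $\tilde\lambda|j$, because interlacing only places $\tilde\lambda|j$ in $\Gamma_{k-2}\subset\mathbb{R}^{n-2}$, where $\sigma_{k-1}(\tilde\lambda|j)$ may have either sign. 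So this route, as stated, does not close; one really needs the device of \cite{C15} (or an equivalent eigenvalue-based argument) rather than a purely algebraic manipulation of the arrowhead minors.

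For \eqref{2.12} your first step coincides with the paper's: both arrive at $\sum_i \partial[\sigma_k/\sigma_l]/\partial a_{ii}\ge \frac{(k-l)(n-k+1)}{k}\,\sigma_{k-1}(\lambda)/\sigma_l(\lambda)$. The divergence is in how to extract $(-a_{11})^{k-l-1}$ from $\sigma_{k-1}/\sigma_l$. Your proposal to do this by ``repeated application of Proposition~\ref{prop2.4}'' is the wrong tool: Newton--MacLaurin compares \emph{ratios} of $\sigma_m$'s and does not by itself produce an absolute lower bound in terms of a single eigenvalue. The paper instead orders the eigenvalues $\lambda_1\ge\cdots\ge\lambda_n$, uses $\lambda_n\le a_{11}<0$, bounds $\sigma_l(\lambda)\le C_n^l\,\lambda_1\cdots\lambda_l$ from \eqref{2.3}, and bounds $\sigma_{k-1}(\lambda)$ below by $\lambda_1\cdots\lambda_l\,\sigma_{k-l-1}(\lambda|1,\ldots,l)$; the factor $\lambda_1\cdots\lambda_l$ cancels and the identity $\sum_j\sigma_{k-l-1}(\lambda|1,\ldots,l,j)=(n-k+1)\sigma_{k-l-1}(\lambda|1,\ldots,l)$ absorbs the $(n-k+1)$, after which one isolates the single summand with $j=n$ and bounds $\sigma_{k-l-1}(\lambda|1,\ldots,l,n)\ge(-\lambda_n)^{k-l-1}\ge(-a_{11})^{k-l-1}$. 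The missing idea in your plan is precisely this factorization via \eqref{2.3} and the peeling-off of the index $n$; without it the power of $-a_{11}$ does not emerge.
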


\begin{proof}
See Lemma 3.9 in \cite{C15} for the proof of \eqref{2.11}.

To prove \eqref{2.12}, we assume that $\lambda = (\lambda_1, \lambda_2, \cdots, \lambda_n)$ are the eigenvalues of $A$, and $\lambda_1 \geq \lambda_2 \geq  \cdots \geq \lambda_n$. It is easy to know that $\lambda_n \leq a_{11} <0$. Direct calculation yields
\begin{align} \label{2.13}
\sum\limits_{i = 1}^n {\frac{{\partial [\frac{{\sigma _k (A )}} {{\sigma _l (A)}}]}}{{\partial a_{ii} }}} =& \sum\limits_{i = 1}^n {\frac{{\partial [\frac{{\sigma _k (\lambda )}} {{\sigma _l (\lambda)}}]}}{{\partial \lambda_{i} }}} \ge \frac{{k - l}}{k}(n - k + 1) \frac{{\sigma _{k - 1} (\lambda )}}{{\sigma _l (\lambda )}} \notag \\
\geq& \frac{{k - l}}{k}(n - k + 1) \frac{{ \lambda_1 \cdots \lambda_l \sigma _{k -l- 1} (\lambda|1, \cdots, l )}}{{C_n^l \lambda_1 \cdots \lambda_l}} \notag \\
=& \frac{{k - l}}{k}(n - k + 1) \frac{1}{C_n^l}  \sigma _{k -l- 1} (\lambda|1, \cdots, l )\notag \\
=& \frac{{k - l}}{k} \frac{1}{C_n^l}  \Big[\sum_{j=l+1}^{n-1} \sigma _{k -l- 1} (\lambda|1, \cdots, l, j)+ \sigma _{k -l- 1} (\lambda|1, \cdots, l, n)\Big]\notag \\
\geq& \frac{{k - l}}{k} \frac{1}{C_n^l}  \sigma _{k -l- 1} (\lambda|1, \cdots, l, n)\notag \\
\geq& \frac{{k - l}}{k} \frac{1}{C_n^l} (-\lambda_n )^{k-l-1}\geq \frac{{k - l}}{k} \frac{1}{C_n^l} (-a_{11})^{k-l-1}.
\end{align}
Hence \eqref{2.12} holds.

\end{proof}

\begin{lemma} \label{lem2.7}
Suppose $\lambda = (\lambda_1, \lambda_2, \cdots, \lambda_n) \in \Gamma_k$, $k \geq 2$, and $ \lambda_2 \geq \cdots \geq \lambda_n$. If $\lambda_1 > 0$, $\lambda_n < 0$, $\lambda_1 \geq \delta \lambda_2$, and $- \lambda_n \geq \varepsilon \lambda_1$ for small positive constants $\delta$ and $\varepsilon$,  then we have
\begin{align} \label{2.14}
\sigma_{m} (\lambda |1)  \geq  c_0 \sigma_{m} (\lambda), \quad \forall \quad m =0, 1, \cdots, k-1,
\end{align}
where $c_0 = \min \{ \frac{{\varepsilon ^2 \delta ^2}}{{2(n - 2)(n - 1)}},\frac{{\varepsilon ^2 \delta }}{{4(n- 1)}}\}$. Moreover, we have
\begin{align}\label{2.15}
\frac{{\partial [\frac{{\sigma _k (\lambda )}}{{\sigma _l (\lambda )}}]}}
{{\partial \lambda _1 }} \geqslant c_1\sum\limits_{i = 1}^n {\frac{{\partial [\frac{{\sigma _k (\lambda )}}
{{\sigma _l (\lambda )}}]}}{{\partial \lambda _i }}}, \quad \forall \quad 0 \leq l < k,
\end{align}
where $c_1 = \frac{n} {k}\frac{{k - l}}{{n - l}}\frac{c_0^2} {{n - k + 1}}$.
\end{lemma}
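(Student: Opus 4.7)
The lemma has two parts, \eqref{2.14} and \eqref{2.15}; I would establish \eqref{2.14} first, since \eqref{2.15} will then follow by a direct adaptation of the proof of Lemma \ref{lem2.5}, with $c_0$ supplying the loss that comes from $\lambda_1$ now being positive rather than negative as in Lemma \ref{lem2.5}.

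For \eqref{2.14}, the recursion $\sigma_m(\lambda)=\sigma_m(\lambda|1)+\lambda_1\sigma_{m-1}(\lambda|1)$ reduces the desired bound $\sigma_m(\lambda|1)\geq c_0\sigma_m(\lambda)$ to $(1-c_0)\sigma_m(\lambda|1)\geq c_0\lambda_1\sigma_{m-1}(\lambda|1)$. Expanding a second time along the index $n$, I get
\[
\sigma_m(\lambda)=\sigma_m(\lambda|1,n)+(\lambda_1+\lambda_n)\sigma_{m-1}(\lambda|1,n)+\lambda_1\lambda_n\sigma_{m-2}(\lambda|1,n),
\]
and the three structural hypotheses come into play: $\lambda_1>0$, $\lambda_n<0$ with $|\lambda_n|\geq\varepsilon\lambda_1$, and $\lambda_2\leq\lambda_1/\delta$ (from $\lambda_1\geq\delta\lambda_2$ together with $\lambda_2\geq\cdots\geq\lambda_n$), so that $\lambda_2$ is the dominant positive entry of $\lambda|1$. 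A case split according to which of $\sigma_m(\lambda|1,n)$ and $-\lambda_n\sigma_{m-1}(\lambda|1,n)$ controls $\sigma_m(\lambda|1)$ should then produce two competing lower bounds that match the two entries of the $\min$ defining $c_0$, with the second (shallower) factor $\delta$ arising from the regime where the positive entry $\lambda_2$ carries the estimate and the first factor $\delta^2$ from the regime where both endpoints must be paired. An induction on $m$ (with $m=0$ trivial) closes the argument for every $m\leq k-1$.

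For \eqref{2.15}, once \eqref{2.14} is in hand I would mirror the calculation \eqref{2.9}. The cancellation identity
\[
\sigma_l(\lambda)^2\,\frac{\partial}{\partial\lambda_1}\left(\frac{\sigma_k(\lambda)}{\sigma_l(\lambda)}\right)=\sigma_{k-1}(\lambda|1)\sigma_l(\lambda|1)-\sigma_k(\lambda|1)\sigma_{l-1}(\lambda|1)
\]
splits into two subcases. If $\sigma_k(\lambda|1)\leq 0$ the subtracted term already helps and the right-hand side exceeds $\sigma_{k-1}(\lambda|1)\sigma_l(\lambda|1)$; otherwise \eqref{2.14} forces $\lambda|1\in\Gamma_k$, and Proposition \ref{prop2.4} applied to $\lambda|1$ yields $\sigma_k(\lambda|1)\sigma_{l-1}(\lambda|1)\leq\frac{l(n-k)}{k(n-l)}\sigma_{k-1}(\lambda|1)\sigma_l(\lambda|1)$. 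In either subcase the right-hand side is at least $\frac{n(k-l)}{k(n-l)}\sigma_{k-1}(\lambda|1)\sigma_l(\lambda|1)$. Applying \eqref{2.14} with $m=k-1$ and $m=l$ replaces this by $c_0^2\sigma_{k-1}(\lambda)\sigma_l(\lambda)$, and pairing with the universal upper bound $\sum_i\partial_{\lambda_i}[\sigma_k/\sigma_l]\leq(n-k+1)\sigma_{k-1}(\lambda)/\sigma_l(\lambda)$ from \eqref{2.8} delivers exactly the claimed constant $c_1=\frac{n(k-l)c_0^2}{k(n-l)(n-k+1)}$.

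The main obstacle is the case analysis inside \eqref{2.14}: the $\min$ in $c_0$ with two algebraically distinct expressions signals that two genuinely different regimes occur, and tracking signs through the recursion above -- remembering that $\lambda_1\lambda_n<0$, so one of the three summands always works against us -- is the delicate bookkeeping on which the rest of the argument rests.
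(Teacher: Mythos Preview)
Your strategy matches the paper's almost exactly. For \eqref{2.15} your argument is in fact slightly more careful than the paper's: the paper simply writes
\[
\sigma_{k-1}(\lambda|1)\sigma_l(\lambda|1)-\sigma_k(\lambda|1)\sigma_{l-1}(\lambda|1)\geq\Bigl(1-\tfrac{l}{k}\tfrac{n-k}{n-l}\Bigr)\sigma_{k-1}(\lambda|1)\sigma_l(\lambda|1)
\]
and invokes Newton--MacLaurin without isolating the case $\sigma_k(\lambda|1)\leq 0$, whereas your sign dichotomy makes explicit why the inequality is legitimate. After that, two applications of \eqref{2.14} (at $m=k-1$ and $m=l$) and the upper bound \eqref{2.8} give exactly $c_1$, as you say.

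For \eqref{2.14} the paper also splits according to the size of $\sigma_m(\lambda|1)$ relative to $(-\lambda_n)\sigma_{m-1}(\lambda|1n)$, with threshold $\theta=\frac{\varepsilon\delta}{2(n-2)}$, so your case-split instinct is correct. But the paper does \emph{not} use induction on $m$, and the hard case is not handled through the three--term expansion you wrote down. Instead, in Case~2 (where $\sigma_m(\lambda|1)<\theta(-\lambda_n)\sigma_{m-1}(\lambda|1n)$) the paper steps \emph{up} one level: it expands $(m+1)\sigma_{m+1}(\lambda|1)=\sum_{i\geq 2}\lambda_i\sigma_m(\lambda|1)-\sum_{i\geq 2}\lambda_i^2\sigma_{m-1}(\lambda|1i)$, keeps only the $i=n$ term in the second sum, and uses the Case~2 hypothesis together with $\lambda_2\leq\lambda_1/\delta$ and $|\lambda_n|\geq\varepsilon\lambda_1$ to force $\sigma_{m+1}(\lambda|1)$ to be \emph{strictly negative}, of size comparable to $\lambda_1\sigma_m(\lambda)$. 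Then the identity
\[
\sigma_m(\lambda|1)=\frac{\sigma_{m+1}(\lambda)-\sigma_{m+1}(\lambda|1)}{\lambda_1}>\frac{-\sigma_{m+1}(\lambda|1)}{\lambda_1}
\]
(using $\sigma_{m+1}(\lambda)>0$ since $m\leq k-1$) converts that negativity into the lower bound $\frac{\varepsilon^2\delta}{4(n-1)}\sigma_m(\lambda)$. This step-up trick is the source of the second branch of $c_0$; your sketch, with its downward recursion and mention of induction, does not yet capture it, and it is the one genuinely delicate piece of bookkeeping in the proof.
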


\begin{proof}
The idea of proof of \eqref{2.14} is from \cite{MQ15}, and we produce the proof here.

For $m=0$, \eqref{2.14} holds directly. In the following, we assume $1 \leq m \leq k-1$.

Firstly,  if $\lambda _1  \geqslant \lambda _2$, we have from \eqref{2.4}
\begin{align}\label{2.16}
\lambda _1 \sigma _{m - 1} (\lambda |1n) \geq \frac{m} {{n - 1}}\sigma _m (\lambda |n).
\end{align}
If $\lambda _1  < \lambda _2$, we have
\begin{align}\label{2.17}
\lambda _1 \sigma _{m - 1} (\lambda |1n) \geq& \lambda _1 \sigma _{m - 1} (\lambda |2n) \geqslant \delta \lambda _2 \sigma _{m - 1} (\lambda |2n) \notag \\
\geq& \delta \frac{m}{{n - 1}}\sigma _m (\lambda |n).
\end{align}
Hence from \eqref{2.16} and \eqref{2.17}, it holds
\begin{align}\label{2.18}
( - \lambda _n )\sigma _{m - 1} (\lambda |1n) \geq& \varepsilon \lambda _1 \sigma _{m - 1} (\lambda |1n) \notag \\
\geq& \varepsilon \delta \frac{m} {{n - 1}}\sigma _m (\lambda |n) \notag \\
\geq& \varepsilon \delta \frac{m} {{n - 1}}\sigma _m (\lambda ).
\end{align}

Let
\begin{align}\label{2.19}
  \theta  = \frac{{\varepsilon \delta }} {{2(n - 2)}}.
\end{align}
We divide into two cases to prove \eqref{2.14}.

$\blacklozenge$ Case 1: $\sigma _m (\lambda |1) \geq \theta ( - \lambda _n )\sigma _{m-1} (\lambda |1n)$.

In this case, we can get directly from \eqref{2.18}
\begin{align}\label{2.20}
\sigma _m (\lambda |1) \geq& \theta ( - \lambda _n )\sigma _{m-1} (\lambda |1n) \notag \\
\geq& \theta \varepsilon \delta \frac{m} {{n - 1}}\sigma _m (\lambda ) \notag \\
=& \frac{{\varepsilon ^2 \delta ^2 m}} {{2(n - 2)(n - 1)}}\sigma _m (\lambda ) \geq \frac{{\varepsilon ^2 \delta ^2 }} {{2(n - 2)(n - 1)}}\sigma _m (\lambda ).
\end{align}

$\blacklozenge$ Case 2: $\sigma _m (\lambda |1) < \theta ( - \lambda _n )\sigma _{m-1} (\lambda |1n)$.

In this case, we have
\begin{align}\label{2.21}
(m + 1)\sigma _{m + 1} (\lambda |1) =& \sum_{ i= 2}^{n}{\lambda_i}\sigma_{m}(\lambda|1i)=
\sum_{ i= 2}^{n}{\lambda_i} [\sigma_{m}(\lambda|1)-\lambda_i \sigma_{m-1}(\lambda|1i)] \notag \\
=& \sum_{ i= 2}^{n}{\lambda_i}\sigma_{m}(\lambda|1)-\sum_{ i= 2}^{n}{\lambda_i^2\sigma_{m-1}(\lambda|1i)} \notag \\
\le& \sum_{ i= 2}^{n}{\lambda_i}\sigma_{m}(\lambda|1)-\lambda_n^2\sigma_{m-1}(\lambda|1n)  \notag  \\
\leq& (n - 2)\lambda _2 \sigma _m (\lambda |1) - \lambda _n ^2 \sigma _{m - 1} (\lambda |1n) \notag \\
<& \frac{{(n - 2)}} {\delta }\lambda _1 \theta ( - \lambda _n )\sigma _{m-1} (\lambda |1n) + \varepsilon \lambda _n \lambda _1 \sigma _{m - 1} (\lambda |1n) \notag \\
=&  - \frac{\varepsilon } {2}\lambda _1 ( - \lambda _n )\sigma _{m-1} (\lambda |1n).
\end{align}
From \eqref{2.18}, we can get
\begin{align}\label{2.22}
(m+ 1)\sigma _{m + 1} (\lambda |1) <  - \frac{{\varepsilon ^2 \delta }} {2}\frac{m}
{{n - 1}}\lambda _1 \sigma _m (\lambda ),
\end{align}
then
\begin{align}\label{2.23}
\sigma _m (\lambda |1) =& \frac{{\sigma _{m + 1} (\lambda ) - \sigma _{m + 1} (\lambda |1)}}
{{\lambda _1 }} > \frac{{ - \sigma _{m + 1} (\lambda |1)}} {{\lambda _1 }} \notag \\
>& \frac{{\varepsilon ^2 \delta }} {{2(m + 1)}}\frac{m} {{n - 1}}\sigma _m (\lambda ) \geq \frac{{\varepsilon ^2 \delta }} {{4(n- 1)}}\sigma _m (\lambda ).
\end{align}
Hence \eqref{2.14} holds.

From \eqref{2.9} and the generalized Newton-MacLaurin inequality, we can get
\begin{align}\label{2.24}
  \frac{{\partial [\frac{{\sigma _k (\lambda )}} {{\sigma _l (\lambda )}}]}}
{{\partial \lambda _1 }} =& \frac{{\sigma _{k - 1} (\lambda |1)\sigma _l (\lambda ) - \sigma _k (\lambda )\sigma _{l - 1} (\lambda |1)}}
{{\sigma _l (\lambda )^2 }} \notag \\
=& \frac{{\sigma _{k - 1} (\lambda |1)\sigma _l (\lambda |1) - \sigma _k (\lambda |1)\sigma _{l - 1} (\lambda |1)}}
{{\sigma _l (\lambda )^2 }} \notag \\
\geq& (1 - \frac{l} {k}\frac{{n - k}} {{n - l}})\frac{{\sigma _{k - 1} (\lambda |1)\sigma _l (\lambda |1)}}
{{\sigma _l (\lambda )^2 }} \notag \\
\geq& \frac{n} {k}\frac{{k - l}} {{n - l}} c_0^2 \frac{{\sigma _{k - 1} (\lambda )\sigma _l (\lambda )}}
{{\sigma _l (\lambda )^2 }} \notag \\
\geq& \frac{n} {k}\frac{{k - l}}{{n - l}}\frac{c_0^2} {{n - k + 1}}\sum\limits_{i = 1}^n {\frac{{\partial [\frac{{\sigma _k (\lambda )}}
{{\sigma _l (\lambda )}}]}} {{\partial \lambda _i }}}.
\end{align}

\end{proof}

\begin{remark}
These lemmas play an important role in the establishment of a priori estimates. Precisely, Lemma \ref{lem2.6} is the key of the gradient estimates in Section 4, including the interior gradient estimate and the near boundary gradient estimate. Lemmas \ref{lem2.5} and Lemma \ref{lem2.7} are the keys of the lower and upper estimates of double normal second order derivatives on the boundary in Section 5, respectively.
\end{remark}

\section{$C^0$ estimate}

The $C^0$ estimate is easy. For completeness, we produce a proof here following the idea of Lions-Trudinger-Urbas \cite{LTU86} and Ma-Qiu \cite{MQ15}.
\begin{theorem} \label{th3.1}
Suppose $\Omega \subset \mathbb{R}^n$ is a $C^1$ bounded domain, $f \in C^0(\overline{\Omega})$ is a positive function, $\varphi \in C^0(\partial \Omega)$ and $u \in C^2(\Omega)\cap C^1(\overline \Omega)$ is the $k$-admissible solution of Hessian quotient equation \eqref{1.2}, then we have
\begin{align}\label{3.1}
\sup_\Omega |u|  \leq M_0,
\end{align}
where $M_0$ depends on $n$, $k$, $l$, $diam(\Omega)$, $\max\limits_{\partial \Omega} |\varphi|$ and $\sup\limits_\Omega f$.
\end{theorem}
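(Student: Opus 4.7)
The plan is to bound $u$ above and below separately, each time by locating an extremum of $u$ and feeding it into the Neumann boundary condition. For the upper bound, $k$-admissibility with $k\ge 1$ forces $\Delta u=\sigma_1(D^2u)>0$, so $u$ is strictly subharmonic and attains its maximum on $\partial\Omega$ at some $y_0$; at $y_0$ the outer normal derivative is nonnegative, so the boundary condition $u_\nu=-u+\varphi$ immediately yields $u(y_0)\le\varphi(y_0)\le\max_{\partial\Omega}|\varphi|$, with no reliance on $f$.

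The lower bound is the substantive half, because the minimum of $u$ might lie in the interior. My plan is to build a quadratic barrier $\phi(x)=\tfrac{\alpha}{2}|x-x_*|^2+c$ for a fixed $x_*\in\Omega$, with $\alpha>0$ chosen so that $\sigma_k(D^2\phi)/\sigma_l(D^2\phi)=\alpha^{k-l}C_n^k/C_n^l\ge \sup_\Omega f$ (hence $\alpha$ is controlled by $n,k,l,\sup f$), and with $c$ chosen negative enough to force the Neumann-type inequality $\phi_\nu+\phi\le\varphi$ on $\partial\Omega$. Using $\phi_\nu=\alpha(x-x_*)\cdot\nu\le\alpha\,\mathrm{diam}(\Omega)$ and $\phi\le\tfrac{\alpha}{2}\mathrm{diam}(\Omega)^2+c$ on $\partial\Omega$, it is enough to take $c=-\max|\varphi|-\alpha\,\mathrm{diam}(\Omega)-\tfrac{\alpha}{2}\mathrm{diam}(\Omega)^2$, so both parameters depend only on the quantities listed in $M_0$.

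The main obstacle, and the step that needs care, is running the comparison principle in the Neumann setting to conclude $u\ge\phi$. Set $w=u-\phi$. Since the set of symmetric matrices with eigenvalues in $\Gamma_k$ is convex, and $D^2\phi=\alpha I\in\Gamma_n\subset\Gamma_k$, the ellipticity of $\sigma_k/\sigma_l$ on $\Gamma_k$ lets me linearize along the segment from $D^2\phi$ to $D^2u$ and write $a^{ij}w_{ij}=(\sigma_k/\sigma_l)(D^2u)-(\sigma_k/\sigma_l)(D^2\phi)\le f-\sup f\le 0$ with $(a^{ij})$ pointwise positive definite; the weak maximum principle for this linear elliptic inequality therefore places the minimum of $w$ on $\partial\Omega$. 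If that minimum were strictly negative, say at $y_0\in\partial\Omega$, then $w_\nu(y_0)\le 0$ by boundary extremality, but subtracting the two boundary relations gives $w_\nu+w=(u_\nu+u)-(\phi_\nu+\phi)\ge\varphi-\varphi=0$, so $w_\nu(y_0)\ge -w(y_0)>0$, a contradiction. Hence $u\ge\phi\ge c$ in $\Omega$, which delivers $\inf_\Omega u\ge c$ with the claimed dependence of $M_0$ on $n,k,l,\mathrm{diam}(\Omega),\max|\varphi|,\sup f$.
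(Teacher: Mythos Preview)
Your proof is correct and follows essentially the same strategy as the paper's: the upper bound via subharmonicity and the boundary condition is identical, and for the lower bound both arguments compare $u$ with a quadratic centered at an interior point whose Hessian-quotient value dominates $\sup f$, then exploit the boundary condition at the resulting boundary extremum. The only cosmetic difference is that you fold the constant $c$ into the barrier from the start and argue $u\ge\phi$ by contradiction, whereas the paper omits the constant, simply invokes the comparison principle to place the minimum of $u-A|x-x_1|^2$ on $\partial\Omega$, and then reads off the bound on $u(x_2)$ directly from $0\ge(u-A|x-x_1|^2)_\nu$; the content is the same.
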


\begin{proof}
Firstly, since $u$ is subharmonic, the maximum of $u$ is attained at some boundary point $x_0 \in \partial \Omega$. Then we can get
\begin{align}\label{3.2}
0 \leq u_\nu (x_0) =-u(x_0) + \varphi (x_0).
\end{align}
Hence
\begin{align}\label{3.3}
\max_\Omega u = u(x_0) \leq \varphi (x_0) \leq \max_{\partial \Omega} |\varphi|.
\end{align}

For a fixed point $x_1 \in \Omega$,  and a constant $A = \frac{1}{2} [\frac{C_n^l}{C_n^k} \sup_\Omega f]^{\frac{1}{k-l}}$, we have
\begin{align} \label{3.4}
\frac{\sigma _k (D^2 u)}{\sigma _l (D^2 u)} = f(x) \leq \sup_\Omega f = \frac{\sigma _k (D^2 (A|x-x_1|^2))}{\sigma _l (D^2(A|x-x_1|^2))}.
\end{align}
By the comparison principle, we know $u - A|x-x_1|^2$ attains its minimum at some boundary point $x_2 \in \partial \Omega$. Then
\begin{align}\label{3.5}
0 \geq& (u - A|x-x_1|^2)_\nu (x_2) = u_\nu (x_2) -2A (x_2-x_1)\cdot \nu  \notag \\
=&-u(x_2) + \varphi (x_2)-2A (x_2-x_1)\cdot \nu  \notag \\
\geq& -u(x_2) - \max_{\partial \Omega} |\varphi| - 2A \text{diam}(\Omega).
\end{align}
Hence
\begin{align}\label{3.6}
\min_\Omega u \geq \min_\Omega (u - A|x-x_1|^2) =& u (x_2) - A|x_2-x_1|^2 \notag \\
\geq& - \max_{\partial \Omega} |\varphi| - 2A \text{diam}(\Omega)- A \text{diam}(\Omega)^2.
\end{align}
\end{proof}

Here, following the proof of Theorem \ref{th3.1}, we can easily obtain
\begin{theorem} \label{th3.2}
Suppose $\Omega \subset \mathbb{R}^n$ is a $C^1$ bounded domain, $f \in C^0(\overline{\Omega})$ is a positive function, $\varphi \in C^0(\partial \Omega)$ and $u^\varepsilon \in C^2(\Omega)\cap C^1(\overline \Omega)$ is the $k$-admissible solution of the Hessian quotient equation \eqref{1.4} with $\varepsilon \in (0,1)$, then we have
\begin{align}\label{3.7}
\sup_\Omega |\varepsilon u^\varepsilon|  \leq \overline{M_0},
\end{align}
where $\overline{M_0}$ depends on $n$, $k$, $l$, $ \text{diam} (\Omega)$, $\max\limits_{\partial \Omega} |\varphi|$ and $\sup\limits_\Omega f$.
\end{theorem}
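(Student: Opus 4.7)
The proof will be a direct adaptation of the argument for Theorem \ref{th3.1}, tracking carefully where the boundary condition contributes a factor of $\varepsilon$. The only thing that changes is that the terms $u^\varepsilon$ appearing via the boundary condition always carry an $\varepsilon$ prefactor, so the bounds come out naturally for $\varepsilon u^\varepsilon$ rather than $u^\varepsilon$ itself.

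For the upper bound, since $u^\varepsilon$ is $k$-admissible it is in particular subharmonic, so the maximum of $u^\varepsilon$ over $\overline\Omega$ is attained at a boundary point $x_0\in\partial\Omega$. At such a point $(u^\varepsilon)_\nu(x_0)\geq 0$, and the boundary condition in \eqref{1.4} gives
\begin{equation*}
\varepsilon u^\varepsilon(x_0) \leq \varphi(x_0) \leq \max_{\partial\Omega}|\varphi|.
\end{equation*}
Since $\max_{\overline\Omega}u^\varepsilon = u^\varepsilon(x_0)$, this yields $\varepsilon\max_{\overline\Omega}u^\varepsilon \leq \max_{\partial\Omega}|\varphi|$ when this maximum is nonnegative, and the bound is trivial otherwise.

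For the lower bound, I mimic the barrier construction in Theorem \ref{th3.1}. Fix $x_1\in\Omega$ and let $A=\tfrac{1}{2}\bigl[\tfrac{C_n^l}{C_n^k}\sup_\Omega f\bigr]^{1/(k-l)}$, so that the quadratic $A|x-x_1|^2$ satisfies $\sigma_k/\sigma_l > f$. By the comparison principle for the Hessian quotient operator, the function $u^\varepsilon - A|x-x_1|^2$ attains its minimum at some $x_2\in\partial\Omega$, where its outer normal derivative is $\leq 0$. Combined with the boundary condition this gives
\begin{equation*}
-\varepsilon u^\varepsilon(x_2) + \varphi(x_2) = (u^\varepsilon)_\nu(x_2) \leq 2A(x_2-x_1)\cdot\nu \leq 2A\,\mathrm{diam}(\Omega),
\end{equation*}
hence $\varepsilon u^\varepsilon(x_2) \geq -\max_{\partial\Omega}|\varphi| - 2A\,\mathrm{diam}(\Omega)$. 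Multiplying the inequality $u^\varepsilon(x) \geq u^\varepsilon(x_2) - A|x_2-x_1|^2 + A|x-x_1|^2$ by $\varepsilon$, using $\varepsilon\in(0,1)$ to absorb the extra $\varepsilon A|x_2-x_1|^2 \leq A\,\mathrm{diam}(\Omega)^2$ term, produces
\begin{equation*}
\min_{\overline\Omega}\varepsilon u^\varepsilon \geq -\max_{\partial\Omega}|\varphi| - 2A\,\mathrm{diam}(\Omega) - A\,\mathrm{diam}(\Omega)^2,
\end{equation*}
which is $\varepsilon$-independent.

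There is no genuine obstacle here: this is essentially a bookkeeping exercise, and the only point requiring care is to ensure that the estimate for the lower bound is truly independent of $\varepsilon$. That is where one uses $\varepsilon\in(0,1)$ to control the single place ($\varepsilon A|x_2-x_1|^2$) where an uncontrolled $u^\varepsilon$ (without an $\varepsilon$ factor) would otherwise appear. Combining the two bounds gives \eqref{3.7} with $\overline{M_0}$ depending only on the stated quantities.
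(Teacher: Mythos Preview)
Your proof is correct and follows exactly the route the paper indicates: it simply says ``following the proof of Theorem~\ref{th3.1}, we can easily obtain'' Theorem~\ref{th3.2}, and your adaptation is precisely that. The one place where you use $\varepsilon\in(0,1)$ to absorb $\varepsilon A|x_2-x_1|^2$ is the only point requiring any care, and you handle it correctly.
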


\section{Global gradient estimate}

In this section, we prove the global gradient estimate, involving the interior gradient estimate and the near boundary gradient estimate.  To state our theorems, we denote $d(x) = \textrm{dist}(x, \partial \Omega)$, and $\Omega_{\mu} = \{ x \in \Omega| d(x) < \mu \}$ where $\mu$ is a small positive universal constant depending only on $\Omega$. In Subsection 4.1, we give the interior gradient estimate in $\Omega \setminus \Omega_{\mu}$, and in Subsection 4.2 we establish the near boundary gradient estimate in $\Omega_{\mu}$, following the idea of Ma-Qiu-Xu \cite{MQX16} and Ma-Qiu \cite{MQ15}.

\subsection{Interior gradient estimate}

The interior gradient estimate is established in \cite{C15} as follows
\begin{theorem} \label{th4.1}
Suppose $u \in C^3 (B_r(0))$ is a $k$-admissible solution to the Hessian quotient equation
\begin{align}\label{4.1}
\frac{\sigma _k (D^2 u)}{\sigma _l (D^2 u)} = f(x, u, D u),  \quad x \in B_r(0) \subset \mathbb{R}^n,
\end{align}
with $f(x, u, D u) >0$ in $ B_r(0)$ and $f(x, u, p) \in C^{1}( B_r(0) \times \mathbb{R} \times  \mathbb{R}^n)$. Then
\begin{align}\label{4.2}
|D u (0)|  \leq C \Big( \frac{\mathop {osc}\limits_{B_r (0)} u }{r}+ [\mathop {osc}\limits_{B_r (0)} u]^{\frac{k-l+1}{2(k-l)}}  + [\mathop {osc}\limits_{B_r (0)} u]^{\frac{k-l}{2(k-l)+1}}   \Big),
\end{align}
where $C$ is a positive constant depending only on $n$, $k$, $l$ and $|D_x f|_{C^0}$, $|D_u f|_{C^0}$, $|D_p f|_{C^0}$.
\end{theorem}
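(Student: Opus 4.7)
The plan is a maximum-principle estimate for the auxiliary function
\[
\Phi(x) = \rho(x)^{\beta}\,e^{\alpha (u(x)-\inf u)}\,|Du(x)|^{2},
\]
where $\rho(x) = r^{2}-|x|^{2}$ vanishes on $\partial B_{r}(0)$ and the positive constants $\alpha,\beta$ are to be fixed at the end. Since $\Phi \equiv 0$ on $\partial B_{r}(0)$, the maximum is attained at some interior point $x_{0}$. I would rotate so that $D^{2}u(x_{0})$ is diagonal with eigenvalues $\lambda_{1}\geq \cdots \geq \lambda_{n}$, and write $F^{ii} := \partial(\sigma_{k}/\sigma_{l})/\partial u_{ii}$ evaluated at $x_{0}$. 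The two inputs are then $\nabla\log\Phi(x_{0})=0$ and $F^{ii}(\log\Phi)_{ii}(x_{0})\leq 0$.

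Expanding the second-order inequality, eliminating third derivatives via the differentiated equation $F^{ii}u_{iij} = f_{x_{j}}+f_{u}u_{j}+f_{p_{m}}u_{mj}$, and using the structural identity $\sum_{i}F^{ii}u_{ii}=(k-l)f$, one arrives at a pointwise inequality at $x_{0}$ of the schematic form
\[
\frac{2}{|Du|^{2}}\sum_{i} F^{ii}\lambda_{i}^{2} + \alpha \sum_{i} F^{ii}u_{i}^{2} \;\leq\; \frac{\beta}{\rho^{2}}\sum_{i} F^{ii}\rho_{i}^{2} - \frac{\beta}{\rho}\sum_{i} F^{ii}\rho_{ii} + C\!\left(\sum_{i}F^{ii}+1\right)\bigl(1+|Du|^{2}\bigr),
\]
with $C$ depending on $n,k,l$ and the $C^{1}$-norm of $f$. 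The principal obstacle is the possible degeneracy in the direction of the most negative eigenvalue $\lambda_{n}$: for a generic $k$-admissible solution, $\lambda_{n}$ may be large and negative, so the ``good'' term on the left fails to dominate on its own.

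I would resolve this by a dichotomy on $-\lambda_{n}$. In the regime $-\lambda_{n}\leq\varepsilon_{0}|Du|$, the positive eigenvalues dominate; the generalized Newton--MacLaurin inequality (Proposition \ref{prop2.4}) together with the lower bound $\lambda_{1}\gtrsim |Du|/\rho$ coming from $\nabla\Phi(x_{0})=0$ allows the term $F^{11}\lambda_{1}^{2}/|Du|^{2}$ to absorb the cutoff contribution. In the complementary regime $-\lambda_{n}\geq\varepsilon_{0}|Du|$, the decisive input is Lemma \ref{lem2.6}, inequality \eqref{2.12}, which yields
\[
\sum_{i} F^{ii} \;\geq\; \frac{k-l}{k\,C_{n}^{l}}(-\lambda_{n})^{k-l-1} \;\geq\; c\,|Du|^{k-l-1},
\]
so that the trace of $F^{ii}$ itself provides enough ellipticity to absorb the bad terms after Cauchy--Schwarz. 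Balancing each regime against the cutoff term $\beta|\nabla\rho|^{2}/\rho^{2}$ produces, respectively, the exponents $\frac{k-l+1}{2(k-l)}$ and $\frac{k-l}{2(k-l)+1}$ appearing in \eqref{4.2}; the remaining term $\mathop{osc}\limits_{B_r(0)} u /r$ is the standard linear contribution from the cutoff after evaluating at $x=0$, where $\rho(0)=r^{2}$. Fixing $\alpha,\beta,\varepsilon_{0}$ in terms of $n,k,l,\|f\|_{C^{1}}$ and $\mathop{osc}\limits_{B_r(0)} u$ at the end completes the estimate.
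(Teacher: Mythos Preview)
The paper does not prove Theorem~\ref{th4.1}; it is simply quoted from \cite{C15}. The argument to compare with is the paper's own proof of the near-boundary gradient estimate, Theorem~\ref{th4.3}, which follows the method of \cite{C15} and rests on the same Lemma~\ref{lem2.6}.

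Your choice of frame creates a genuine gap. You diagonalise $D^{2}u(x_{0})$ completely and order $\lambda_{1}\geq\cdots\geq\lambda_{n}$. The paper (and \cite{C15}) instead rotates first so that $e_{1}$ is the gradient direction, $u_{1}=|Du|$ and $u_{j}=0$ for $j\geq2$, and only then diagonalises the residual block $\{u_{ij}\}_{2\leq i,j\leq n}$; see \eqref{4.19}. With that frame, the first-order condition $(\log\Phi)_{1}=0$ reads $2u_{11}/u_{1}+\alpha u_{1}+\beta\rho_{1}/\rho=0$ and forces $u_{11}\leq -c\,|Du|^{2}<0$ once $|Du|$ beats the cutoff contribution (compare \eqref{4.27}). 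Now $D^{2}u(x_{0})$ has exactly the structure \eqref{2.10}, and Lemma~\ref{lem2.6} delivers simultaneously $F^{11}\geq c\sum_{i}F^{ii}$ from \eqref{2.11} and $\sum_{i}F^{ii}\gtrsim(-u_{11})^{k-l-1}\gtrsim|Du|^{2(k-l-1)}$ from \eqref{2.12}. A single good term proportional to $F^{11}|Du|^{2}$ then absorbs every bad term at once (cf.\ \eqref{4.33}--\eqref{4.35}); no dichotomy on $\lambda_{n}$ is needed, and all three exponents in \eqref{4.2} fall out of one inequality.

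With your full diagonalisation the regime-1 argument fails as written. From $\nabla\log\Phi(x_{0})=0$ one has $2\lambda_{i}u_{i}/|Du|^{2}+\alpha u_{i}+\beta\rho_{i}/\rho=0$; weighting by $u_{i}$ and summing yields $\sum_{i}\lambda_{i}u_{i}^{2}\approx-\tfrac{\alpha}{2}|Du|^{4}$, which forces some \emph{negative} eigenvalue of order $-|Du|^{2}$ but gives no lower bound on the \emph{largest} eigenvalue $\lambda_{1}$. Moreover the proposed good term $F^{11}\lambda_{1}^{2}/|Du|^{2}$ is weak: by \eqref{2.2} the coefficient $F^{11}$ attached to the largest eigenvalue is the \emph{smallest} of the $F^{ii}$, so there is no reason it should dominate $\sum_{i}F^{ii}/\rho^{2}$. (Incidentally, with your linear choice $h(u)=\alpha u$ one has $h''=0$, so the term you wrote as $+\alpha\sum_{i}F^{ii}u_{i}^{2}$ on the left actually appears as $-\alpha^{2}\sum_{i}F^{ii}u_{i}^{2}$ on the right; the paper uses $h(u)=-\log(1+M_{0}-u)$ precisely so that $h''-\tfrac{1}{2}h'^{2}>0$.) Your regime~2 is sound---it is exactly \eqref{2.12}---but regime~1 does not close. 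The remedy is to adopt the gradient-aligned frame from the outset, after which the dichotomy becomes unnecessary.
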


Hence we can get the interior gradient estimate in $\Omega \setminus \Omega_{\mu}$ directly.
\begin{theorem} \label{th4.2}
Suppose $\Omega \subset \mathbb{R}^n$ is a bounded domain, $f \in C^1(\overline{\Omega})$ is a positive function and $u \in C^3(\Omega)$ is a $k$-admissible solution of Hessian quotient equation \eqref{1.1}, then we have
\begin{align}\label{4.3}
\sup_{\Omega \setminus \Omega_{\mu}} |D u|  \leq M_1,
\end{align}
where $M_1$ depends on $n$, $k$, $l$, $\mu$, $|u|_{C^0}$ and $|D_x f|_{C^0}$.
\end{theorem}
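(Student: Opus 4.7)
The plan is to deduce Theorem \ref{th4.2} as a direct consequence of the pointwise interior estimate Theorem \ref{th4.1}, which is quoted from \cite{C15}. The observation is that for any $x_0$ in the compact subset $\Omega \setminus \Omega_{\mu}$, the ball $B_{\mu}(x_0)$ is contained in $\Omega$, so Theorem \ref{th4.1} can be applied on this ball with radius $r = \mu$.

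First I would fix an arbitrary point $x_0 \in \Omega \setminus \Omega_{\mu}$ and note that by definition of $\Omega_{\mu}$, $\text{dist}(x_0, \partial \Omega) \geq \mu$, hence $B_{\mu}(x_0) \subset \Omega$. Next I would translate coordinates so that $x_0$ plays the role of the origin in Theorem \ref{th4.1}. In our situation the right-hand side $f = f(x)$ of equation \eqref{1.1} depends only on $x$, so in the notation of Theorem \ref{th4.1} we have $D_u f \equiv 0$ and $D_p f \equiv 0$, and only $|D_x f|_{C^0(\overline{\Omega})}$ enters the estimate. Applying Theorem \ref{th4.1} to $u$ on $B_{\mu}(x_0)$ gives
\begin{equation*}
|Du(x_0)| \leq C\left(\frac{\mathop{\text{osc}}\limits_{B_{\mu}(x_0)} u}{\mu} + \bigl[\mathop{\text{osc}}\limits_{B_{\mu}(x_0)} u\bigr]^{\frac{k-l+1}{2(k-l)}} + \bigl[\mathop{\text{osc}}\limits_{B_{\mu}(x_0)} u\bigr]^{\frac{k-l}{2(k-l)+1}}\right),
\end{equation*}
with $C$ depending only on $n$, $k$, $l$ and $|D_x f|_{C^0}$.

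Finally I would bound the oscillation by $\mathop{\text{osc}}\limits_{B_{\mu}(x_0)} u \leq 2 |u|_{C^0(\overline{\Omega})}$, which by Theorem \ref{th3.1} is already controlled by the data. Taking the supremum over $x_0 \in \Omega \setminus \Omega_{\mu}$ yields a constant $M_1$ depending only on $n$, $k$, $l$, $\mu$, $|u|_{C^0}$ and $|D_x f|_{C^0}$, as claimed. There is no real obstacle here: all the analytic work is already packaged in the cited Theorem \ref{th4.1}, and the present statement is essentially a geometric reformulation obtained by covering $\Omega \setminus \Omega_{\mu}$ by interior balls of radius $\mu$.
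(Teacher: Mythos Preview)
Your proposal is correct and matches the paper's approach exactly: the paper simply states that Theorem \ref{th4.2} follows directly from Theorem \ref{th4.1}, and your argument of applying Theorem \ref{th4.1} on $B_\mu(x_0)$ for each $x_0 \in \Omega \setminus \Omega_\mu$ is precisely the intended deduction.
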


\subsection{ Near boundary gradient estimate}

\begin{theorem} \label{th4.3}
Suppose $\Omega \subset \mathbb{R}^n$ is a $C^3$ bounded domain, $f \in C^1(\overline{\Omega})$ is a positive function, $\varphi \in C^3 (\partial \Omega)$ and $u \in C^3(\Omega) \cap C^2(\overline \Omega)$ is the $k$-admissible solution of the Hessian quotient equation \eqref{1.2}, then we have
\begin{align}\label{4.4}
\sup_{\Omega_{\mu}} |D u|  \leq \max\{M_1, \widetilde{M_1}\},
\end{align}
where $M_1$ depends on $n$, $k$, $l$, $\mu$, $M_0$ and $|D f|_{C^0}$, and $\widetilde{M_1}$ depends on $n$, $k$, $l$, $\mu$, $\Omega$, $M_0$, $\sup f$, $|D f|_{C^0}$ and $|\varphi|_{C^3}$.
\end{theorem}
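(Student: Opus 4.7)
The plan is a maximum-principle argument on the collar $\overline{\Omega_\mu}$ with a carefully chosen auxiliary function, case-split by where the maximum is attained. The inner face $\partial\Omega_\mu\cap\Omega=\{d=\mu\}$ is disposed of by the interior estimate Theorem \ref{th4.2}, the physical boundary $\partial\Omega$ is handled by the Neumann condition together with the sign of $\Phi_\nu$, and the open near-boundary region is handled by the linearized Hessian quotient operator $F^{ij}=\partial(\sigma_k/\sigma_l)/\partial u_{ij}$ combined with Lemma \ref{lem2.6}.

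More concretely, I would take
\[
\Phi(x) \;=\; \log|Du(x)|^2 \;+\; \alpha_1\, d(x) \;+\; \alpha_2\, h(u(x)),
\]
for an appropriate monotone $h$, where $\alpha_1,\alpha_2>0$ are large constants depending on $n,k,l,\Omega,M_0,\sup f,|Df|_{C^0},|\varphi|_{C^3}$. Let $x_0\in\overline{\Omega_\mu}$ realize $\max \Phi$. If $x_0\in\partial\Omega_\mu\cap\Omega$, Theorem \ref{th4.2} gives $|Du(x_0)|\leq M_1$, hence $\Phi$ is bounded on $\overline{\Omega_\mu}$ by $2\log M_1+\alpha_1\mu+C\alpha_2 M_0$, and we are done. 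If $x_0\in\partial\Omega$, the maximum property gives $\Phi_\nu(x_0)\geq 0$, which together with $d_\nu=-1$ and $u_\nu=-u+\varphi$ produces an inequality of the form
\[
\alpha_1 \;\leq\; \frac{(|Du|^2)_\nu(x_0)}{|Du(x_0)|^2} \;+\; C\bigl(\alpha_2, M_0, |\varphi|_{C^0}\bigr).
\]
One then expands $(|Du|^2)_\nu=2\sum_k u_k u_{k\nu}$ by splitting tangential and normal components: tangential mixed derivatives $u_{\alpha\nu}$ are obtained by tangentially differentiating the Neumann condition (introducing $u,\,Du,\,\varphi$, and the second fundamental form of $\partial\Omega$), while the normal contribution $u_\nu u_{\nu\nu}/|Du|^2$ is small because $|u_\nu|\leq M_0+|\varphi|_{C^0}$ on the boundary. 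Taking $\alpha_1$ sufficiently large then forces $|Du(x_0)|\leq \widetilde{M_1}$.

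In the interior case $x_0\in\Omega_\mu^\circ$, use $\nabla\Phi(x_0)=0$ and $F^{ij}\Phi_{ij}(x_0)\leq 0$. Differentiating the equation once yields $F^{ij}u_{ijm}=f_m$, which feeds into the standard expansion of $F^{ij}(\log|Du|^2)_{ij}$ and produces the positive principal term $\frac{2}{|Du|^2}F^{ij}u_{ki}u_{kj}$, together with controlled error from $Df$, from $\alpha_1 F^{ij}d_{ij}$, and from the identity $F^{ij}u_{ij}=(k-l)f$. Rotate coordinates at $x_0$ so that $D^2 u$ is diagonal and the first axis is aligned with $Du(x_0)$. The first-order condition $\nabla\Phi(x_0)=0$ then ties $u_{11}(x_0)$ to $|Du(x_0)|$ via a linear relation with coefficients bounded in terms of the data; in the regime $|Du(x_0)|\gg 1$ this forces $u_{11}$ to have large magnitude with a prescribed sign. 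Once $u_{11}<0$ (which is precisely what the choice of $h$ is designed to guarantee), Lemma \ref{lem2.6} delivers the crucial lower bound $\sum_i F^{ii}\geq \frac{k-l}{kC_n^l}(-u_{11})^{k-l-1}$, and this dominates all remaining bad terms, yielding $|Du(x_0)|\leq \widetilde{M_1}$.

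The main obstacle is the boundary case: the Neumann condition supplies only $u_\nu$, while $(|Du|^2)_\nu$ contains the \emph{a priori} uncontrolled second normal derivative $u_{\nu\nu}$. The saving observation is that $u_\nu u_{\nu\nu}/|Du|^2$ scales as $|u_{\nu\nu}|/|Du|$, which is negligible in the large-gradient regime against which we need to argue; taking $\alpha_1$ large enough exploits this to close the argument. A secondary technical point in the interior case is ensuring that, across all configurations of $\lambda(D^2 u)\in\Gamma_k$ at $x_0$, the first-order optimality really does produce a negative diagonal entry to which Lemma \ref{lem2.6} applies; this is what dictates the specific choice of $h$ and the relative sizes of $\alpha_1$ and $\alpha_2$.
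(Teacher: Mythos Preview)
Your overall architecture (three-case split, Lemma \ref{lem2.6} in the interior collar) matches the paper, but there is a genuine gap in your boundary case.

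You claim the contribution $u_\nu u_{\nu\nu}/|Du|^2$ is ``negligible in the large-gradient regime'' because $|u_\nu|\le M_0+|\varphi|_{C^0}$. This only reduces the term to $C|u_{\nu\nu}|/|Du|^2$, and at this point in the paper there is \emph{no} a priori bound on $u_{\nu\nu}$; the $C^2$ estimates come only after the gradient estimate. So nothing prevents $|u_{\nu\nu}|$ from being as large as $|Du|^2$ (or larger), and then making $\alpha_1$ big does not help. The paper resolves this not by absorbing $u_{\nu\nu}$ but by eliminating it: instead of $\log|Du|^2$, it builds the auxiliary function from $\log|Dw|^2$ with
\[
w(x)=(1-d)u+\varphi\,d,
\]
which is engineered so that $w_\nu=(1-d)u_\nu+(u-\varphi)+\varphi_\nu d=0$ on $\partial\Omega$. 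Consequently $(|Dw|^2)_\nu$ on the boundary involves only tangential second derivatives of $u$, which \emph{are} controlled by differentiating the Neumann condition tangentially. This substitution is the main new idea your proposal is missing.

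A secondary technical point: in the interior case you ``rotate coordinates at $x_0$ so that $D^2u$ is diagonal and the first axis is aligned with $Du(x_0)$.'' These two requirements are generally incompatible. The paper (and indeed the hypothesis of Lemma \ref{lem2.6}) only asks that $e_1$ be aligned with $Dw$ and that the $(n-1)\times(n-1)$ block $\{u_{ij}\}_{2\le i,j\le n}$ be diagonal, which is always achievable and is exactly the structure \eqref{2.10} that Lemma \ref{lem2.6} requires. With that fix and the $w$-substitution above, your outline becomes the paper's proof.
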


\begin{proof}
The proof follows the idea of Ma-Qiu-Xu \cite{MQX16} and Ma-Qiu \cite{MQ15}.

Since $\Omega$ is a $C^3$ domain, it is well known that there exists a small positive universal constant $0<{\mu} < \frac{1}{10}$ such that
$d(x) \in C^3(\overline{\Omega_{\mu}})$. As in Simon-Spruck \cite{SS76} or Lieberman \cite{L13} (in page 331), we can
extend $\nu$ by $\nu = - D d$ in $\Omega_{\mu}$ and thus $\nu$ is a $C^2(\overline{\Omega_{\mu}})$ vector field. As mentioned
in the book \cite{L13}, we also have the following formulas
\begin{align}
\label{4.5}&|D \nu| + |D^2 \nu| \leq C_0, \quad \text{in} \quad \overline{\Omega_{ \mu}}, \\
\label{4.6}&\sum_{i=1}^n \nu^i D_j \nu^i =0, \quad \sum_{i=1}^n \nu^i D_i \nu^j =0, \quad |\nu| =1,  \quad \text{in} \quad \overline{\Omega_{\mu}},
\end{align}
where $C_0$ depends only on $n$ and $\Omega$. As in \cite{L13}, we define
\begin{align}\label{4.7}
c^{ij} = \delta_{ij} - \nu^i \nu^j,  \quad \text{in} \quad \overline{\Omega_{ \mu}},
\end{align}
and for a vector $\zeta \in \mathbb{R}^n$, we write $\zeta'$ for the vector with $i$-th component $\sum_{j=1}^n c^{ij} \zeta^j$.
Then we have
\begin{align}\label{4.8}
|(D u)'|^2 = \sum_{i,j=1}^n c^{ij} u_i u_j,  \quad \text{and} \quad |D u|^2 =|(D u)'|^2 + u_\nu ^2.
\end{align}

We consider the auxiliary function
\begin{align} \label{4.9}
G(x)  = \log |D w|^2  +h(u) + g(d),
\end{align}
where
\begin{align*}
&w(x) = (1-d)u + \varphi(x) d(x),  \\
&h(u) = -\log(1+M_0 -u),\\
&g(d)= \alpha_0 d,
\end{align*}
with $\alpha_0 > 0 $ to be determined later. Note that here $\varphi \in C^3(\overline \Omega)$ is a extension with universal $C^3$ norms.

It is easy to know $G(x)$ is well-defined in $\overline{\Omega_{\mu}}$. Then we assume that $G(x)$ attains its maximum at a point $x_0 \in \overline{\Omega_{\mu}}$. If we have $|D u|(x_0) \leq 10n[|\varphi|_{C^1({\Omega})} + \sup_\Omega |u|]$, then we can get directly
\begin{align}\label{4.10}
\sup_{\Omega_{\mu}} \log|D u|^2 =&\sup_{\Omega_{\mu}} \log \frac{|D w- d D \varphi+ (\varphi-u) D d|^2}{(1-d)^2}\notag \\
\leq& 2 [\sup_{\Omega_{\mu}} \log |D w|^2 + (|\varphi|_{C^1({\Omega})} + \sup_\Omega |u|)^2] \notag \\
\leq& 2 [\log |D w|^2(x_0) + \sup_{\Omega_{\mu}} |h(u)| + \sup_{\Omega_{\mu}} |g(d)| + (|\varphi|_{C^1({\Omega})} + \sup_\Omega |u|)^2] \notag \\
\leq& 2 [\log |D u|^2(x_0) + \log(1+2M_0 ) + \alpha_0 + 2(|\varphi|_{C^1({\Omega})} + \sup_\Omega |u|)^2] \notag \\
\leq& 2 [\log(1+2M_0 ) + \alpha_0 + (10n+2)(|\varphi|_{C^1({\Omega})} + \sup_\Omega |u|)^2].
\end{align}
So \eqref{4.4} holds.

Hence, we can assume $|D u|(x_0) > 10n[|\varphi|_{C^1({\Omega})} + \sup_\Omega |u|]$ in the following. Then we have
\begin{align}\label{4.11}
 \frac{1}{2} |D u|^2 \leq |D w|^2 \leq 2 |D u|^2,
 \end{align}
where we used $w_i = (1-d)u_i + \varphi_i d + (\varphi-u) d_i$. Now we divide into three cases to complete the proof of Theorem \ref{th4.3}.

$\blacklozenge$ CASE I: $x_0 \in \partial \Omega_\mu \cap \Omega$.

Then $x_0 \in \Omega \setminus \Omega_\mu$, and  we can use the interior gradient estimate, that is from Theorem \ref{th4.2},
\begin{align}\label{4.12}
|D u| (x_0) \leq \sup_{\Omega \setminus \Omega_{\mu}} |D u|  \leq M_1,
\end{align}
then we can prove \eqref{4.4} by a calculation similar with \eqref{4.10}.

$\blacklozenge$ CASE II: $x_0 \in \partial \Omega$.

At $x_0$, we have $d=0$, and
\begin{align}\label{4.13}
0 \le G_\nu   = \frac{{(|D w|^2 )_i \nu ^i }}{{|D w|^2 }} + h'u_\nu   - g'.
\end{align}
We know from \eqref{4.8}
\begin{align}
|D w|^2  = |(D w)'|^2  + w_\nu  ^2 = c^{pq} w_p w_q  + w_\nu  ^2,  \notag
\end{align}
and by the Neumann boundary condition, we can get
\begin{align}
w_\nu   = (1 - d)u_\nu   + [u - \varphi ] + \varphi _\nu  d = 0. \notag
\end{align}
Hence
\begin{align}\label{4.14}
 (|D w|^2 )_i \nu ^i  =& [c^{pq} _{,i} w_p w_q  + 2c^{pq} w_{pi} w_q  + 2w_\nu  D _i w_\nu  ]\nu ^i  \notag \\
  =& c^{pq} _{,i} w_p w_q \nu ^i  + 2c^{pq} [u_{pi}  + (\varphi _p  - u_p )d_i  + (\varphi _i  - u_i )d_p  + (\varphi  - u)d_{ij} ]w_q \nu ^i \notag \\
  \le& C_1 |D w|^2  + 2c^{pq} u_{pi} w_q \nu ^i  + C_2 [|D w| + |D w|^2 ],
\end{align}
where $C_1 = \sum\limits_{pq}|D c^{pq}|_{C^0}$ and $C_2= 2\sum\limits_{pq}| c^{pq}|_{C^0}\big[2|D \varphi|_{C^0} +(|\varphi|_{C^0} + |u|_{C^0})| |D^2 d|_{C^0} +4\big]$. Also by the Neumann boundary condition, we can get
\begin{align}
c^{pq} D _p (u_i \nu ^i ) = c^{pq} D _p [ - u + \varphi ], \notag
\end{align}
so
\begin{align}
c^{pq} u_{pi} \nu ^i  =  - c^{pq} u_i \nu ^i _{, p}  + c^{pq} ( - u_p  + \varphi _p ). \notag
\end{align}
Hence
\begin{align}\label{4.15}
2c^{pq} u_{pi} w_q \nu ^i  =  - 2c^{pq} u_i w_q \nu ^i _{,p}  + c^{pq} ( - u_p  + \varphi _p )w_q  \le C_3 [|D w| + |D w|^2 ],
\end{align}
where $C_3= 4\sum\limits_{pq}| c^{pq}|_{C^0}\big[|D^2 d|_{C^0}+|D \varphi|_{C^0} +1 \big]$. From \eqref{4.13}, \eqref{4.14} and \eqref{4.15}, we get
\begin{align}\label{4.16}
0 \le G_\nu   \le C_1  + C_2  + C_3  + \frac{{C_2  + C_3 }}{{|D w|}} + \frac{{ - u + \varphi }}{{1 + M_0  - u}} - \alpha _0.
\end{align}
We choose
\begin{align}\label{4.17}
\alpha _0  = C_1  + C_2  + C_3  + |u|_{C^0} +|\varphi |_{C^0} + 1,
\end{align}
then
\begin{align}\label{4.18}
|D w| \le C_2  + C_3.
\end{align}
So we can prove \eqref{4.4} by a calculation similar with \eqref{4.10}, or $x_0$ cannot be at the boundary $\partial \Omega$ by a contradiction discussion.

$\blacklozenge$ CASE III: $x_0 \in \Omega_\mu$.

At $x_0$, we have $0 < d < \mu$, and by rotating the coordinate $e_1, \cdots, e_{n}$, we can assume
\begin{align}\label{4.19}
w_1(x_0) = |D w|(x_0) >0,  \quad \{ u_{ij}(x_0) \}_{2 \leq i, j \leq n} \text{ is diagonal}.
\end{align}
In the following, we denote $\widetilde \lambda = (\widetilde \lambda_2, \cdots, \widetilde\lambda_{n})=(u_{22}(x_0), \cdots, u_{nn}(x_0))$, and all the calculations are at $x_0$. So from the definition of $w$, we know $w_i  = (1 - d)u_i  + [\varphi  - u]d_i  + \varphi _i d$, and by \eqref{4.19} we get
\begin{align}
\label{4.20}& u_1  = \frac{{w_1  - [\varphi  - u]d_1  - \varphi _1 d}}{{1 - d}} >0, \\
\label{4.21}& u_i  = \frac{{ - [\varphi  - u]d_i  - \varphi _i d }}{{1 - d}}, \quad  i \ge 2.
\end{align}
By the assumption $|D u|(x_0) > 10n[|\varphi|_{C^1({\Omega})} + \sup_\Omega |u|]$, we know for $i \ge 2$
\begin{align}\label{4.22}
|u_i|\leq \frac{{ |\varphi | +|u| + |\varphi _i|}}{{1 - d}} \leq \frac{1}{9n}|D u|(x_0) ,
\end{align}
hence
\begin{align}\label{4.23}
u_1  = \sqrt{|D u|^2 - \sum_{i=2}^n u_i^2} \geq \frac{1}{2} |D u| \geq \frac{1}{4} w_1.
\end{align}

Also we have at $x_0$,
\begin{align}\label{4.24}
0 = G_i  = \frac{{(|D w|^2 )_i }}{{|D w|^2 }} + h'u_i  + \alpha _0 d_i,
\end{align}
hence
\begin{align}\label{4.25}
\frac{{2w_{1i} }}{{w_1 }} =  - [h'u_i  + \alpha _0 d_i ].
\end{align}

From the definition of $w$, we know
\begin{align}\label{4.26}
w_{1i}  =& (1 - d)u_{1i}  + [\varphi  - u]d_{1i}  + \varphi _{1i} d  \notag \\
  &+ [\varphi _1  - u_1 ]d_i  + [\varphi _i  - u_i ]d_1.
\end{align}
So we have
\begin{align}\label{4.27}
u_{11}  =& \frac{{w_{11} }}{{1 - d}} - \frac{{[\varphi  - u]d_{11}  + \varphi _{11} d + 2[\varphi _1  - u_1 ]d_1 }}{{1 - d}}  \notag \\
=& \frac{{ - [h'u_1  + \alpha _0 d_1 ]w_1 }}{{2(1 - d)}} - \frac{{[\varphi  - u]d_{11}  + \varphi _{11} d + 2[\varphi _1  - u_1 ]d_1 }}{{1 - d}} \notag \\
\le& \frac{{ - h'}}{{2(1 - d)}}u_1 w_1  + \frac{{\alpha _0 w_1 }}{{2(1 - d)}} + \frac{{( |\varphi | + |u|) |d_{11}| + |\varphi _{11} | + 2|\varphi _1 |}}{{1 - d}} + \frac{{2u_1 }}{{(1 - d)}} \notag \\
\le& \frac{{ - h'}}{{4(1 - d)}}u_1 w_1 \le  - \frac{1}{{16(1 + 2M_0 )}}w_1 ^2  < 0,
\end{align}
where we have assumed $w_1  \ge 8(1 + 2M_0 )[\alpha _0  + 8+ ( |\varphi |_{C^0} + |u|_{C^0}) |D^2 d|_{C^0} + |D^2 \varphi|_{C^0} + 2|D \varphi|_{C^0}]$ ( otherwise there is nothing to prove). Moreover, for $i =1, \cdots, n$, we can get
\begin{align}\label{4.28}
|u_{1i} | =& |\frac{{w_{1i} }}{{1 - d}} - \frac{{[\varphi  - u]d_{1i}  + \varphi _{1i} d + [\varphi _1  - u_1 ]d_i  + [\varphi _i  - u_i ]d_1 }}{{1 - d}}| \notag \\
=& |\frac{{ - [h'u_i  + \alpha _0 d_i ]w_1 }}{{2(1 - d)}} - \frac{{[\varphi  - u]d_{1i}  + \varphi _{1i} d + [\varphi _1  - u_1 ]d_i  + [\varphi _i  - u_i ]d_1 }}{{1 - d}}|  \notag \\
\le& C_4 w_1 ^2.
\end{align}

Denote
\[
F(D^2 u ) = \frac{{\sigma _k (D^2 u )}}{{\sigma _l (D^2 u )}},\quad \text{and } \quad F^{ij}  = \frac{{\partial F}}{{\partial u_{ij} }}.
\]
Then we have
\[
G_{ij}  = \frac{{(|D w|^2 )_{ij} }}{{|D w|^2 }} - \frac{{(|D w|^2 )_i }}{{|D w|^2 }}\frac{{(|D w|^2 )_j }}{{|D w|^2 }} + h'u_{ij}  + h''u_i u_j  + \alpha _0 d_{ij}, \notag
\]
and
\begin{align}\label{4.29}
0 \ge& \sum\limits_{ij = 1}^n {F^{ij} G_{ij} }  = \frac{{F^{ij} (|D w|^2 )_{ij} }}{{|D w|^2 }} - F^{ij} \frac{{(|D w|^2 )_i }}{{|D w|^2 }}\frac{{(|D w|^2 )_j }}{{|D w|^2 }}  \notag \\
&+ F^{ij} [h'u_{ij}  + h''u_i u_j  + \alpha _0 d_{ij} ] \notag \\
=& \frac{{2F^{ij} [\sum\limits_{p = 2}^n {w_{pi} w_{pj} }  + w_{1i} w_{1j}  + w_1 w_{1ij} ]}}{{w_1 ^2 }} - F^{ij} \frac{{2w_{1i} }}{{w_1 }}\frac{{2w_{1j} }}{{w_1 }} \notag \\
&+ F^{ij} [h'u_{ij}  + h''u_i u_j  + \alpha _0 d_{ij} ] \notag \\
\ge& \frac{{2F^{ij} w_{1ij} }}{{w_1 }} - \frac{1}{2}F^{ij} \frac{{2w_{1i} }}{{w_1 }}\frac{{2w_{1j} }}{{w_1 }}+ F^{ij} [h'u_{ij}  + h''u_i u_j  + \alpha _0 d_{ij} ] \notag \\
=& \frac{{2F^{ij} w_{1ij} }}{{w_1 }} - \frac{1}{2}F^{ij} [h'u_i  + \alpha _0 d_i ][h'u_j  + \alpha _0 d_j ] \notag \\
&+ (k - l)h'f + F^{ij} [h''u_i u_j  + \alpha _0 d_{ij} ] \notag  \\
\ge& \frac{{2F^{ij} w_{1ij} }}{{w_1 }} + F^{ij} [(h'' - \frac{1}{2}h'^2 )u_i u_j  - \alpha _0 h'd_i u_j  + \alpha _0 d_{ij}  - \frac{1}{2}\alpha _0 d_i d_j ].
\end{align}
It is easy to know
\begin{align}\label{4.30}
 &F^{ij} [(h'' - \frac{1}{2}h'^2 )u_i u_j  - \alpha _0 h'd_i u_j  + \alpha _0 d_{ij}  - \frac{1}{2}\alpha _0 d_i d_j ]  \notag \\
\ge& \frac{1}{{2(1 + 2M_0 )}}[F^{11} u_1 ^2  -2 \sum\limits_{i=2}^n |F^{1i} u_1 u_i |]  \notag \\
&- \alpha _0 h'|D u|\sum\limits_{i = 1}^n {F^{ii} }  - \alpha _0 ( |D^2 d|  + 1)\sum\limits_{i = 1}^n {F^{ii} }  \notag\\
\ge& \frac{1}{{32(1 + 2M_0 )}}F^{11} w_1 ^2  - C_5 w_1 \sum\limits_{i = 1}^n {F^{ii} }  - C_5\sum\limits_{i = 1}^n {F^{ii} }.
\end{align}
From the definition of $w$, we know
\begin{align}\label{4.31}
w_{ij1}  =& (1 - d)u_{ij1}  + [\varphi  - u]d_{ij1}  + \varphi _{ij1} d \notag \\
 &+ [\varphi _{ij}  - u_{ij} ]d_1  + [\varphi _{i1}  - u_{i1} ]d_j  + [\varphi _{j1}  - u_{j1} ]d_i  \notag \\
 &+ [\varphi _i  - u_i ]d_{1j}  + [\varphi _1  - u_1 ]d_{ij}  + [\varphi _j  - u_j ]d_{i1},
\end{align}
so
\begin{align}\label{4.32}
\frac{{2F^{ij} w_{1ij} }}{{w_1 }} =& \frac{2}{{w_1 }}[(1 - d)F^{ij} u_{ij1}  - d_1 F^{ij} u_{ij}  - 2F^{ij} u_{i1} d_j ]  \notag \\
&+ \frac{2}{{w_1 }}F^{ij} [(\varphi  - u)d_{ij1}  + \varphi _{ij1} d + \varphi _{ij} d_1  + 2\varphi _{i1} d_j  + 2(\varphi _i  - u_i )d_{1j}  + (\varphi _1  - u_1 )d_{ij} ]  \notag  \\
\ge& \frac{2}{{w_1 }}[(1 - d)f_1  - (k - l)d_1 f - 2F^{ij} u_{i1} d_j ] - C\frac{{\sum\limits_{i = 1}^n {F^{ii} } }}{{w_1 }} - C\sum\limits_{i = 1}^n {F^{ii} }  \notag \\
\ge&  - \frac{C_6}{{w_1 }} - C_6w_1 \sum\limits_{i = 1}^n {F^{ii} }  - C_6\frac{{\sum\limits_{i = 1}^n {F^{ii} } }}{{w_1 }} - C_6\sum\limits_{i = 1}^n {F^{ii} }.
\end{align}

From \eqref{4.29}, \eqref{4.30} and \eqref{4.32}, we get
\begin{align}\label{4.33}
0 \ge& \sum\limits_{ij = 1}^n {F^{ij} G_{ij} } \notag \\
\geq& \frac{1}{{32(1 + 2M_0 )}}F^{11} w_1 ^2  - (C_5+C_6)w_1 \sum\limits_{i = 1}^n {F^{ii} } - (C_5+C_6)\sum\limits_{i = 1}^n {F^{ii} } \notag \\
 &- \frac{C_6}{{w_1 }} - C_6\frac{{\sum\limits_{i = 1}^n {F^{ii} } }}{{w_1 }}.
\end{align}

From Lemma \ref{lem2.6}, we know
\begin{align}\label{4.34}
F^{11} \geq c_2 \sum {F^{ii} }
\end{align}
where $c_2 =\frac{{n(k - l)}}{{k(n - l)}}\frac{1}{{n - k + 1}}$. Moreover,
\begin{align}\label{4.35}
\sum {F^{ii} } \geq c_3 (-u_{11})^{k-l-1} \geq c_3  [\frac{1}{{16(1 + 2M_0 )}}w_1 ^2]^{k-l-1}
\end{align}
where $c_3 =\frac{{k - l}}{k} \frac{1}{C_n^l}$. Then we can get from \eqref{4.33}, \eqref{4.34} and \eqref{4.35}
\begin{align}
 w_1 (x_0) \leq& C_7. \notag
\end{align}
So we can prove \eqref{4.4} by a calculation similar with \eqref{4.10}.
\end{proof}

As discussed in Remark \ref{rmk1.4}, we need to consider the equation \eqref{1.4} to prove Theorem \ref{th1.2}. It is crucial to establish a global gradient estimate of $u^\varepsilon$ independent of $\varepsilon$, and we need the strict convexity of $\Omega$. Following the idea of \cite{QX16}, we can easily obtain
\begin{theorem} \label{th4.4}
Suppose $\Omega \subset \mathbb{R}^n$ is a $C^3$ strictly convex domain, $f \in C^1(\overline{\Omega})$ is a positive function, $\varphi \in C^3 (\partial \Omega)$ and $u^\varepsilon \in C^3(\Omega) \cap C^2(\overline \Omega)$ is the $k$-admissible solution of Hessian quotient equation \eqref{1.4} with $\varepsilon >0$ sufficiently small, then we have
\begin{align}\label{4.36}
\sup_{\Omega} |D u^\varepsilon |  \leq \overline{M_1},
\end{align}
and
\begin{align}\label{4.37}
\sup_{\Omega} | u^\varepsilon - \frac{1}{|\Omega|} \int_\Omega u^\varepsilon |  \leq \overline{M_1},
\end{align}
where $\overline{M_1}$ depends on $n$, $k$, $l$, $\Omega$, $|f|_{C^1}$ and $|\varphi|_{C^3}$.
\end{theorem}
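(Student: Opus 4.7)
The scheme adapts the auxiliary function argument of Theorem \ref{th4.3} following Qiu--Xia \cite{QX16}. Two modifications are forced by the condition $u^\varepsilon_\nu = -\varepsilon u^\varepsilon + \varphi$: the auxiliary function cannot carry any term depending pointwise on $u^\varepsilon$ (no $\varepsilon$-independent $C^0$ bound is available), and the strict convexity of $\Omega$ must replace the absorbing constant that the boundary condition $u_\nu = -u + \varphi$ previously provided. The only $\varepsilon$-uniform pointwise control we have on the solution itself is $|\varepsilon u^\varepsilon|_{C^0} \le \overline{M_0}$ from Theorem \ref{th3.2}.

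For the gradient bound \eqref{4.36}, I would set $w^\varepsilon = (1-d)u^\varepsilon + \varphi d$ as in Theorem \ref{th4.3} and study
\begin{equation*}
G(x) = \log |D w^\varepsilon|^2 + \alpha_0 d(x)
\end{equation*}
on $\overline{\Omega_\mu}$, with $\alpha_0 > 0$ to be chosen. Let $x_0$ be a maximum point and split into three cases. \emph{Case I} ($x_0 \in \partial \Omega_\mu \cap \Omega$) is handled by the interior estimate of Theorem \ref{th4.1}, whose bound involves only $\operatorname{osc} u^\varepsilon$; the apparent circularity with \eqref{4.37} is resolved by a bootstrap, since the $G$-bound on $\partial\Omega_\mu$ controls $\operatorname{osc}_{\Omega_\mu} u^\varepsilon$ and hence feeds back to close the interior bound. \emph{Case III} ($x_0$ interior to $\Omega_\mu$) is analyzed exactly as in Theorem \ref{th4.3} with the $h$-term deleted: Lemma \ref{lem2.6} supplies $F^{11} \ge c_2 \sum F^{ii}$ and $\sum F^{ii} \ge c_3 w_1^{2(k-l-1)}$, so the linear-in-$w_1$ bad terms are absorbed and $w_1(x_0) \le C$.

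\emph{Case II} ($x_0 \in \partial \Omega$) is the crux. Tangential differentiation of $u^\varepsilon_\nu + \varepsilon u^\varepsilon = \varphi$ combined with \eqref{4.6}--\eqref{4.8} gives $c^{pq} u^\varepsilon_{pi}\nu^i = -c^{pq}u^\varepsilon_j \nu^j_{,p} + c^{pq}(-\varepsilon u^\varepsilon_p + \varphi_p)$. The $-\varepsilon u^\varepsilon_p$ contribution is controlled by $\varepsilon|Du^\varepsilon|$, but more importantly the first term, evaluated against $w^\varepsilon$, produces via strict convexity an estimate of the shape
\begin{equation*}
c^{pq} u^\varepsilon_{pi} w^\varepsilon_q \nu^i \le -\lambda_0\, |(Dw^\varepsilon)'|^2 + C\bigl(1 + \varepsilon |u^\varepsilon|\bigr)|Dw^\varepsilon|,
\end{equation*}
where $\lambda_0 > 0$ is a uniform lower bound for the principal curvatures of $\partial \Omega$. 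Inserting this into $G_\nu(x_0) \ge 0$ and using that $w^\varepsilon_\nu = 0$ on $\partial \Omega$ (so $|Dw^\varepsilon| = |(Dw^\varepsilon)'|$ there) reduces the boundary inequality to $0 \le -\alpha_0 - 2\lambda_0 + C/|Dw^\varepsilon| + C\varepsilon$, which forces $|Dw^\varepsilon|(x_0)$ to be bounded in terms of $\Omega$, $|\varphi|_{C^1}$, and $\overline{M_0}$ as soon as $\alpha_0$ is fixed suitably.

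Once \eqref{4.36} is in hand, the oscillation estimate \eqref{4.37} is immediate: choosing $y \in \Omega$ with $u^\varepsilon(y) = \frac{1}{|\Omega|}\int_\Omega u^\varepsilon$ via the integral mean value theorem and applying the mean value theorem along a segment gives $|u^\varepsilon(x) - u^\varepsilon(y)| \le \overline{M_1}\,\mathrm{diam}(\Omega)$. The main obstacle is Case II: unlike in Theorem \ref{th4.3}, where the $h(u)$ term produced the absorbing factor $\tfrac{-u + \varphi}{1 + M_0 - u}$ at the boundary, here strict convexity of $\Omega$ is the only remaining source of a useful negative contribution, and one must choose $\alpha_0$ small enough to be dominated by $\lambda_0$ yet large enough to close the Case III maximum principle calculation --- this simultaneous balancing is the core technical content of the proof.
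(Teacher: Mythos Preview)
Your strategic insight --- that strict convexity of $\partial\Omega$ must supply the absorbing negative term that the $h(u)$ factor supplied in Theorem~\ref{th4.3} --- is exactly the point the paper has in mind when it cites \cite{QX16}. However, two concrete steps in your plan do not go through as written.

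\textbf{The function $w^\varepsilon$ does not satisfy $w^\varepsilon_\nu=0$.} With $w^\varepsilon=(1-d)u^\varepsilon+\varphi d$ one computes on $\partial\Omega$
\[
w^\varepsilon_\nu=u^\varepsilon_\nu-\varphi+u^\varepsilon=(-\varepsilon u^\varepsilon+\varphi)-\varphi+u^\varepsilon=(1-\varepsilon)\,u^\varepsilon,
\]
which is \emph{not} zero and, worse, is not uniformly bounded in $\varepsilon$ (only $\varepsilon u^\varepsilon$ is). Hence $|Dw^\varepsilon|\neq|(Dw^\varepsilon)'|$ on $\partial\Omega$, and the Case~II computation collapses: the normal component $w^\varepsilon_\nu$ injects an uncontrolled $(1-\varepsilon)^2(u^\varepsilon)^2$ into $|Dw^\varepsilon|^2$. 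The fix is to redesign $w^\varepsilon$ so that the Neumann condition $u^\varepsilon_\nu=-\varepsilon u^\varepsilon+\varphi$ really yields $w^\varepsilon_\nu=0$; for instance $w^\varepsilon=(1-\varepsilon d)u^\varepsilon+\varphi d$ does the job, and all lower-order corrections then involve only $\varepsilon u^\varepsilon$, which is bounded by Theorem~\ref{th3.2}.

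\textbf{Case III does not survive the deletion of $h(u)$.} In the proof of Theorem~\ref{th4.3} the term $h(u)$ plays two indispensable roles at an interior maximum: (i) the first-order condition \eqref{4.25} together with $h'>0$ forces $u_{11}\le -\tfrac{h'}{4(1-d)}u_1w_1<0$ (cf.\ \eqref{4.27}), which is precisely the hypothesis $a_{11}<0$ needed to invoke Lemma~\ref{lem2.6}; and (ii) the second-order computation produces the dominant good term $(h''-\tfrac12 h'^2)F^{11}u_1^2\sim F^{11}w_1^2$ in \eqref{4.30}, which absorbs all bad terms of order $w_1\sum F^{ii}$. With $h\equiv 0$, the first-order condition gives only $u_{11}=-\tfrac{\alpha_0 d_1}{2(1-d)}w_1+O(w_1)$, whose sign is governed by $d_1$ and cannot be prescribed; so neither conclusion of Lemma~\ref{lem2.6} is available. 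More fatally, the quadratic good term vanishes entirely and nothing remains to dominate the linear-in-$w_1$ errors, so the inequality $0\ge\sum F^{ii}G_{ii}$ degenerates to $0\ge -C\sum F^{ii}-C/w_1$, which carries no information. Case~III therefore cannot be run ``exactly as in Theorem~\ref{th4.3} with the $h$-term deleted''; one needs a genuinely different mechanism in the interior (this is where the argument of \cite{QX16} departs more substantially from the template of Theorem~\ref{th4.3} than your outline suggests).
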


\section{Global second derivatives estimate}

We now come to the a priori estimates of global second derivatives, and we obtain the following theorem

\begin{theorem} \label{th5.1}
Suppose that $\Omega \subset \mathbb{R}^n$ is a $C^4$ convex and strictly $(k-1)$-convex domain, $f \in C^2(\overline{\Omega})$ is a positive function, $\varphi \in C^3(\partial \Omega)$ and $u \in C^4(\Omega)\cap C^3(\overline \Omega)$ is the $k$-admissible solution of Hessian quotient equation \eqref{1.2}, then we have
\begin{align}\label{5.1}
\sup_{\Omega} |D^2 u|  \leq M_2,
\end{align}
where $M_2$ depends on $n$, $k$, $l$, $\Omega$, $|u|_{C^1}$, $\inf f$, $|f|_{C^2}$ and $|\varphi|_{C^3}$.
\end{theorem}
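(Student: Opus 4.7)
The classical three-step program applies: reduce the global $C^2$ bound to a boundary $C^2$ bound, then on $\partial\Omega$ split the Hessian into tangential–tangential, mixed tangential–normal, and double normal components.  Throughout I write $F:=\sigma_k/\sigma_l$, $F^{ij}=\partial F/\partial u_{ij}$, and exploit that $F^{1/(k-l)}$ is concave on $\Gamma_k$.

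\emph{Step 1 (reduction to the boundary).}  Since $\lambda(D^2u)\in\Gamma_k$ forces $\Delta u>0$, it suffices to bound the largest eigenvalue $M(x):=\max_{|\xi|=1}u_{\xi\xi}$.  I would apply the maximum principle to a test function of the shape $W(x,\xi)=\log u_{\xi\xi}+\tfrac{a}{2}|Du|^2+bu$ on $\overline\Omega\times \mathbb{S}^{n-1}$.  If the maximum is attained at an interior point $x_0$ with direction $e_1$ (after rotating so that $D^2u(x_0)$ is diagonal), I would differentiate $\log F(D^2u)=\log f$ twice in $e_1$; the concavity of $F^{1/(k-l)}$ absorbs $F^{ij,rs}u_{ij1}u_{rs1}$ in the standard way, and the second-order condition $F^{ij}W_{ij}(x_0)\le 0$, combined with the first-order conditions $W_i(x_0)=0$ and $a,b$ chosen sufficiently large depending on $|u|_{C^1}$, $\inf f$, $|f|_{C^2}$, yields $u_{11}(x_0)\le C$.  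Hence $\sup_\Omega M\le C(1+\sup_{\partial\Omega}M)$.

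\emph{Step 2 (tangential and mixed components on $\partial\Omega$).}  At a boundary maximizer $y_0$, flatten $\partial\Omega$ locally with $e_n=-\nu(y_0)$ and tangential frame $e_1,\dots,e_{n-1}$ along the principal directions.  Differentiating $u_\nu=-u+\varphi$ twice along $\partial\Omega$ and using Gauss–Weingarten expresses $u_{\alpha\beta}(y_0)$ for $\alpha,\beta<n$ in terms of $|u|_{C^1}$, $|\varphi|_{C^2}$ and the principal curvatures $\kappa_i$, giving $|u_{\alpha\beta}(y_0)|\le C$.  For mixed $u_{\alpha n}$ I would use a Lions–Trudinger–Urbas barrier of the shape
\[
\Phi(x)=A_1(\varphi-u-u_\nu)+A_2|x-y_0|^2\pm T^\alpha(u-\varphi),
\]
on $\Omega_\mu\cap B_r(y_0)$, where $T^\alpha$ is a smooth tangential extension of a boundary vector field; with $A_1,A_2$ taken large (the lower bound $\sum F^{ii}\ge c(-u_{11})^{k-l-1}$ from \eqref{2.12} controls how the $F^{ij}$-coefficients enter) one arranges $F^{ij}\Phi_{ij}\le 0$ and $\Phi\ge 0$ on the parabolic boundary, and since $\Phi(y_0)=0$ the maximum principle forces $|u_{\alpha n}(y_0)|\le C$.

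\emph{Step 3 (double normal bound — the main obstacle).}  It remains to bound $|u_{nn}(y_0)|$.  For the lower bound I argue by contradiction: assuming $u_{nn}(y_0)\le -K$ with $K$ large, the tangential diagonalization of Step 2 places $D^2u(y_0)$ exactly in the form treated by Lemma~\ref{lem2.6} with $a_{11}=u_{nn}<0$, so \eqref{2.11} gives uniform ellipticity in the normal direction and \eqref{2.12} gives $\sum F^{ii}\ge c(-u_{nn})^{k-l-1}$; feeding these into the comparison for $\Psi=u_\nu+u-\varphi+\alpha d-\beta d^2$ on $\Omega_\mu$, where the $(k-1)$-convexity of $\partial\Omega$ gives the right sign of $F^{ij}d_{ij}$ to leading order, Hopf's lemma at $y_0$ produces the contradiction.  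For the upper bound I would examine a tangential test function $V(x,\xi')=u_{\xi'\xi'}+(\text{lower order})$ on the unit tangent bundle of $\partial\Omega$; if its max is interior, Step 1 applies, and if at $y_0$, a further rotation reduces to checking the hypotheses of Lemma~\ref{lem2.7} on the eigenvalues of $D^2u(y_0)$.  If $\lambda_1<\delta\lambda_2$ or $-\lambda_n<\varepsilon\lambda_1$, then $u_{nn}$ is controlled directly by the remaining bounded eigenvalues from Step 2; otherwise Lemma~\ref{lem2.7} yields $F^{11}\ge c_1\sum F^{ii}$, providing the ellipticity that, combined with the strict positivity of $\sigma_{k-1}(\kappa)>0$ on $\partial\Omega$ (ensured by strict $(k-1)$-convexity), closes a Hopf-type inequality and gives $u_{nn}(y_0)\le C$.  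The hardest piece is the case analysis needed to justify applying Lemma~\ref{lem2.7}, together with the design of the auxiliary function so that the boundary geometry enters with the correct sign; everything else is a calibration of constants.
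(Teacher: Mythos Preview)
Your Step~2 contains a genuine gap that propagates into Step~3.  For the Neumann condition $u_\nu=-u+\varphi$, one tangential differentiation yields
\[
u_{\alpha\nu}= -u_\alpha+\varphi_\alpha - u_l D_\alpha\nu^l,
\]
so the \emph{mixed} derivatives $u_{\alpha n}$ are controlled directly by $|u|_{C^1}$ and $|\varphi|_{C^1}$ --- no barrier is needed for them.  Differentiating a second time tangentially produces $u_{\alpha\beta\nu}$ and lower-order terms, not $u_{\alpha\beta}$.  There is no mechanism for bounding purely tangential second derivatives from the Neumann data alone; this is where the Neumann problem differs essentially from the Dirichlet problem, and your Step~2 has them reversed.

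The paper (following Lions--Trudinger--Urbas and Ma--Qiu) handles this by building a Neumann correction into the global test function itself:
\[
v(x,\xi)=u_{\xi\xi}-v'(x,\xi)+K|x|^2+|Du|^2,\qquad v'(x,\xi)=2(\xi\cdot\nu)\,\xi'\cdot\bigl(D\varphi-Du-u_lD\nu^l\bigr).
\]
Then $\widetilde F^{ij}v_{ij}>0$ forces the maximum to $\partial\Omega$; at a tangential boundary maximum one applies Hopf's lemma to $v$, and the convexity of $\Omega$ (the sign of $-D_\alpha\nu^\alpha\,u_{\xi_0\xi_0}$) yields $u_{\xi_0\xi_0}\le C(1+|u_{\nu\nu}|)$.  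The non-tangential case collapses, via the specific form of $v'$, to the double-normal direction.  The output of this step is therefore not an absolute tangential bound but rather
\[
\sup_{\Omega}|D^2u|\le C_9\bigl(1+\max_{\partial\Omega}|u_{\nu\nu}|\bigr),
\]
and this constant $C_9$ is then used in Step~3: in the upper-$u_{\nu\nu}$ argument (paper's Case~III) one needs $u_{11}\ge \delta u_{22}$ with $\delta=\tfrac{2}{5C_9}$ to invoke Lemma~\ref{lem2.7}.  Without the reduction-to-$u_{\nu\nu}$ structure, you cannot verify the hypothesis $\lambda_1\ge\delta\lambda_2$ there.  Your Step~3 barriers are in the right spirit, but the paper's $P,\widetilde P$ carry a factor $(A+\tfrac12 M)h(x)$ with $M=\max_{\partial\Omega}|u_{\nu\nu}|$ built in; this $M$-dependence is what allows the absorption argument to close.
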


Following the idea of Lions-Trudinger-Urbas \cite{LTU86} and Ma-Qiu \cite{MQ15}, we divide the proof of Theorem \ref{th5.1} into three steps. In step one, we reduce global second derivatives to double normal second derivatives on boundary, then we prove the lower estimate of double normal second derivatives on the boundary in step two, and at last we prove the upper estimate of double normal second derivatives on the boundary.

\subsection{Reduce global second derivatives to double normal second derivatives on the boundary}

\begin{lemma} \label{lem5.2}
Suppose that $\Omega \subset \mathbb{R}^n$ is a $C^4$ convex domain, $f \in C^2(\overline{\Omega})$ is a positive function, $\varphi \in C^3(\partial \Omega)$ and $u \in C^4(\Omega)\cap C^3(\overline \Omega)$ is the $k$-admissible solution of Hessian quotient equation \eqref{1.2}, then we have
\begin{align}\label{5.2}
\sup_{\Omega} |D^2 u|  \leq C_9(1+ \max_{\partial \Omega} |u_{\nu \nu}|),
\end{align}
where $C_9$ depends on $n$, $k$, $l$, $\Omega$, $|u|_{C^1}$, $\inf f$, $|f|_{C^2}$ and $|\varphi|_{C^3}$.
\end{lemma}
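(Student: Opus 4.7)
The plan is to bound $M := \max_{x \in \overline\Omega,\,|\xi|=1} u_{\xi\xi}(x)$ by $C(1 + \max_{\partial\Omega}|u_{\nu\nu}|)$; this will suffice because $\lambda(D^2 u) \in \Gamma_k \subset \Gamma_1$ forces $\Delta u > 0$, so an upper bound on each $u_{\xi\xi}$ automatically controls every eigenvalue in absolute value, and hence $|D^2 u|$. Following the scheme of Lions-Trudinger-Urbas \cite{LTU86} and Ma-Qiu \cite{MQ15}, I would consider a test function of the shape
\[
v(x,\xi) = u_{ij}(x)\xi^i\xi^j - 2\bigl(\xi\cdot\nu(x)\bigr)\xi^k c^{lk}(x)\bigl(\varphi_l(x)-u_l(x)\bigr) + A|x|^2,
\]
on $\overline\Omega \times S^{n-1}$, where $c^{lk} = \delta^{lk}-\nu^l\nu^k$ is the tangential projector extended into $\Omega_\mu$ as in Section 4, and $A>0$ is a large constant to be fixed. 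The cross term involving $c^{lk}$ is engineered so that, at a boundary maximum, the normal derivative $v_\nu$ can be simplified by the tangentially-differentiated Neumann condition $u_\nu = -u+\varphi$, eliminating the otherwise uncontrollable third-order contribution. Let $(x_0,\xi_0)$ be a maximiser of $v$; rotate coordinates so that $\xi_0 = e_1$ and $(u_{ij}(x_0))_{2\le i,j\le n}$ is diagonal.

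In the interior case $x_0\in\Omega$, apply the linearised operator $F^{ij}\partial_i\partial_j$, with $F := \sigma_k/\sigma_l$, to $v$. Differentiating the equation twice in the $e_1$ direction yields
\[
F^{ij}u_{ij,11} + F^{ij,rs}u_{ij,1}u_{rs,1} = f_{11},
\]
and the concavity of $F^{1/(k-l)}$, a consequence of the generalised Newton-MacLaurin inequality (Proposition \ref{prop2.4}), controls the cubic term $F^{ij,rs}u_{ij,1}u_{rs,1}$ favourably via the vanishing of $v_i(x_0)$. Combining the maximum condition $F^{ij}v_{ij}(x_0)\le 0$ with the ellipticity on $\Gamma_k$, the gradient bound from Section 4, and absorbing the contributions from $A|x|^2$ and $|f|_{C^2}$, one obtains $u_{11}(x_0) \le C$, giving the desired bound on $M$.

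In the boundary case $x_0\in\partial\Omega$, decompose $\xi_0 = \tau_0 + s\,\nu(x_0)$, with $\tau_0$ tangent and $s = \xi_0\cdot\nu$, so that
\[
u_{\xi_0\xi_0} = u_{\tau_0\tau_0} + 2s\,u_{\tau_0\nu} + s^2 u_{\nu\nu}.
\]
Differentiating $u_\nu = -u+\varphi$ once along $\tau_0$ gives
\[
u_{\tau_0\nu} = -u_{\tau_0} + \varphi_{\tau_0} - \tau_0^i u_j (\nu^j)_i,
\]
which is bounded by $|u|_{C^1}$, $|\varphi|_{C^1}$ and the geometry of $\Omega$. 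For the tangential piece $u_{\tau_0\tau_0}$, use the Hopf condition $v_\nu(x_0)\ge 0$: the normal derivative of the cross term in $v$ was chosen precisely so that the third-order term $u_{\xi_0\xi_0,\nu}$ produced by differentiating $u_{ij}\xi_0^i\xi_0^j$ along $\nu$ is cancelled once the tangentially-differentiated Neumann condition is substituted. The convexity of $\Omega$ enters here through the identity $\tau_0^i\tau_0^j(\nu^k)_{ij}\nu_k = -II(\tau_0,\tau_0) \le 0$, which supplies the correct sign for the surviving curvature terms. Rearranging, one extracts an inequality of the form $u_{\tau_0\tau_0}(x_0) \le C\bigl(1+\max_{\partial\Omega}|u_{\nu\nu}|\bigr)$, and the three contributions combine to give the estimate.

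The main obstacle is calibrating the auxiliary function $v$ so that the Hopf computation at a boundary maximum cleanly eliminates $u_{\xi_0\xi_0,\nu}$ through the tangentially-differentiated Neumann condition, while the constant $A$ simultaneously closes the interior maximum-principle step. The convexity hypothesis on $\Omega$ is indispensable at the boundary step: without it, the second fundamental form contribution acquires the opposite sign and the reduction fails, which is consistent with the paper requiring convexity beyond the strict $(k-1)$-convexity needed merely for admissibility.
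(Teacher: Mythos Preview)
Your outline follows the Lions--Trudinger--Urbas scheme and is broadly on the right track, but there are two genuine gaps and one misidentification.

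\textbf{Missing $|Du|^2$ in the test function.} The paper takes $v(x,\xi)=u_{\xi\xi}-v'(x,\xi)+K|x|^2+|Du|^2$, and the last term is not decorative. In the interior step, the second derivatives of your auxiliary piece $v'=a^lu_l+b$ produce $-2\widetilde F^{ij}(D_ia^l)u_{lj}$, a term of Hessian size. This cannot be absorbed by $A\sum_i\widetilde F^{ii}$: in diagonal coordinates it is of order $\sum_i\widetilde F^{ii}|u_{ii}|$, which is not controlled by $|u|_{C^1}$ data. The $|Du|^2$ term contributes exactly $2\widetilde F^{ij}u_{ki}u_{kj}$, and then the elementary inequality
\[
2\widetilde F^{ij}(D_ia^l)u_{lj}\le \widetilde F^{ij}u_{ki}u_{kj}+\widetilde F^{ij}(D_ia^l)(D_ja^l)
\]
closes the estimate, yielding $\widetilde F^{ij}v_{ij}>0$ everywhere and ruling out an interior maximum. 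Without this, your assertion ``one obtains $u_{11}(x_0)\le C$'' from $F^{ij}v_{ij}\le 0$ does not follow.

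\textbf{Role of the cross term and the missing $-u_lD\nu^l$.} You say the cross term is designed so that in the Hopf computation $u_{\xi_0\xi_0,\nu}$ is cancelled. That is not its function: when $\xi_0$ is tangential one has $\xi_0\cdot\nu=0$, so $v'\equiv 0$ there, and $u_{\xi_0\xi_0,\nu}$ is instead handled by differentiating the Neumann condition tangentially (equations (5.5)--(5.7) in the paper). The purpose of $v'$ is the \emph{non-tangential} case: writing $\xi_0=\alpha\tau+\beta\nu$, one wants the exact convex-combination identity $v(x_0,\xi_0)=\alpha^2v(x_0,\tau)+\beta^2v(x_0,\nu)$, which forces the maximum to be attained also at $\nu$. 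For this one needs $v'(x_0,\xi_0)=2\alpha\beta\,u_{\tau\nu}$ exactly; since the differentiated boundary condition gives $u_{\tau\nu}=-u_\tau+\varphi_\tau-u_lD_\tau\nu^l$, the correct auxiliary term is
\[
v'(x,\xi)=2(\xi\cdot\nu)\,\xi'\cdot\bigl(D\varphi-Du-u_lD\nu^l\bigr),
\]
and your version omits the last piece. With only a bounded error in the convex combination, the reduction to the normal direction acquires a factor $1/\beta^2$ and fails as $\beta\to 0$.

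\textbf{Where convexity enters.} Your identity $\tau_0^i\tau_0^j(\nu^k)_{ij}\nu_k=-II(\tau_0,\tau_0)$ is incorrect: differentiating $\nu^kD_i\nu^k=0$ gives $\nu^k(\nu^k)_{ij}=-(\nu^k)_i(\nu^k)_j$, so that quantity equals $-|D_{\tau_0}\nu|^2$, which is $\le 0$ for any domain and has nothing to do with convexity. Convexity is actually used in the tangential Hopf step: after controlling the mixed terms $u_{1i}$ for $i>1$ via criticality in the $\xi$-variable (paper's (5.8)--(5.10)), the surviving curvature contribution is $-2\bigl(D_{\xi_0}\nu^{\xi_0}\bigr)u_{\xi_0\xi_0}$, and it is the first-derivative inequality $D_{\xi_0}\nu^{\xi_0}=II(\xi_0,\xi_0)\ge 0$ that supplies the correct sign.
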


\begin{proof}
Since $\Omega$ is a $C^4$ domain, it is well known that there exists a small positive universal constant $0<{\mu} < \frac{1}{10}$ such that
$d(x) \in C^4(\overline{\Omega_{\mu}})$ and $\nu = - D d$ on $\partial \Omega$. We define $\widetilde{d} \in C^4(\overline{\Omega})$ such that
$\widetilde{d} =d$ in $\overline{\Omega_{\mu}}$ and denote
\begin{align*}
\nu = - D \widetilde{d}, \quad \text{ in } \Omega.
\end{align*}
In fact, $\nu$ is a $C^3(\overline{\Omega})$ extension of the outer unit normal vector field on $\partial \Omega$.

We assume $0 \in \Omega$, and consider the function
\begin{align}\label{5.3}
v(x, \xi) = u_{\xi \xi} -v' (x, \xi) + K |x|^2 +  |D u|^2,
\end{align}
where $v' (x, \xi) = 2 (\xi \cdot \nu) \xi' (D \varphi - D u- u_l D \nu^l)= a^l u_l +b$, $\xi' = \xi - (\xi \cdot \nu) \nu$, $ a^l = - 2 (\xi \cdot \nu) (\xi' \cdot D \nu^l)- 2 (\xi \cdot \nu) (\xi')^l$, $b = 2 (\xi \cdot \nu) (\xi' \cdot D \varphi)$, and
\begin{align*}
K =& [\frac{{C_n^l }} {{C_n^k }}]^{\frac{1} {{k - l}}} \Big[|D^2 (f^{\frac{1} {{k - l}}} )|_{C^0}+  |a^l|_{C^0} |D (f^{\frac{1} {{k - l}}} )|_{C^0} + 2|D (f^{\frac{1} {{k - l}}} )|_{C^0} | D u|_{C^0} \Big]  \\
& + {|D a^l|_{C^0}}^2 + |D^2 a^l|_{C^0} |D u|_{C^0} + |D^2 b|_{C^0}.
\end{align*}

Denote
\[
\widetilde{F}(D^2 u ) = [\frac{{\sigma _k (D^2 u )}}{{\sigma _l (D^2 u )}}]^{\frac{1}{k-l}},\quad \text{and } \quad \widetilde{F}^{ij}  = \frac{{\partial \widetilde{F}}}{{\partial u_{ij} }}.
\]

For any fixed $\xi \in \mathbb{S}^{n-1}$, we have
\begin{align}
\widetilde{ F}^{ij} v_{ij}  = &\sum\limits_{i j= 1}^n {\widetilde{ F}^{ij} u_{ij\xi \xi }}-  \sum\limits_{i j= 1}^n {\widetilde{F}^{ij} [a^l u_{ijl} + 2 D_i a^l u_{jl} + D_{ij} a^l u_l +b_{ij}]} \notag \\
 &+ 2K\sum\limits_{i = 1}^n {\widetilde{F}^{ii} }  + \sum\limits_{i j= 1}^n {\widetilde{F}^{ij} [2u_{ijk}u_k + 2u_{ki} u_{kj}]} \notag \\
=& (f^{\frac{1} {{k - l}}} )_{\xi \xi }  - \widetilde{F}^{ij,kl} u_{ij\xi \xi }  - a^l (f^{\frac{1} {{k - l}}} )_{l}- \sum\limits_{i j= 1}^n {\widetilde{F}^{ij} [ 2 D_i a^l u_{jl} + D_{ij} a^l u_l +b_{ij}]} \notag \\
&+ 2K\sum\limits_{i = 1}^n {\widetilde{F}^{ii} }  + 2(f^{\frac{1} {{k - l}}} )_{k} u_k +  2\sum\limits_{i j= 1}^n {\widetilde{F}^{ij} u_{ki} u_{kj}} \notag \\
\geq& (f^{\frac{1} {{k - l}}} )_{\xi \xi }  - a^l (f^{\frac{1} {{k - l}}} )_{l}- \sum\limits_{i j= 1}^n {\widetilde{F}^{ij} [  D_i a^l D_j a^l + D_{ij} a^l u_l +b_{ij}]} \notag \\
 &+ 2K\sum\limits_{i = 1}^n {\widetilde{F}^{ii} }  + 2(f^{\frac{1} {{k - l}}} )_{k} u_k \notag \\
\geq& (f^{\frac{1} {{k - l}}} )_{\xi \xi }  - a^l (f^{\frac{1} {{k - l}}} )_{l}- 2|D (f^{\frac{1} {{k - l}}} )| | D u| \notag \\
&+ [2K -  |D a^l|^2 - |D^2 a^l| |D u| - |D^2 b|] \sum\limits_{i = 1}^n {\widetilde{F}^{ii} } \notag \\
\geq& (f^{\frac{1} {{k - l}}} )_{\xi \xi }  - a^l (f^{\frac{1} {{k - l}}} )_{l}- 2|D (f^{\frac{1} {{k - l}}} )| | D u| \notag \\
&+ [2K -  |D a^l|^2 - |D^2 a^l| |D u| - |D^2 b|] \cdot [\frac{{C_n^k }} {{C_n^l }}]^{\frac{1} {{k - l}}}  \notag \\
 >& 0, \notag
\end{align}
where we have used
\begin{align}\label{5.4}
 2\sum\limits_{i j= 1}^n {\widetilde{F}^{ij} D_i a^l u_{jl}} \leq \sum\limits_{i j= 1}^n {\widetilde{F}^{ij} u_{ki} u_{kj}} +\sum\limits_{i j= 1}^n {\widetilde{F}^{ij}  D_i a^l D_j a^l}.
\end{align}
So $\max\limits_\Omega v(x, \xi)$ attains at a point on $\partial \Omega$. Hence $\max\limits_{\Omega \times \mathbb{S}^{n-1}} v(x, \xi)$ attains at some point $x_0 \in \partial \Omega$ and some direction $\xi_0 \in \mathbb{S}^{n-1}$.

Case a: $\xi_0$ is tangential to $\partial \Omega$ at $x_0$.

We directly have $\xi_0 \cdot \nu =0$, $v'(x_0, \xi_0) =0$, and $u_{\xi_0 \xi_0} (x_0)>0$. In the following, the calculations are at the point $x_0$ and $\xi = \xi_0$.

From the boundary condition, we have
\begin{align}\label{5.5}
u_{li}\nu^l =& [c^{ij} + \nu^i \nu^j ]\nu^l u_{lj}\notag  \\
=& c^{ij} [D_j(\nu^l u_{l}) -D_j \nu^l u_l ] + \nu^i \nu^j \nu^l u_{lj} \notag \\
=& - c^{ij} u_j + c^{ij} D_j \varphi - c^{ij}u_l D_j \nu^l + \nu^i \nu^j \nu^l u_{lj}.
\end{align}
So it follows that
\begin{align}\label{5.6}
u_{lip} \nu^l =& [c^{pq} + \nu^p \nu^q ] u_{liq} \nu^l \notag \\
=& c^{pq} [D_q (u_{li} \nu^l)- u_{li} D_q \nu^l] + \nu^p \nu^q u_{liq} \nu^l \notag \\
=& c^{pq} D_q (- c^{ij} u_j + c^{ij} D_j \varphi - c^{ij}u_lD_j \nu^l + \nu^i \nu^j \nu^l u_{lj}) \notag \\
&-c^{pq} u_{li} D_q \nu^l + \nu^p \nu^q \nu^l u_{liq},
\end{align}
then we obtain
\begin{align}\label{5.7}
u_{\xi_0 \xi_0 \nu} =& \sum_{ip l=1}^n  \xi_0^i \xi_0^p u_{lip} \nu^l \notag \\
=& \sum_{ip=1}^n  \xi_0^i \xi_0^p[c^{pq}D_q(- c^{ij} u_j + c^{ij} D_j \varphi-c^{ij}u_lD_j \nu^l + \nu^i \nu^j \nu^l u_{lj}) \notag \\
&\qquad \qquad - c^{pq}u_{li} D_q \nu^l + \nu^p \nu^q \nu^l u_{liq} ]\notag \\
=& \sum_{i=1}^n  \xi_0^i \xi_0^q[D_q(- c^{ij} u_j +c^{ij} D_j \varphi-c^{ij}u_lD_j \nu^l + \nu^i \nu^j \nu^l u_{lj}) - u_{li} D_q \nu^l ]\notag \\
=& -\xi_0^i \xi_0^q  [c^{ij} u_{jq} -D_q c^{ij} u_{j}] +  \xi_0^i \xi_0^q D_q(c^{ij} D_j \varphi ) \notag \\
&- \xi_0^i \xi_0^q D_q( c^{ij}D_j \nu^l ) u_l - \xi_0^j \xi_0^q u_{lq} D_j \nu^l + \xi_0^i \xi_0^q D_q \nu^i u_{\nu \nu}  - \xi_0^i \xi_0^q u_{lq} D_i \nu^l\notag \\
\leq& -u_{\xi_0 \xi_0} - 2 \xi_0^i u_{l \xi_0} D_i \nu^l + C_{10} + C_{10}|D u| + C_{10}| u_{\nu \nu}|.
\end{align}
 We assume $\xi_0 = e_1$, it is easy to get the bound for $u_{1i}(x_0)$ for $i > 1$ from the maximum
of $v(x, \xi)$ in the $\xi_0$ direction. In fact, we can assume $\xi(t) = \frac{(1,t,0, \cdots,0)}{\sqrt {1+t^2}}$.
Then we have
\begin{align}\label{5.8}
  0 =& \frac{{dv(x_0 ,\xi (t))}} {{dt}}|_{t = 0}  \notag \\
   =& 2u_{ij} (x_0 )\frac{{d\xi ^i (t)}} {{dt}}|_{t = 0} \xi ^j (0) - \frac{{dv'(x_0 ,\xi (t))}}
{{dt}}|_{t = 0}  \notag \\
   =& 2u_{12} (x_0 ) - 2\nu ^2 (D _1 \varphi -u_1 - u_l D _1 \nu ^l ),
\end{align}
so
\begin{align}\label{5.9}
|u_{12} (x_0 )|= |\nu ^2 (D _1 \varphi -u_1 - u_l D _1 \nu ^l )| \leq C_{11}+ C_{11}|D u|.
\end{align}
Similarly, we have for all $i > 1$,
\begin{align}\label{5.10}
|u_{1i} (x_0 )|\leq  C_{11}+ C_{11}|D u|.
\end{align}
so by $ \{D_i \nu^l\} \geq 0$, we have
\begin{align}\label{5.11}
u_{\xi_0 \xi_0 \nu} \leq& - u_{\xi_0 \xi_0} - D_1 \nu^1 u_{\xi_0 \xi_0}  + C_{12} (1+ | u_{\nu \nu}|) \notag \\
\leq& - u_{\xi_0 \xi_0} + C_{12} (1+ | u_{\nu \nu}|).
\end{align}
On the other hand, we have from the Hopf lemma,  and \eqref{5.5},
\begin{align}\label{5.12}
0 \leq& v_\nu (x_0, \xi_0)\notag \\
=& u_{\xi_0 \xi_0 \nu} -  a^l u_{l\nu}  -  D_\nu a^l u_{l}- b_\nu + 2 K (x \cdot \nu) \notag \\
\leq& -u_{\xi_0 \xi_0} + C_{12} (1+ | u_{\nu \nu}|) + C_{13}.
\end{align}
Then we get
\begin{align}\label{5.13}
u_{\xi_0 \xi_0}(x_0) \leq  (C_{12} + C_{13}) (1+ | u_{\nu \nu}|),
\end{align}
and
\begin{align}\label{5.14}
 \max\limits_{\Omega \times \mathbb{S}^{n-1}} |u_{\xi \xi}(x)| \leq& (n-1) \max\limits_{\Omega \times \mathbb{S}^{n-1}} u_{\xi \xi}(x) \notag \\
 \leq& (n-1)[ \max\limits_{\Omega \times \mathbb{S}^{n-1}} v(x, \xi)  + C_{14}] = (n-1) [v(x_0, \xi_0) + C_{14} ]\notag \\
 \leq& (n-1) [u_{\xi_0 \xi_0}(x_0) +2 C_{14}] \notag \\
\leq& C_{15} (1+ | u_{\nu \nu}|).
\end{align}

Case b: $\xi_0$ is non-tangential.

We can directly have $\xi_0 \cdot \nu \ne 0$. We can find a tangential vector $\tau$, such that $\xi_0 = \alpha \tau + \beta \nu$, with $\alpha = \xi_0 \cdot \tau \geq 0$, $\beta = \xi_0 \cdot \nu \ne 0$, $\alpha ^2 + \beta ^2 =1$ and $\tau \cdot \nu =0$. Then we have

\begin{align}\label{5.15}
u_{\xi_0 \xi_0}(x_0) =& \alpha^2 u_{\tau \tau}(x_0) + \beta^2u_{\nu \nu}(x_0)+ 2 \alpha \beta u_{\tau \nu}(x_0) \notag \\
=& \alpha^2 u_{\tau \tau}(x_0) + \beta^2u_{\nu \nu}(x_0)+ 2  (\xi_0 \cdot \nu)[\xi_0 - (\xi_0 \cdot \nu) \nu] [D \varphi - D u- u_l D \nu^l],
\end{align}
hence
\begin{align}\label{5.16}
v(x_0, \xi_0) = \alpha^2 v(x_0, \tau) + \beta^2 v(x_0, \nu).
\end{align}
From the definition of $v(x_0, \xi_0)$, we know
\begin{align}\label{5.17}
v(x_0, \xi_0)= v(x_0, \nu),
\end{align}
and
\begin{align}\label{5.18}
u_{\xi_0 \xi_0}(x_0) \leq v(x_0, \xi_0) + C_{14} = v(x_0, \nu) +C_{14}  \leq | u_{\nu \nu}|+ 2C_{14}.
\end{align}
Similarly as \eqref{5.14}, we can prove \eqref{5.2}.
\end{proof}

\subsection{Lower estimate of double normal second derivatives on boundary}

\begin{lemma} \label{lem5.3}
Suppose that $\Omega \subset \mathbb{R}^n$ is a $C^3$ convex and strictly $(k-1)$-convex domain, $f \in C^2(\overline{\Omega})$ is a positive function, $\varphi \in C^3(\partial \Omega)$ and $u \in C^3(\Omega)\cap C^2(\overline \Omega)$ is the $k$-admissible solution of Hessian quotient equation \eqref{1.2}, then we have
\begin{align}\label{5.19}
\min_{\partial \Omega} u_{\nu \nu}  \geq - C_{15},
\end{align}
where $C_{15}$ is a positive constants depending on $n$, $k$,$l$, $\Omega$, $|u|_{C^1}$, $\inf f$, $|f|_{C^2}$ and $|\varphi|_{C^3}$.
\end{lemma}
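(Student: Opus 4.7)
I plan to adapt the barrier-and-Hopf argument of Lions--Trudinger--Urbas \cite{LTU86} and Ma--Qiu \cite{MQ15} to the Hessian quotient setting, with Lemma \ref{lem2.5} supplying the structural input specific to $\sigma_k/\sigma_l$. Let $\beta$ be a smooth extension of $\nu$ to $\overline\Omega$ (for instance $\beta = -Dd$ in a tubular neighborhood, cut off smoothly further inside) and let $\widetilde\varphi\in C^3(\overline\Omega)$ extend $\varphi$. Set
$$w(x):=u_\beta(x)+u(x)-\widetilde\varphi(x),$$
so that the Neumann condition gives $w\equiv 0$ on $\partial\Omega$. Using \eqref{4.6} a direct calculation shows $w_\nu(x_0)=u_{\nu\nu}(x_0)+O(1)$ at any boundary point $x_0$, with the $O(1)$ depending only on $|u|_{C^1}$ and $|\varphi|_{C^1}$. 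It therefore suffices to produce a uniform lower bound for $w_\nu$ on $\partial\Omega$.

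Fix $x_0\in\partial\Omega$ where $u_{\nu\nu}|_{\partial\Omega}$ attains its minimum; we may assume $u_{\nu\nu}(x_0)<0$, else there is nothing to prove. On the tubular neighborhood $U_\rho:=\Omega\cap\{d<\rho\}$ for small $\rho>0$, consider
$$\Phi(x):=-w(x)+A\,d(x)-B\,d(x)^2,$$
with constants $A,B>0$ to be chosen. Then $\Phi\equiv 0$ on $\partial\Omega$ and, for $A\rho$ large compared to $|w|_{C^0(\overline\Omega)}$, also $\Phi\geq 0$ on $\{d=\rho\}\cap\overline\Omega$. The plan is to choose $A,B,\rho$ so that $L\Phi\leq 0$ throughout $U_\rho$, where $L:=F^{ij}\partial_i\partial_j$ with $F=\sigma_k/\sigma_l$. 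The minimum principle would then give $\Phi\geq 0$ in $U_\rho$, and Hopf's lemma at $x_0$ (together with $d_\nu(x_0)=-1$) would yield $\Phi_\nu(x_0)<0$, which unpacks to $w_\nu(x_0)>-A$ and so $u_{\nu\nu}(x_0)\geq -A-O(1)=-C_{15}$.

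The technical heart is verifying $L\Phi\leq 0$. Using $F^{ij}u_{ijk}=f_k$ (from differentiating the equation) and $F^{ij}u_{ij}=(k-l)f$ (homogeneity), one computes
$$Lw=\beta^k f_k+(k-l)f+F^{ij}\beta^k_{ij}u_k+2F^{ij}\beta^k_i u_{kj}-F^{ij}\widetilde\varphi_{ij},$$
$$L(Ad-Bd^2)=(A-2Bd)F^{ij}d_{ij}-2BF^{ij}d_id_j.$$
Convexity of $\Omega$ gives $F^{ij}d_{ij}\leq 0$ near $\partial\Omega$ (since $d_{\alpha n}=0$ from $|Dd|^2=1$ and $d_{\alpha\beta}$ is essentially the negative of the second fundamental form), so the $A$-contribution is non-positive. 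The dangerous piece of $Lw$ is $2F^{ij}\beta^k_i u_{kj}$, which Cauchy--Schwarz bounds by
$$|2F^{ij}\beta^k_i u_{kj}|\leq \varepsilon F^{ij}u_{ki}u_{kj}+\varepsilon^{-1}C\sum_i F^{ii},$$
and the term $\varepsilon F^{ij}u_{ki}u_{kj}=\varepsilon\sum_i F^{ii}\lambda_i^2$ is potentially large when some eigenvalue of $D^2u$ is very negative. Its counterweight is $-2BF^{ij}d_id_j$, which near $\partial\Omega$ behaves like $-2BF^{nn}$ in the local frame $e_n=\nu$. Here Lemma \ref{lem2.5} is decisive: in the only regime where the lower bound on $u_{\nu\nu}$ is nontrivial, namely when $D^2u$ has a negative eigenvalue, inequality \eqref{2.7} forces $F^{nn}\geq c\sum_i F^{ii}$, so choosing $B$ large enough allows $-2BF^{nn}$ to absorb the Cauchy--Schwarz error; then $A$ is chosen large (using strict $(k-1)$-convexity of $\partial\Omega$ to keep the tangential Garding sum $F^{\alpha\beta}\kappa_{\alpha\beta}$ positive) to handle the remaining lower-order and $\sum F^{ii}$ terms. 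The main obstacle I foresee is performing this absorption \emph{uniformly} over $U_\rho$ rather than just at $x_0$: one must verify that a single choice of $A,B$ works simultaneously at points where $D^2u$ has a negative eigenvalue (where Lemma \ref{lem2.5} applies directly) and at points where it does not (where standard $\sigma_k/\sigma_l$-ellipticity and the $A$-term should suffice), and this patching is the delicate balance underlying the proof.
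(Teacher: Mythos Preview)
Your barrier has coefficients $A,B$ that are \emph{independent of} $M:=-\min_{\partial\Omega}u_{\nu\nu}$, and this is where the argument breaks. After diagonalizing $D^2u$ at a point of $U_\rho$, the term $2F^{ij}\beta^k_iu_{kj}$ becomes $2\sum_iF^{ii}(-d_{ii})u_{ii}$; the indices with $u_{ii}<0$ contribute a positive quantity of order $\kappa_{\max}\,|u_{ii}|\,F^{ii}$, and by Lemma~\ref{lem5.2} this can be as large as $C_9(1+M)\sum_iF^{ii}$. Your proposed counterweight is $-2B\sum_iF^{ii}d_i^2$; even granting $\sum_iF^{ii}d_i^2\gtrsim\sum_iF^{ii}$ (which itself is not automatic), this is only $O(B)\sum_iF^{ii}$ and cannot absorb an $O(M)$ term. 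Your appeal to Lemma~\ref{lem2.5} is also misdirected: that lemma says $F^{ii}\geq c\sum_jF^{jj}$ for an index $i$ with $u_{ii}<0$, but gives no information about the ``normal'' index (there is no reason the negative eigendirection of $D^2u$ should have $d_i^2$ large). The obstacle you flag at the end is therefore not a patching issue but a structural one.

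The paper's proof repairs both defects simultaneously. First, it inserts $M$ into the barrier, taking $P=(1+\beta d)\bigl[-u_jd_j+u-\varphi\bigr]+(A+\tfrac12M)h$ with $h=-d+d^2$, so that the good term from $h$ is already of size $(A+M)\sum_iF^{ii}$ via Lemma~\ref{lem5.4}. Second, rather than verifying a differential inequality throughout $U_\rho$, it argues by contradiction at a putative interior maximum $x_0$ of $P$ and exploits the \emph{first-order condition} $P_i(x_0)=0$: splitting indices into $G=\{i:\beta d_i^2\geq 1/n\}$ and its complement, this forces $u_{i_0i_0}\leq -\tfrac15(A+M)$ for every $i_0\in G$ (in particular for one with $d_{i_0}^2\geq 1/n$). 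Now Lemma~\ref{lem2.5} applies to \emph{this} $i_0$, and the term $-2\beta F^{i_0i_0}u_{i_0i_0}d_{i_0}^2\gtrsim \tfrac{\beta}{n}c_6(A+M)\sum_iF^{ii}$ beats the bad $C_9(1+M)\sum_iF^{ii}$ contribution once $\beta$ is chosen large. The first-order condition is what links ``direction with $d_i^2$ large'' to ``direction with $u_{ii}$ large negative,'' and there is no substitute for it in a pure subsolution argument.
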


To prove Lemma \ref{lem5.3} and Lemma \ref{lem5.5}, we need the following lemma.
\begin{lemma} \label{lem5.4}
Suppose $\Omega \subset \mathbb{R}^n$ is a $C^2$ convex and strictly $(k-1)$-convex domain, $f \in C^2(\overline{\Omega})$ is a positive function, and $u \in C^2(\Omega)$ is the $k$-admissible solution of Hessian quotient equation
\begin{align}
\frac{\sigma _k (D^2 u)}{\sigma _l (D^2 u)} = f(x),  \quad \text{in} \quad \Omega \subset \mathbb{R}^n. \notag
\end{align}
Denote $ F^{ij}  = \frac{{\partial \frac{{\sigma _k (D^2 u )}}{{\sigma _l (D^2 u )}}}}{{\partial u_{ij} }}$, and
\begin{align}\label{5.20}
h(x) = -d(x) + d^2(x),
\end{align}
where $d(x) =dist(x, \partial \Omega)$ is the distance function of $\Omega$. Then
\begin{align}\label{5.21}
 \sum\limits_{ij=1}^n F^{ij} h_{ij} \geq c_4 (\sum\limits_{i=1}^n F^{ii} + 1), \quad  \text{ in } \Omega_\mu,
\end{align}
where $\Omega_\mu = \{ x \in \Omega: d(x) < \mu\}$ for a small universal constant $\mu$ and $c_4$ is a positive constant depending only on $n$, $k$, $l$, $\Omega$ and $\inf f$.
\end{lemma}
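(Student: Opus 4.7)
The plan is to analyze $F^{ij}h_{ij}$ in an orthonormal frame adapted to $d$, then combine two independent lower bounds: a uniform positive constant coming from the concavity and $1$-homogeneity of $\widetilde F := (\sigma_k/\sigma_l)^{1/(k-l)}$ on $\Gamma_k$, and a multiple of $\sum_i F^{ii}$ obtained by a case analysis on the size of $F^{nn}$. Since $|Dd|^2 = 1$ on $\Omega_\mu$, differentiating gives $d_i d_{ij} = 0$, so $D^2 d$ annihilates $Dd$. Computing $h_{ij} = (2d-1)d_{ij} + 2 d_i d_j$ yields
$$F^{ij}h_{ij} = (2d-1)\, F^{ij}d_{ij} + 2\, F^{ij}d_i d_j.$$
At each $x \in \Omega_\mu$ I would fix an orthonormal frame with $e_n = -Dd(x)$ and with the tangential block of $D^2 d$ diagonal; setting $\tilde\kappa_\alpha := \kappa_\alpha/(1-\kappa_\alpha d)$ for $\kappa_\alpha$ the principal curvatures at the nearest boundary point, one has $d_{nn} = d_{n\alpha} = 0$, $d_{\alpha\alpha} = -\tilde\kappa_\alpha$, and
$$F^{ij}h_{ij} = (1-2d)\sum_{\alpha=1}^{n-1} F^{\alpha\alpha}\tilde\kappa_\alpha + 2 F^{nn}.$$
Convexity of $\Omega$ gives $\tilde\kappa_\alpha \geq 0$, and strict $(k-1)$-convexity gives $\tilde\kappa = (\tilde\kappa_1,\ldots,\tilde\kappa_{n-1}) \in \Gamma_{k-1}$ with $\sigma_{k-1}(\tilde\kappa) \geq c_\Omega > 0$ uniformly for $\mu$ small.

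For the constant piece of the lower bound, note that in this frame the eigenvalues of $D^2 h$ are $\bigl((1-2d)\tilde\kappa_1,\ldots,(1-2d)\tilde\kappa_{n-1},2\bigr)$. The expansion $\sigma_j(D^2 h) = (1-2d)^j\sigma_j(\tilde\kappa) + 2(1-2d)^{j-1}\sigma_{j-1}(\tilde\kappa) \geq 2(1-2d)^{j-1}\sigma_{j-1}(\tilde\kappa)$ forces $D^2 h \in \Gamma_k$ and $\widetilde F(D^2 h) \geq c(\Omega,n,k,l) > 0$, provided $\mu < 1/4$. Since $\widetilde F$ is concave and $1$-homogeneous on $\Gamma_k$, the tangent-plane inequality $\widetilde F^{ij}(M) N_{ij} \geq \widetilde F(N)$ for $M,N\in\Gamma_k$, applied at $M = D^2 u$ and $N = D^2 h$, combined with the identity $F^{ij} = (k-l)f^{(k-l-1)/(k-l)}\widetilde F^{ij}$ and $\inf_\Omega f > 0$, produces a uniform positive bound $F^{ij}h_{ij} \geq c_5 > 0$.

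The remaining step, and the main difficulty, is to upgrade this constant bound to a multiple of $\sum_i F^{ii}$. I plan to do this by a case split on the size of $F^{nn}$. If $F^{nn} \geq \tfrac{1}{2n}\sum_i F^{ii}$, then the term $2F^{nn}$ alone dominates $\tfrac{1}{n}\sum_i F^{ii}$, and averaging with the constant bound $c_5$ delivers $F^{ij}h_{ij}\geq c_4\bigl(\sum_i F^{ii}+1\bigr)$. In the complementary regime $F^{nn} < \tfrac{1}{2n}\sum_i F^{ii}$, the tangential trace $\sum_\alpha F^{\alpha\alpha}$ dominates $\sum_i F^{ii}$, and one must show that the weighted sum $\sum_\alpha F^{\alpha\alpha}\tilde\kappa_\alpha$ retains a fixed fraction of $\sum_\alpha F^{\alpha\alpha}$. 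This is the delicate point, since strict $(k-1)$-convexity permits up to $n-k$ of the $\tilde\kappa_\alpha$ to vanish, so a naive bound on $\min_\alpha\tilde\kappa_\alpha$ is unavailable; the plan is to apply the concavity-homogeneity inequality a second time to a small perturbation of $-D^2 d$ placed into $\Gamma_k$ (using again $\sigma_{k-1}(\tilde\kappa)\geq c_\Omega$), together with the generalized Newton-MacLaurin inequality of Proposition \ref{prop2.4}, so that the resulting lower bound scales with $\sum_\alpha F^{\alpha\alpha}$. Combining the two regimes yields the desired estimate.
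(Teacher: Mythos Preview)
Your constant lower bound via concavity and $1$-homogeneity of $\widetilde F$ is correct and is a clean alternative to the paper's corresponding step. The gap is in your case (b). The tangent-plane inequality $\widetilde F^{ij}(D^2u)\,N_{ij}\ge \widetilde F(N)$ always produces a number depending only on $N$, never something proportional to $\sum_i F^{ii}(D^2u)$; scaling $N$ or perturbing $-D^2 d$ into $\Gamma_k$ does not change this, and Newton--MacLaurin only relates different quotients of $\sigma_m$'s evaluated at the \emph{same} matrix, so it cannot convert a constant into $c\sum_i F^{ii}$ either. Concretely, in your frame the potentially vanishing curvatures $\tilde\kappa_k,\dots,\tilde\kappa_{n-1}$ may sit exactly in the directions where the diagonal entries $F^{\alpha\alpha}$ (which are \emph{not} eigenvalues of $F$ here) are largest, so $\sum_\alpha F^{\alpha\alpha}\tilde\kappa_\alpha$ can fail to dominate $\sum_\alpha F^{\alpha\alpha}$ without further structural input.

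The paper's route supplies precisely this missing structure by working instead in the frame where $D^2u$ is diagonal, so that the $F^{ii}$ \emph{are} the eigenvalues of $F$ and are explicitly $\frac{\sigma_{k-1}(\lambda|i)\sigma_l(\lambda)-\sigma_k(\lambda)\sigma_{l-1}(\lambda|i)}{\sigma_l(\lambda)^2}$. Since $D^2h\ge \kappa_{\min}\,P$ for a rank-$k$ projection $P$ (this matrix inequality survives the change of frame), one is reduced to bounding $\sum_i F^{ii}P_{ii}$ from below; in the worst case this picks out the $k$ smallest eigenvalues $F^{11},\dots,F^{kk}$ of $F$. The decisive algebraic fact is then the Lin--Trudinger inequality $\sigma_{k-1}(\lambda|k)\ge c(n,k)\,\sigma_{k-1}(\lambda)$ (and likewise for $\sigma_l$), which forces
\[
F^{kk}\ \ge\ c\,\frac{\sigma_{k-1}(\lambda)}{\sigma_l(\lambda)}\ \ge\ \frac{c}{n-k+1}\sum_{i=1}^n F^{ii}.
\]
This is the ingredient your case (b) is missing: a statement that the $k$-th smallest eigenvalue of $F$ already controls the full trace. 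Once you invoke Lin--Trudinger (or an equivalent inequality), your argument closes; without it the case split cannot be completed.
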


\begin{proof}
We know from the classic book \cite{GT} section 14.6 that the distance function $d$ is $C^4$ in
$\Omega_\mu = \{ x \in \Omega: 0 < d(x) < \mu\}$ for some constant $\mu \in (0, \frac{1}{10})$ small depending on $\Omega$. Also it holds
\begin{align}\label{5.22}
|D d| =1, \quad \text{ in } \overline{\Omega_\mu}; \quad  -D d = \nu, \quad \text{ on } \partial \Omega_\mu.
\end{align}
For any $x_0 \in \Omega_\mu$, there is a $y_0 \in \partial \Omega$ such that $d(x_0) = |x_0 - y_0|$. In the principal coordinate system (see \cite{GT} section 14.6), we have
\begin{align}
\label{5.23}&-D d(x_0) = \nu (y_0) =(0, \cdots, 0, 1); \\
\label{5.24}&-D^2 d(x_0) = \textrm{diag} \{ \frac{\kappa_1(y_0)}{1- \kappa_1(y_0)d(x_0)}, \cdots, \frac{\kappa_{n-1}(y_0)}{1- \kappa_{n-1}(y_0)d(x_0)}, 0  \},
\end{align}
where $\kappa_1(y_0), \cdots, \kappa_{n-1}(y_0)$ are the principal curvature of $\partial \Omega$ at $y_0$. Since $\Omega$ is convex and strictly $(k-1)$-convex, then there exist two positive constants $\kappa_{min}<1$ and $\kappa_{max}$ depending only on $\Omega$ and $\mu$ such that
\begin{align}\label{5.25}
\kappa_{min} \textrm{diag}\{ \underbrace {1, \cdots, 1,}_{k - 1} 0, \cdots, 0\} \leq -D^2 d(x_0) \leq  \kappa_{max} \textrm{diag} \{ 1, \cdots, 1, 0 \},
\end{align}
in the principal coordinate system. Hence
\begin{align}\label{5.26}
\kappa_{min} \textrm{diag}\{ \underbrace {1, \cdots, 1,}_{k - 1} 0, \cdots, 0, 1\} \leq D^2 h(x_0) \leq  (\kappa_{max}+1) \textrm{diag} \{ 1, \cdots, 1, 1 \},
\end{align}
in the principal coordinate system.

If $D^2 u(x_0)$ is diagonal, and denote $\lambda = (\lambda_1, \cdots, \lambda_n)$ with $\lambda_i = u_{ii}$. We also assume $\lambda_1 \geq \lambda_2 \geq  \cdots \geq \lambda_n$.  We can easily get
\begin{align}
F^{ii} =& \frac{{\sigma _{k-1} (\lambda|i)\sigma _{l} (\lambda) - \sigma _{k} (\lambda |i)\sigma _{l-1} (\lambda)  }}{{\sigma _l^2 (\lambda)}} \notag \\
\geq& (1 - \frac{l} {k}\frac{{n - k}} {{n - l}})\frac{{\sigma _{k-1} (\lambda |i)\sigma _{l} (\lambda |i) }}{{\sigma _l^2 (\lambda )}}. \notag
\end{align}
Then
\begin{align}
 \sum\limits_{i,j=1}^n F^{ij} h_{ij} \geq& (1 - \frac{l} {k}\frac{{n - k}} {{n - l}}) \sum\limits_{i=1}^n \frac{{\sigma _{k-1} (\lambda |i)\sigma _{l} (\lambda |i) }}{{\sigma _l^2 (\lambda)}}h_{ii}  \notag \\
\geq& \kappa_{min} \frac{{\sigma _{k-1} (\lambda |k)\sigma _{l} (\lambda |k) }}{{\sigma _l^2 (\lambda)}}, \notag
\end{align}
From Lin-Trudinger \cite{LT94}, we know $\sigma_{k-1}(\lambda |k) \geq c(n,k) \sigma_{k-1}(\lambda)$ for some positive constant $c(n,k)$ depending only on $n$ and $k$.
So
\begin{align}\label{5.27}
 \sum\limits_{i,j=1}^n F^{ij} h_{ij} \geq&\kappa_{min} \frac{{\sigma _{k-1} (\lambda |k)\sigma _{l} (\lambda |k) }}{{\sigma _l^2 (\lambda)}} \notag \\
\geq& \kappa_{min} c(n,k) c(n,l) \frac{{\sigma _{k-1} (\lambda)\sigma _{l} (\lambda ) }}{{\sigma _l^2 (\lambda)}} \notag \\
\geq& c \sum\limits_{i=1}^n F^{ii}.
\end{align}
Also, we have
\begin{align}\label{5.28}
\sum\limits_{i=1}^n F^{ii} =& \frac{{(n-k+1)\sigma _{k-1} (D^2 u )\sigma _{l} (D^2 u ) - (n-l+1) \sigma _{k} (D^2 u )\sigma _{l-1} (D^2 u )  }}{{\sigma _l^2 (D^2 u )}} \notag \\
\geq&  \frac{k-l}{k}(n-k+1)\frac{{\sigma _{k-1} (D^2 u )}}{{\sigma _l (D^2 u )}}
 \notag \\
\geq&  c(n,k,l) [\frac{{\sigma _{k} (D^2 u )}}{{\sigma _l (D^2 u )}}]^{\frac{k-l-1}{k-l}} = c(n,k,l) f ^{\frac{k-l-1}{k-l}} \geq c_5 >0.
\end{align}
Hence from \eqref{5.27} and \eqref{5.28}, we get
\begin{align}
 \sum\limits_{ij=1}^n F^{ij} h_{ij} \geq& c \sum\limits_{i=1}^n F^{ii} \geq \frac{c}{2} \sum\limits_{i=1}^n F^{ii} + \frac{c}{2} c_5 \geq c_4 (\sum\limits_{i=1}^n F^{ii} + 1), \quad  \text{ in } \Omega_\mu. \notag
\end{align}
\end{proof}

Now we come to prove Lemma \ref{lem5.3}.
\begin{proof}

Firstly, we assume $ \min\limits_{\partial \Omega} u_{\nu \nu} < 0$, otherwise there is nothing to prove. Also, if $- \min\limits_{\partial \Omega} u_{\nu \nu} < \max\limits_{\partial \Omega} u_{\nu \nu}$, that is $\max\limits_{\partial \Omega} |u_{\nu \nu}|= \max\limits_{\partial \Omega} u_{\nu \nu}$, we can easily get from Lemma \ref{lem5.4}
\begin{align}
- \min_{\partial \Omega} u_{\nu \nu}  < \max_{\partial \Omega} u_{\nu \nu} \leq C_{18}.  \notag
\end{align}
In the following, we assume $- \min\limits_{\partial \Omega} u_{\nu \nu} \geq \max\limits_{\partial \Omega} u_{\nu \nu}$, that is $\max\limits_{\partial \Omega} |u_{\nu \nu}|= -\min\limits_{\partial \Omega} u_{\nu \nu}$. Denote $M = - \min\limits_{\partial \Omega} u_{\nu \nu} >0$ and let $z_0 \in \partial \Omega$ such that $\min\limits_{\partial \Omega} u_{\nu \nu} =u_{\nu \nu} (z_0)$.

Motivated by Ma-Qiu \cite{MQ15}, we consider the test function
\begin{align}\label{5.29}
P(x) = ( 1 + \beta d)[D u \cdot (-D d) + u(x)  - \varphi (x) ] + (A + \frac{1} {2}M)h(x),
\end{align}
where
\begin{align}\label{5.30}
\beta =&\max \{\frac{1}{\mu}, 5 n (2\kappa_{max}+\frac{1}{n}) \frac{C_9}{c_6} \}, \\
A=& \max\{A_1, A_2, \frac{C_{17} +\frac{2 (k-l)}{n} f }{c_4} \}.
\end{align}

It is easy to know that $P \leq 0$ on $\partial \Omega_\mu$. Precisely, on $\partial \Omega$, we have $d = h= 0$, and $- D d = \nu$, so we can get
\begin{align}\label{5.32}
P(x) =0,  \quad \text{ on } \partial \Omega.
\end{align}
On $\partial \Omega_\mu \setminus \partial \Omega$, we have $d = \mu$, and
\begin{align}\label{5.33}
P(x) \leq& (1+ \beta \mu) [|D u|  +  |u| + |\varphi|]  + (A + \frac{1} {2}M) [-\mu + \mu^2] \notag \\
\leq& (1+ \beta \mu) [|D u|  +  |u| + |\varphi|]  - \frac{9}{10} \mu A < 0,
\end{align}
since $A \geq \frac{10}{9}(\frac{1}{\mu} + \beta) [|D u|_{C^0}  +  |u|_{C^0} + |\varphi|_{C^0}] +1 =:A_1$. In the following, we want to prove $P$ attains its maximum only on $\partial \Omega$. Then we can get
\begin{align}\label{5.34}
0 \leq P_\nu (z_0)  =&  [u_{\nu \nu}(z_0) - \sum\limits_j {u_j d_{j\nu} }   + u_\nu  - \varphi_\nu] + (A + \frac{1} {2}M)  \notag\\
\leq& \min_{\partial \Omega} u_{\nu \nu} + |D u| |D^2 d| +|D u|  + |D \varphi| + A + \frac{1} {2}M,
\end{align}
hence \eqref{5.19} holds.

To prove $P$ attains its maximum only on $\partial \Omega$, we assume $P$ attains its maximum at some point  $x_0 \in \Omega_\mu$ by contradiction. Rotating the coordinates, we can assume
\begin{align}\label{5.35}
D^2 u(x_0) \text{ is diagonal}.
\end{align}
In the following, all the calculations are at $x_0$.

Firstly, we have
\begin{align}\label{5.36}
0 = P_i =& \beta d_i [-\sum\limits_j {u_j d_j }  + u- \varphi ] + (1+ \beta d)[-\sum\limits_j {(u_{ji} d_j + u_j d_{ji})}   + u_i- \varphi_i ] \notag \\
&+ (A + \frac{1} {2}M)h_i\notag \\
 =&\beta d_i [-\sum\limits_j {u_j d_j }  + u- \varphi ] + (1+ \beta d)[-u_{ii} d_i-\sum\limits_j { u_j d_{ji}}   + u_i- \varphi_i ] \notag \\
&+ (A + \frac{1} {2}M)h_i,
\end{align}
and
\begin{align}\label{5.37}
0 \geq P_{ii}  =& \beta d_{ii} [-\sum\limits_j {u_j d_j } + u- \varphi] + 2\beta d_i [-\sum\limits_j {(u_{ji} d_j + u_j d_{ji})}   + u_i- \varphi_i]  \notag \\
&+ (1+ \beta d)[-\sum\limits_j {(u_{jii} d_j +2 u_{ji} d_{ji} + u_j d_{jii})} + u_{ii} - \varphi_{ii} ] + (A + \frac{1} {2}M)h_{ii}   \notag \\
=&  \beta d_{ii} [-\sum\limits_j {u_j d_j } + u- \varphi] + 2\beta d_i [-u_{ii} d_i-\sum\limits_j {u_j d_{ji}}   + u_i- \varphi_i]  \notag \\
&+ (1+ \beta d)[-\sum\limits_j {u_{jii} d_j }  - 2u_{ii} d_{ii}  - \sum\limits_j {u_j d_{jii} }  + u_{ii} - \varphi_{ii}]  \notag \\
&+ (A + \frac{1} {2}M)h_{ii}   \notag \\
\geq&  - 2\beta u_{ii} d_i ^2  + (1+ \beta d)[-\sum\limits_j {u_{jii} d_j }  - 2u_{ii} d_{ii} + u_{ii}] \notag \\
&+ (A + \frac{1} {2}M)h_{ii} - C_{16},
\end{align}
where $C_{16}$ is a positive constant under control as follows
\begin{align}\label{5.38}
C_{16} =&\beta |D^2 d| \Big[|D u|_{C^0} + |u|_{C^0} + |\varphi|_{C^0}] +2\beta [|D u|_{C^0} |D^2 d|_{C^0}  +|D u|_{C^0}+ |D \varphi|_{C^0}\Big] \notag \\
&+ (1+ \beta \mu)\Big[ |D u|_{C^0} |D^3 d|_{C^0}+ |D^2 \varphi|_{C^0}\Big].
\end{align}

Since $D^2 u(x_0)$ is diagonal, we know $F^{ij} =0$ for $i \ne j$. From the equation \eqref{1.2}, we have
\begin{align}
 \sum\limits_{i = 1}^n {F^{ii} u_{ii} }  =& (k-l) f >0, \notag \\
\sum\limits_{i = 1}^n {F^{ii} u_{iij} }  =&  f_j, \notag
\end{align}
hence
\begin{align}\label{5.39}
0 \geq& \sum\limits_{i = 1}^n {F^{ii} P_{ii} } \notag \\
\geq&  - 2\beta \sum\limits_{i = 1}^n {F^{ii} u_{ii} d_i ^2 }  + (1+ \beta d)[-\sum\limits_{i,j} {F^{ii} u_{jii} d_j } - 2\sum\limits_{i = 1}^n {F^{ii} u_{ii} d_{ii} } +\sum\limits_{i = 1}^n {F^{ii} u_{ii}}]  \notag \\
&+ (A + \frac{1} {2}M)\sum\limits_{i = 1}^n {F^{ii} h_{ii} }  - C_{16}\sum\limits_{i = 1}^n {F^{ii} }   \notag \\
\geq&  - 2\beta \sum\limits_{i = 1}^n {F^{ii} u_{ii} d_i ^2 }  - 2(1+ \beta d)\sum\limits_{i = 1}^n {F^{ii} u_{ii} d_{ii} }   \notag \\
&+ [(A + \frac{1} {2}M)c_4  - C_{17}](\sum\limits_{i = 1}^n {F^{ii} }  + 1),
\end{align}
where $C_{17} = \max \{C_{16}, (1+ \beta \mu)|D f|_{C^0} \}$.

Denote $B = \{ i:\beta d_i ^2  < \frac{1}{n},1 \leq i \leq n\}$ and $G = \{ i:\beta d_i ^2  \geq  \frac{1}{n},1 \leq i \leq n\}$. We choose $\beta \geq \frac{1}{\mu} > 1$, so
\begin{align}\label{5.40}
d_i ^2 < \frac{1}{n} = \frac{1}{n} |D d|^2, \quad i \in B.
\end{align}
It holds $ \sum_{i \in B} d_i ^2 < 1 = |D d|^2$, and $G$ is not empty. Hence for any $i \in G$, it holds
\begin{align}\label{5.41}
d_{i} ^2 \geq \frac{1}{n\beta}.
\end{align}
and from \eqref{5.36}, we have
\begin{align}\label{5.42}
u_{i i }  =  - \frac{{1-2d}}{1+ \beta d}(A + \frac{1}{2}M) + \frac{{\beta [-\sum\limits_j {u_j d_j }  + u- \varphi ]}}
{1+ \beta d} + \frac{{-\sum\limits_j { u_j d_{ji}}   + u_i- \varphi_i }}{{d_{i } }}.
\end{align}
We choose $A \geq 5 \beta \Big[| D u|_{C^0} + |u|_{C^0}+ |\varphi|_{C^0}] + 5 \sqrt {n\beta} [|D u|_{C^0} |D^2 d|_{C^0} + |D u|_{C^0} + |D \varphi|_{C^0}\Big]=:A_2$, such that for any $i \in G$
\begin{align}\label{5.43}
&\Big| \frac{{\beta [-\sum\limits_j {u_j d_j }  + u- \varphi ]}}{1+ \beta d} + \frac{{-\sum\limits_j { u_j d_{ji}}   + u_i- \varphi_i }}{{d_{i } }}\Big|  \notag \\
\leq& \beta [| D u| + |u|+ |\varphi|] + \sqrt {n\beta} [|D u| |D^2 d| + |D u| + |D \varphi|]  \notag \\
\leq& \frac{A}{5},
\end{align}
then we can get
\begin{align}\label{5.44}
 - \frac{{6A}}{5} - \frac{M}{2} \leqslant u_{i i }  \leq  - \frac{A+M}{5}, \quad \forall \quad i \in G.
\end{align}
Also there is an $i_0  \in G$ such that
\begin{align}\label{5.45}
d_{i_0} ^2 \geq \frac{1}{n} |D d|^2 = \frac{1}{n}.
\end{align}
From \eqref{5.39}, we have
\begin{align}\label{5.46}
0 \geq \sum\limits_{i = 1}^n {F^{ii} P_{ii} }  \geq&  - 2\beta \sum\limits_{i \in G} {F^{ii} u_{ii} d_i ^2 }  - 2\beta \sum\limits_{i \in B} {F^{ii} u_{ii} d_i ^2 }  \notag \\
& - 2(1+ \beta d)\sum\limits_{u_{ii}  > 0} {F^{ii} u_{ii} d_{ii} }  - 2(1+ \beta d)\sum\limits_{u_{ii}  < 0} {F^{ii} u_{ii} d_{ii} }  \notag \\
&+ [(A + \frac{1} {2}M)c_4  - C_{17}](\sum\limits_{i = 1}^n {F^{ii} }  + 1) \notag \\
\geq&  - 2\beta \sum\limits_{i \in G} {F^{ii} u_{ii} d_i ^2 }  - 2\beta \sum\limits_{i \in B} {F^{ii} u_{ii} d_i ^2 }  +  4 \kappa_{max} \sum\limits_{u_{ii}  < 0} {F^{ii} u_{ii} }\notag \\
&+ [(A + \frac{1} {2}M)c_4  - C_{17}](\sum\limits_{i = 1}^n {F^{ii} }  + 1),
\end{align}
where $\kappa_{max}$ is defined as in \eqref{5.25}. Direct calculations yield
\begin{align}\label{5.47}
 - 2\beta \sum\limits_{i \in G} {F^{ii} u_{ii} d_i ^2 }  \geq  - 2\beta F^{i_0 i_0 } u_{i_0 i_0 } d_{i_0 } ^2  \geq  - \frac{2\beta }
{{n}}F^{i_0 i_0 } u_{i_0 i_0 },
\end{align}
and
\begin{align}\label{5.48}
- 2\beta \sum\limits_{i \in B} {F^{ii} u_{ii} d_i ^2 }  \geq&  - 2\beta \sum\limits_{i \in B, u_{ii}  > 0} {F^{ii} u_{ii} d_i ^2 }
\geq - \frac{2}{n}  \sum\limits_{i \in B,u_{ii}  > 0} {F^{ii} u_{ii} }  \notag \\
\geq&  - \frac{2}{n}  \sum\limits_{u_{ii}  > 0} {F^{ii} u_{ii} } =  - \frac{2}{n}  [(k-l) f - \sum\limits_{u_{ii}  < 0} {F^{ii} u_{ii} }].
\end{align}

For $u_{i_0 i_0 }<0$, we know from Lemma \ref{lem2.5},
\begin{align}
F^{i_0 i_0 }  \geq c_6 \sum\limits_{i = 1}^n {F^{ii} },
\end{align}
where $c_6 =\frac{{n(k - l)}}{{k(n - l)}}\frac{1}{{n - k + 1}}$. So it holds
\begin{align}\label{5.50}
0 \geq \sum\limits_{i = 1}^n {F^{ii} P_{ii} }  \geq& - \frac{2\beta }
{{n}}F^{i_0 i_0 } u_{i_0 i_0 }  +  (4 \kappa_{max}+\frac{2}{n}) \sum\limits_{u_{ii}  < 0} {F^{ii} u_{ii} }  \notag \\
&+ [(A + \frac{1} {2}M)c_4  - C_{17} - \frac{2 (k-l)}{n} f ](\sum\limits_{i = 1}^n {F^{ii} }  + 1)  \notag \\
\geq& \frac{2\beta } {{n}}c_6  \frac{A+M}{5} \sum\limits_{i = 1}^n {F^{ii} } -  (4\kappa_{max}+\frac{2}{n}) C_9 (1 + M) \sum\limits_{i = 1}^n {F^{ii} }\notag \\
>& 0,
\end{align}
since $\beta \geq 5 n (2\kappa_{max}+\frac{1}{n}) \frac{C_9}{c_6}$. This is a contradiction. So $P$ attains its maximum only on $\partial \Omega$. The proof of Lemma \ref{lem5.3} is complete.

\end{proof}

\subsection{Upper estimate of double normal second derivatives on boundary}

\begin{lemma} \label{lem5.5}
Suppose that $\Omega \subset \mathbb{R}^n$ is a $C^3$ convex and strictly $(k-1)$-convex domain, $f \in C^2(\overline{\Omega})$ is a positive function, $\varphi \in C^3(\partial \Omega)$ and $u \in C^3(\Omega)\cap C^2(\overline \Omega)$ is the $k$-admissible solution of Hessian quotient equation \eqref{1.2}, then we have
\begin{align}\label{5.51}
\max_{\partial \Omega} u_{\nu \nu}  \leq C_{18},
\end{align}
where $C$ depends on $n$, $k$, $l$, $\Omega$, $\inf f$, $|f|_{C^2}$ and $|\varphi|_{C^3}$.
\end{lemma}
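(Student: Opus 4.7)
The plan is to parallel the proof of Lemma \ref{lem5.3}, with Lemma \ref{lem2.7} taking the structural role that Lemma \ref{lem2.5} played there. Set $M_+ := \max_{\partial\Omega} u_{\nu\nu}$, attained at some $z_0 \in \partial\Omega$, and introduce on $\Omega_\mu$ the test function
\[
\tilde P(x) = -(1 + \beta d)\bigl[Du \cdot (-Dd) + u - \varphi\bigr] + \bigl(A + \tfrac{1}{2}M_+\bigr)h(x),
\]
with $h(x) = -d + d^2$ the barrier of Lemma \ref{lem5.4} and $\beta, A$ large constants to be fixed at the end. The sign in front of the first bracket is reversed compared with \eqref{5.29}, precisely so that at a boundary maximum the Hopf inequality produces a factor $-u_{\nu\nu}(z_0) = -M_+$. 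The Neumann condition together with $\nu = -Dd$ on $\partial\Omega$ gives $\tilde P \equiv 0$ on $\partial\Omega$, and for $A$ sufficiently large $\tilde P < 0$ on $\partial\Omega_\mu \setminus \partial\Omega$, exactly as in \eqref{5.33}.

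Assuming the maximum of $\tilde P$ over $\overline{\Omega_\mu}$ is attained only on $\partial\Omega$, the Hopf boundary point lemma at $z_0$ yields, after using the boundary condition, $\nu^j D_j\nu^i = 0$, and $h_\nu|_{\partial\Omega} = 1$,
\[
0 \leq \tilde P_\nu(z_0) = -u_{\nu\nu}(z_0) + \bigl(A + \tfrac{1}{2}M_+\bigr) + O(1) = -\tfrac{1}{2}M_+ + A + O(1),
\]
which yields $M_+ \leq C$ at once.

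Excluding an interior maximum is the substance of the argument. Suppose for contradiction that $\tilde P$ attains its max at $x_0 \in \Omega_\mu$ and rotate so $D^2 u(x_0)$ is diagonal, writing $\lambda_i = u_{ii}(x_0)$. The first order condition $D\tilde P(x_0) = 0$, solved for $u_{ii}$ exactly as in \eqref{5.42} but with the reversed sign of the first term, now forces
\[
u_{ii}(x_0) \geq c\,(A + M_+) \quad \text{for all } i \in G = \bigl\{i : \beta d_i^2 \geq \tfrac{1}{n}\bigr\},
\]
once $A$ is chosen large enough. In particular there is $i_0 \in G$ with $\lambda_{i_0} \gtrsim M_+$. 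Since $\sum_i F^{ii} u_{ii} = (k-l)f$ is bounded and $\lambda(D^2 u(x_0)) \in \Gamma_k$, at least one eigenvalue, say $\lambda_n$, must be negative and comparable in magnitude to $\lambda_{i_0}$, placing the eigenvalue vector into the regime of Lemma \ref{lem2.7}. That lemma then yields $F^{i_0 i_0} \geq c_1 \sum_j F^{jj}$.

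The contradiction is obtained by computing $\sum_i F^{ii}\tilde P_{ii}(x_0) \leq 0$ as in \eqref{5.39}--\eqref{5.50}: combining Lemma \ref{lem5.4}, the convexity bound $d_{ii} \geq -\kappa_{\max}$, and the identities $\sum F^{ii} u_{ii} = (k-l)f$ and $\sum F^{ii} u_{iij} = f_j$, one arrives at an inequality of the form
\[
0 \geq \sum_i F^{ii}\tilde P_{ii}(x_0) \geq \tfrac{c_1}{n}\beta\,\lambda_{i_0}\sum_i F^{ii} - C(1 + M_+)\sum_i F^{ii} - C,
\]
which becomes strictly positive once $\beta$ is chosen large enough in terms of $c_1$, $\kappa_{\max}$, $c_4$, and $C_9$. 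The main technical obstacle is verifying the hypotheses of Lemma \ref{lem2.7} at $x_0$: producing a comparably negative eigenvalue $\lambda_n$ and the ratio bound $\lambda_1 \geq \delta \lambda_2$. This will likely require splitting into subcases according to whether one or several indices in $G$ carry the large positive curvature, and exploiting $\Gamma_k$-admissibility together with the PDE-induced bound on $\sum F^{ii} u_{ii}$ to extract the requisite negative eigenvalue.
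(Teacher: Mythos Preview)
Your outline follows the paper's strategy closely, but there are two genuine gaps.

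\textbf{Missing reduction step.} Before introducing $\tilde P$, the paper first disposes of the case $\max_{\partial\Omega}u_{\nu\nu}<-\min_{\partial\Omega}u_{\nu\nu}$ by invoking Lemma~\ref{lem5.3}. The point of this reduction is not cosmetic: once one knows $M_+=\max_{\partial\Omega}|u_{\nu\nu}|$, Lemma~\ref{lem5.2} gives the global bound $|D^2u|\le C_9(1+M_+)$ at every point of $\Omega$, in particular at $x_0$. You use this bound twice without having it. First, your final inequality contains a term $-C(1+M_+)\sum_i F^{ii}$ which is supposed to absorb $(4\kappa_{\max}+\tfrac{2}{n})\sum_{u_{ii}>0}F^{ii}u_{ii}$; that absorption requires $u_{ii}\le C_9(1+M_+)$ for all $i$. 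Second, the hypothesis $\lambda_1\ge\delta\lambda_2$ of Lemma~\ref{lem2.7} is obtained in the paper from $u_{i_0i_0}\ge\tfrac{2M_+}{5}$ together with $u_{22}\le C_9(1+M_+)$, yielding $\delta=\tfrac{2}{5C_9}$. Without the reduction you have no a~priori control of the second largest eigenvalue in terms of $M_+$, and the $\delta$-condition cannot be checked.

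\textbf{The negative eigenvalue need not exist.} Your sentence ``Since $\sum_i F^{ii}u_{ii}=(k-l)f$ is bounded \ldots\ at least one eigenvalue must be negative and comparable in magnitude to $\lambda_{i_0}$'' is false: when $\lambda_{i_0}$ is the largest eigenvalue, $F^{i_0i_0}$ is the smallest coefficient, so a huge $\lambda_{i_0}$ is perfectly compatible with $\sum_i F^{ii}\lambda_i=(k-l)f$ and all $\lambda_i\ge0$. This is exactly why the paper splits into three cases at $x_0$: in Case~I ($\lambda_n\ge0$) and Case~II ($0<-\lambda_n<\varepsilon\lambda_{i_0}$) Lemma~\ref{lem2.7} is \emph{not} invoked at all; instead the bad term $\sum_{u_{ii}>0}F^{ii}u_{ii}$ is controlled directly (it equals $(k-l)f$ in Case~I, and is bounded by $(k-l)f+\varepsilon\lambda_{i_0}\sum F^{ii}$ in Case~II), and the barrier term $-(A+\tfrac12M_+)c_4\sum_iF^{ii}$ from Lemma~\ref{lem5.4} already yields the contradiction. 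Only in Case~III, where $-\lambda_n\ge\varepsilon\lambda_{i_0}$ is \emph{assumed}, does Lemma~\ref{lem2.7} enter. Your closing paragraph hints at subcases, but frames them as a matter of verifying Lemma~\ref{lem2.7}'s hypotheses; in fact two of the three subcases bypass that lemma entirely.
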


\begin{proof}

Firstly, we assume $ \max\limits_{\partial \Omega} u_{\nu \nu} > 0$, otherwise there is nothing to prove. Also, if $\max\limits_{\partial \Omega} u_{\nu \nu} < - \min\limits_{\partial \Omega} u_{\nu \nu}$, that is $\max\limits_{\partial \Omega} |u_{\nu \nu}|= -\min\limits_{\partial \Omega} u_{\nu \nu}$, we can easily get from Lemma \ref{lem5.3}
\begin{align}\label{5.52}
\max_{\partial \Omega} u_{\nu \nu} < - \min_{\partial \Omega} u_{\nu \nu} \leq C_{15}.
\end{align}
In the following, we assume $\max\limits_{\partial \Omega} u_{\nu \nu} \geq - \min\limits_{\partial \Omega} u_{\nu \nu}$, that is $\max\limits_{\partial \Omega} |u_{\nu \nu}|= \max\limits_{\partial \Omega} u_{\nu \nu}$. Denote $M = \max\limits_{\partial \Omega} u_{\nu \nu} >0$ and let $\widetilde{z_0} \in \partial \Omega$ such that $\max\limits_{\partial \Omega} u_{\nu \nu} =u_{\nu \nu} (\widetilde{z_0})$.

Motivated by Ma-Qiu \cite{MQ15}, we consider the test function
\begin{align}\label{5.53}
\widetilde{P}(x) = ( 1 + \beta d)[D u \cdot (-D d) + u(x)  - \varphi (x) ] - (A + \frac{1} {2}M)h(x),
\end{align}
where
\begin{align*}
\beta =&\max \{\frac{1}{\mu}, \frac{5 n}{2} (2\kappa_{max}+\frac{1}{n}) \frac{C_9}{c_1} \}, \\
A=& \max\{A_1, A_2, A_3, A_4,\frac{C_{20}}{c_4} \}.
\end{align*}

It is easy to know $\widetilde{P} \geq 0$ on $\partial \Omega_\mu$. Precisely, on $\partial \Omega$, we have $d = h= 0$, and $- D d = \nu$, so we can get
\begin{align}\label{5.54}
\widetilde{P}(x) =0,  \quad \text{ on } \partial \Omega.
\end{align}
On $\partial \Omega_\mu \setminus \partial \Omega$, we have $d = \mu$, and
\begin{align}\label{5.55}
\widetilde{P}(x) \geq& - (1+ \beta \mu) [|D u|  +  |u| + |\varphi|]  - (A + \frac{1} {2}M) [-\mu + \mu^2] \notag \\
\geq& - (1+ \beta \mu) [|D u|  +  |u| + |\varphi|]  + \frac{9}{10} \mu A > 0,
\end{align}
since $A \geq \frac{10}{9}(\frac{1}{\mu} + \beta) [|D u|_{C^0}  +  |u|_{C^0} + |\varphi|_{C^0}] +1 =:A_1$. In the following, we want to prove $\widetilde{P}$ attains its minimum only on $\partial \Omega$. Then we can get
\begin{align}\label{5.56}
0 \geq \widetilde{P}_\nu (\widetilde{z_0})  =&  [u_{\nu \nu}(\widetilde{z_0}) - \sum\limits_j {u_j d_{j\nu} }   + u_\nu  - \varphi_\nu] - (A + \frac{1} {2}M)  \notag\\
\geq& \max_{\partial \Omega} u_{\nu \nu} - |D u| |D^2 d| -|D u|  - |D \varphi| - A  - \frac{1} {2}M,
\end{align}
hence \eqref{5.51} holds.

To prove $\widetilde{P}$ attains its minimum only on $\partial \Omega$, we assume $\widetilde{P}$ attains its minimum at some point  $\widetilde{x_0} \in \Omega_\mu$ by contradiction. Rotating the coordinates, we can assume
\begin{align}\label{5.57}
D^2 u(\widetilde{x_0}) \text{ is diagonal}.
\end{align}
In the following, all the calculations are at $\widetilde{x_0}$.

Firstly, we have
\begin{align}\label{5.58}
0 = \widetilde{P}_i =& \beta d_i [-\sum\limits_j {u_j d_j }  + u- \varphi ] + (1+ \beta d)[-\sum\limits_j {(u_{ji} d_j + u_j d_{ji})}   + u_i- \varphi_i ] \notag \\
&- (A + \frac{1} {2}M)h_i\notag \\
 =&\beta d_i [-\sum\limits_j {u_j d_j }  + u- \varphi ] + (1+ \beta d)[-u_{ii} d_i-\sum\limits_j { u_j d_{ji}}   + u_i- \varphi_i ] \notag \\
& - (A + \frac{1} {2}M)h_i,
\end{align}
and
\begin{align}\label{5.59}
0 \leq \widetilde{P}_{ii}  =& \beta d_{ii} [-\sum\limits_j {u_j d_j } + u- \varphi] + 2\beta d_i [-\sum\limits_j {(u_{ji} d_j + u_j d_{ji})}   + u_i- \varphi_i]  \notag \\
&+ (1+ \beta d)[-\sum\limits_j {(u_{jii} d_j +2 u_{ji} d_{ji} + u_j d_{jii})} + u_{ii} - \varphi_{ii} ] - (A + \frac{1} {2}M)h_{ii}   \notag \\
=&  \beta d_{ii} [-\sum\limits_j {u_j d_j } + u- \varphi] + 2\beta d_i [-u_{ii} d_i-\sum\limits_j {u_j d_{ji}}   + u_i- \varphi_i]  \notag \\
&+ (1+ \beta d)[-\sum\limits_j {u_{jii} d_j }  - 2u_{ii} d_{ii}  - \sum\limits_j {u_j d_{jii} }  + u_{ii} - \varphi_{ii}]  \notag \\
&- (A + \frac{1} {2}M)h_{ii}   \notag \\
\leq&  - 2\beta u_{ii} d_i ^2  + (1+ \beta d)[-\sum\limits_j {u_{jii} d_j }  - 2u_{ii} d_{ii} + u_{ii}] \notag \\
&- (A + \frac{1} {2}M)h_{ii} + C_{19},
\end{align}
where $C_{19}$ is a positive constant under control as follows
\begin{align}\label{5.60}
C_{19} =&\beta |D^2 d|_{C^0} \Big[|D u|_{C^0} + |u|_{C^0} + |\varphi|_{C^0}] +2\beta [|D u|_{C^0} |D^2 d|_{C^0}  +|D u|_{C^0}+ |D \varphi|_{C^0}\Big] \notag \\
&+ (1+ \beta \mu)\Big[ |D u|_{C^0} |D^3 d|_{C^0}+ |D^2 \varphi|_{C^0}\Big].
\end{align}

Since $D^2 u(\widetilde{x_0})$ is diagonal, we know $F^{ij} =0$ for $i \ne j$. From the equation \eqref{1.2}, we have
\begin{align}
 \sum\limits_{i = 1}^n {F^{ii} u_{ii} }  =& (k-l) f >0, \notag \\
\sum\limits_{i = 1}^n {F^{ii} u_{iij} }  =&  f_j, \notag
\end{align}
hence
\begin{align}\label{5.61}
0 \leq& \sum\limits_{i = 1}^n {F^{ii} \widetilde{P}_{ii} } \notag \\
\leq&  - 2\beta \sum\limits_{i = 1}^n {F^{ii} u_{ii} d_i ^2 }  + (1+ \beta d)[-\sum\limits_{i,j} {F^{ii} u_{jii} d_j }  - 2\sum\limits_{i = 1}^n {F^{ii} u_{ii} d_{ii} } +\sum\limits_{i = 1}^n {F^{ii} u_{ii} }]  \notag \\
&- (A + \frac{1} {2}M)\sum\limits_{i = 1}^n {F^{ii} h_{ii} }  + C_1\sum\limits_{i = 1}^n {F^{ii} }   \notag \\
\leq&  - 2\beta \sum\limits_{i = 1}^n {F^{ii} u_{ii} d_i ^2 }  - 2(1+ \beta d)\sum\limits_{i = 1}^n {F^{ii} u_{ii} d_{ii} }   \notag \\
&+ [-(A + \frac{1} {2}M)c_4  + C_{20}](\sum\limits_{i = 1}^n {F^{ii} }  + 1),
\end{align}
where $C_{20} = \max \{C_{19}, (1+ \beta \mu)[|D f|_{C^0} +(k-l)|f|_{C^0}]\}$.

Denote $B = \{ i:\beta d_i ^2  < \frac{1}{n},1 \leq i \leq n\}$ and $G = \{ i:\beta d_i ^2  \geq  \frac{1}{n},1 \leq i \leq n\}$. We choose $\beta \geq \frac{1}{\mu} >1$, so
\begin{align}\label{5.62}
d_i ^2 < \frac{1}{n} = \frac{1}{n} |D d|^2, \quad i \in B.
\end{align}
It holds $ \sum_{i \in B} d_i ^2 < 1 = |D d|^2$, and $G$ is not empty. Hence for any $i \in G$, it holds
\begin{align}\label{5.63}
d_{i} ^2 \geq \frac{1}{n\beta}.
\end{align}
and from \eqref{5.58}, we have
\begin{align}\label{5.64}
u_{i i }  =  \frac{{1-2d}}{1+ \beta d}(A + \frac{1}{2}M) + \frac{{\beta [-\sum\limits_j {u_j d_j }  + u- \varphi ]}}
{1+ \beta d} + \frac{{-\sum\limits_j { u_j d_{ji}}   + u_i- \varphi_i }}{{d_{i } }}.
\end{align}
We choose $A \geq 5 \beta \Big[| D u|_{C^0} + |u|_{C^0}+ |\varphi|_{C^0}] + 5 \sqrt{n \beta} [|D u|_{C^0} |D^2 d|_{C^0} + |D u|_{C^0} + |D \varphi|_{C^0}\Big]=:A_2$, such that for any $i \in G$
\begin{align}\label{5.65}
&\Big| \frac{{\beta [-\sum\limits_j {u_j d_j }  + u- \varphi ]}}{1+ \beta d} + \frac{{-\sum\limits_j { u_j d_{ji}}   + u_i- \varphi_i }}{{d_{i } }}\Big|  \notag \\
\leq& \beta \Big[| D u| + |u|+ |\varphi|] + \sqrt{n \beta} [|D u| |D^2 d| + |D u| + |D \varphi|\Big]  \notag \\
\leq& \frac{A}{5},
\end{align}
then we can get
\begin{align}\label{5.66}
\frac{3A}{5}+ \frac{2M}{5} \leqslant u_{i i }  \leq \frac{6A}{5}+ \frac{M}{2}, \quad \forall \quad i \in G.
\end{align}
Also there is an $i_0  \in G$ such that
\begin{align}\label{5.67}
d_{i_0} ^2 \geq \frac{1}{n} |D d|^2 = \frac{1}{n}.
\end{align}
From \eqref{5.61}, we have
\begin{align}\label{5.68}
0 \leq \sum\limits_{i = 1}^n {F^{ii} \widetilde{P}_{ii} }  \leq&  - 2\beta \sum\limits_{i \in G} {F^{ii} u_{ii} d_i ^2 }  - 2\beta \sum\limits_{i \in B} {F^{ii} u_{ii} d_i ^2 }  \notag \\
&- 2(1+ \beta d)\sum\limits_{u_{ii}  > 0} {F^{ii} u_{ii} d_{ii} }  - 2(1+ \beta d)\sum\limits_{u_{ii}  < 0} {F^{ii} u_{ii} d_{ii} }  \notag \\
&+ [-(A + \frac{1} {2}M)c_4  + C_{20}](\sum\limits_{i = 1}^n {F^{ii} }  + 1) \notag \\
\leq&  - 2\beta \sum\limits_{i \in G} {F^{ii} u_{ii} d_i ^2 }  - 2\beta \sum\limits_{i \in B} {F^{ii} u_{ii} d_i ^2 }  \notag \\
&+4 \kappa_{max} \sum\limits_{u_{ii}  > 0} {F^{ii} u_{ii} } \notag \\
&+ [-(A + \frac{1} {2}M)c_4 + C_{20}](\sum\limits_{i = 1}^n {F^{ii} }  + 1),
\end{align}
where $\kappa_{max}$ is defined as in \eqref{5.25}. Direct calculations yield
\begin{align}\label{5.69}
 - 2\beta \sum\limits_{i \in G} {F^{ii} u_{ii} d_i ^2 }  \leq  - 2\beta F^{i_0 i_0 } u_{i_0 i_0 } d_{i_0 } ^2  \leq  - \frac{2\beta }
{{n}}F^{i_0 i_0 } u_{i_0 i_0 },
\end{align}
and
\begin{align}\label{5.70}
- 2\beta \sum\limits_{i \in B} {F^{ii} u_{ii} d_i ^2 }  \leq&  - 2\beta \sum\limits_{i \in B, u_{ii}  < 0} {F^{ii} u_{ii} d_i ^2 }
\leq - \frac{2}{n}\sum\limits_{i \in B,u_{ii} < 0} {F^{ii} u_{ii} }  \notag \\
\leq&  - \frac{2}{n}\sum\limits_{u_{ii} < 0} {F^{ii} u_{ii} }  =- \frac{2}{n}[(k-l)f - \sum\limits_{u_{ii} > 0} {F^{ii} u_{ii} }] \notag \\
\leq& \frac{2}{n} \sum\limits_{u_{ii} > 0} {F^{ii} u_{ii} }.
\end{align}

So it holds
\begin{align}\label{5.71}
0 \leq \sum\limits_{i = 1}^n {F^{ii} \widetilde{P}_{ii} }  \leq& - \frac{2\beta }
{{n}}F^{i_0 i_0 } u_{i_0 i_0 }  +  (4\kappa_{max}+ \frac{2}{n}) \sum\limits_{u_{ii}  > 0} {F^{ii} u_{ii} } \notag \\
&+ [- (A + \frac{1} {2}M)c_4 + C_{20}](\sum\limits_{i = 1}^n {F^{ii} }  + 1).
\end{align}
We divide into three cases to prove the result. Without generality, we assume that $i_0=1 \in G$, and $u_{22} \geq \cdots \geq u_{nn}$.

$\blacklozenge$ CASE I: $u_{nn} \geq 0$.

In this case, we have
\begin{align}\label{5.72}
(4\kappa_{max}+ \frac{2}{n}) \sum\limits_{u_{ii}  > 0} {F^{ii} u_{ii} } =  (4\kappa_{max}+ \frac{2}{n}) \sum\limits_{i =1}^n {F^{ii} u_{ii} }=   (k-l) (4\kappa_{max}+ \frac{2}{n}) f.
\end{align}
Hence from \eqref{5.71} and \eqref{5.72}
\begin{align}\label{5.73}
0 \leq \sum\limits_{i = 1}^n {F^{ii} \widetilde{P}_{ii} }  \leq& (4\kappa_{max}+ \frac{2}{n}) \sum\limits_{u_{ii}  > 0} {F^{ii} u_{ii} }+ [- (A + \frac{1} {2}M)c_4 + C_{20}](\sum\limits_{i = 1}^n {F^{ii} }  + 1) \notag \\
\leq&  (k-l) (4\kappa_{max}+ \frac{2}{n}) f + [- (A + \frac{1} {2}M)c_4 + C_{20}] \notag \\
<& 0,
\end{align}
since $A \geq \frac{  (k-l) (4\kappa_{max}+ \frac{2}{n}) |f|_{C^0} +C_{20}}{c_4} =: A_3$. This is a contradiction.

$\blacklozenge$ CASE II: $u_{nn} < 0$ and $-u_{nn} < \frac{c_4}{10 (4\kappa_{max}+ \frac{2}{n})}u_{11}$.

In this case, we have
\begin{align}\label{5.74}
(4\kappa_{max}+ \frac{2}{n}) \sum\limits_{u_{ii}  > 0} {F^{ii} u_{ii} } =& (8\kappa_{max}+ \frac{2}{n}) [ (k-l)f - \sum\limits_{u_{ii} < 0} {F^{ii} u_{ii} }] \notag  \\
\leq& (4\kappa_{max}+ \frac{2}{n}) [ (k-l)f - u_{nn}\sum\limits_{i=1}^n {F^{ii} }] \notag  \\
<& (4\kappa_{max}+ \frac{2}{n})  (k-l)f +  \frac{c_4}{10}u_{11} \sum\limits_{i=1}^n {F^{ii} } \notag  \\
\leq& (4\kappa_{max}+ \frac{2}{n})  (k-l)f + \frac{c_4}{10} (\frac{6A}{5}+ \frac{M}{2} )\sum\limits_{i=1}^n {F^{ii} }.
\end{align}
Hence from \eqref{5.71} and \eqref{5.74}
\begin{align}\label{5.75}
0 \leq \sum\limits_{i = 1}^n {F^{ii} \widetilde{P}_{ii} }  \leq&  (4\kappa_{max}+ \frac{2}{n}) \sum\limits_{u_{ii}  > 0} {F^{ii} u_{ii} } \notag \\
&+ [- (A + \frac{1} {2}M)c_4 + C_{20}](\sum\limits_{i = 1}^n {F^{ii} }  + 1) \notag \\
<&  (4\kappa_{max}+ \frac{2}{n})  (k-l)f + \frac{c_4}{10} (\frac{6A}{5}+ \frac{M}{2} )\sum\limits_{i=1}^n {F^{ii} } \notag \\
&+ [- (A + \frac{1} {2}M)c_4 + C_{20}](\sum\limits_{i = 1}^n {F^{ii} }  + 1) \notag \\
<&0,
\end{align}
since $A \geq \max\{\frac{  (k-l) (4\kappa_{max}+ \frac{2}{n}) |f|_{C^0} +C_{20}}{c_4}, \frac{25 C_{20}}{3c_4} \} =: A_4$. This is a contradiction.

$\blacklozenge$ CASE III: $u_{nn} < 0$ and $-u_{nn} \geq \frac{c_4}{10 (4\kappa_{max}+ \frac{2}{n})} u_{11} $.

In this case, we have $u_{11} \geq \frac{3A}{5}+ \frac{2M}{5}$, and $u_{22} \leq C_9 (1+M)$. So
\begin{align} \label{5.76}
u_{11} \geq \frac{2}{5C_9} u_{22}.
\end{align}
Let $\delta = \frac{2}{5C_9}$ and $\varepsilon =\frac{c_4}{10 (4\kappa_{max}+ \frac{2}{n})}$, \eqref{2.15} in Lemma \ref{lem2.7} holds, that is
\begin{align} \label{5.77}
F^{11} \geq c_1 \sum\limits_{i = 1}^n {F^{ii} },
\end{align}
where $c_1 = \frac{n} {k}\frac{{k - l}}{{n - l}}\frac{c_0^2} {{n - k + 1}}$ and  $c_0 = \min \{ \frac{{\varepsilon ^2 \delta ^2}}{{2(n - 2)(n - 1)}},\frac{{\varepsilon ^2 \delta }}{{4(n- 1)}}\}$. Hence from \eqref{5.71} and \eqref{5.77}
\begin{align}\label{5.78}
0 \leq \sum\limits_{i = 1}^n {F^{ii} \widetilde{P}_{ii} }  \leq& - \frac{2\beta }
{{n}}F^{11 } u_{11}  +  (4\kappa_{max}+ \frac{2}{n}) \sum\limits_{u_{ii}  > 0} {F^{ii} u_{ii} } \notag \\
&+ [- (A + \frac{1} {2}M)c_4 + C_{20}](\sum\limits_{i = 1}^n {F^{ii} }  + 1)\notag \\
\leq& - \frac{2\beta } {{n}}c_1  (\frac{3A}{5}+ \frac{2M}{5}) \sum\limits_{i = 1}^n {F^{ii} } + (4\kappa_{max}+ \frac{2}{n}) C_9(1 + M) \sum\limits_{i = 1}^n {F^{ii} }\notag \\
<& 0,
\end{align}
since $\beta \geq \frac{5n}{2} (2\kappa_{max}+\frac{1}{n}) \frac{C_9}{c_1}$. This is a contradiction.

So $\widetilde{P}$ attains its maximum only on $\partial \Omega$. The proof of Lemma \ref{lem5.5} is complete.

\end{proof}

Following above proofs, we can also obtain the estimates of second order derivatives of $u^\varepsilon$ in \eqref{1.4}, and the strict convexity of $\Omega$ is important in reducing global second derivatives to double normal second derivatives on boundary. So we have

\begin{theorem} \label{th5.6}
Suppose $\Omega \subset \mathbb{R}^n$ is a $C^4$ strictly convex domain, $f \in C^2(\overline{\Omega})$ is a positive function, $\varphi \in C^3(\partial \Omega)$ and $u^\varepsilon \in C^4(\Omega)\cap C^3(\overline \Omega)$ is the $k$-admissible solution of Hessian quotient equation \eqref{1.4} with $\varepsilon >0$ sufficiently small,, then we have
\begin{align}\label{5.79}
\sup_{\Omega} |D^2 u^\varepsilon |  \leq \overline{M_2},
\end{align}
where $\overline{M_2}$ depends on $n$, $k$, $l$, $\Omega$, $\overline{M_1}$, $\inf f$, $|f|_{C^2}$ and $|\varphi|_{C^3}$.
\end{theorem}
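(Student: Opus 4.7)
The plan is to follow verbatim the three-step strategy used for Theorem \ref{th5.1} (Lemma \ref{lem5.2}, Lemma \ref{lem5.3}, Lemma \ref{lem5.5}), but to re-engineer all constants so that they depend only on $n,k,l,\Omega,\overline{M_1},\inf f,|f|_{C^2},|\varphi|_{C^3}$. The two mechanisms that must replace what the original argument extracted from the unperturbed boundary condition are: (i) the a priori bounds $\abs{\varepsilon u^\varepsilon}_{C^0}\le\overline{M_0}$ (Theorem \ref{th3.2}) and $\abs{Du^\varepsilon}_{C^0}\le\overline{M_1}$ (Theorem \ref{th4.4}), used wherever the proof of Theorem \ref{th5.1} uses $\abs{u}_{C^0}$; and (ii) strict convexity of $\Omega$, used to supply a uniformly positive good term that the $-u$ in the unperturbed boundary condition used to provide.

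For Step 1 (reduction to $\max_{\partial\Omega}\abs{u^\varepsilon_{\nu\nu}}$), I would use the same auxiliary function $v(x,\xi)=u_{\xi\xi}-v'_\varepsilon(x,\xi)+K\abs{x}^2+\abs{Du}^2$ with
\[
v'_\varepsilon(x,\xi)=2(\xi\cdot\nu)\,\xi'\cdot\bigl(D\varphi-\varepsilon Du-\varepsilon u_l D\nu^l\bigr),
\]
obtained by tangentially differentiating $u^\varepsilon_\nu=-\varepsilon u^\varepsilon+\varphi$. The interior concavity argument is unchanged, so the maximum of $v$ is attained on $\partial\Omega$. In the tangential case $\xi_0\cdot\nu=0$, redoing the computation \eqref{5.5}--\eqref{5.11} gives
\[
u_{\xi_0\xi_0\nu}\le -\varepsilon\,u_{\xi_0\xi_0}-2(D_1\nu^1)\,u_{\xi_0\xi_0}+C(1+\abs{u_{\nu\nu}}).
\]
The first term degenerates as $\varepsilon\to 0$, but strict convexity of $\Omega$ yields $D_i\nu^j\ge \kappa_0\,\delta_{ij}>0$ (uniformly on $\partial\Omega$) on the tangent space, so the $-2(D_1\nu^1)u_{\xi_0\xi_0}$ term alone is uniformly negative, and Hopf's lemma still forces $u_{\xi_0\xi_0}(x_0)\le C(1+\abs{u_{\nu\nu}})$. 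The non-tangential case is handled by the same decomposition $\xi_0=\alpha\tau+\beta\nu$ used in \eqref{5.15}--\eqref{5.18}, since the bilinear structure of $v'_\varepsilon$ in $(\xi\cdot\nu,\xi')$ is preserved.

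For Step 2 (lower bound), I would use
\[
P(x)=(1+\beta d)\bigl[Du\cdot(-Dd)+\varepsilon u^\varepsilon-\varphi\bigr]+\bigl(A+\tfrac12 M\bigr)h(x),
\]
which vanishes on $\partial\Omega$ because the bracket equals $u^\varepsilon_\nu+\varepsilon u^\varepsilon-\varphi=0$ there. The barrier $h=-d+d^2$ from Lemma \ref{lem5.4} is admissible since strict convexity implies strict $(k-1)$-convexity. Running the proof of Lemma \ref{lem5.3}, every occurrence of $\abs{u}$ is absorbed as either $\abs{Du}\le\overline{M_1}$ or $\abs{\varepsilon u^\varepsilon}\le\overline{M_0}$, so the constants $C_{16},C_{17},\beta,A$ can be chosen independently of $\varepsilon$. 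The key algebraic input, Lemma \ref{lem2.5}, provides $F^{i_0 i_0}\ge c_6\sum F^{ii}$ whenever $u_{i_0 i_0}<0$, which delivers the same contradiction $\sum F^{ii}P_{ii}>0$. Step 3 (upper bound) is analogous with $\widetilde P(x)=(1+\beta d)[Du\cdot(-Dd)+\varepsilon u^\varepsilon-\varphi]-(A+\tfrac12 M)h(x)$; the three-case analysis of Lemma \ref{lem5.5} (in particular the application of Lemma \ref{lem2.7} in Case III with $\delta$ and $\varepsilon_0$ depending only on $C_9,c_4,\kappa_{\max}$) transfers unchanged.

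The main obstacle is Step 1: the good coercive term $-u_{\xi_0\xi_0}$ that the Robin condition $u_\nu=-u+\varphi$ produced through differentiating $-u$ degenerates to $-\varepsilon u_{\xi_0\xi_0}$ and is useless in the $\varepsilon\to 0$ limit. Strict convexity of $\Omega$ is precisely what repairs this: it promotes the curvature term $-(D_i\nu^j)u_{\xi_0\xi_0}$ from nonnegative to uniformly positive, giving the $\varepsilon$-independent estimate needed. After Steps 2--3 produce $\abs{u^\varepsilon_{\nu\nu}}\le C$ on $\partial\Omega$ independent of $\varepsilon$, Step 1 closes the loop via \eqref{5.2} to yield \eqref{5.79}.
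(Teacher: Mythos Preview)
Your proposal is correct and essentially coincides with the paper's own treatment: the paper does not give a separate proof of Theorem \ref{th5.6} but simply states that one follows the proofs of Lemmas \ref{lem5.2}--\ref{lem5.5}, noting that ``the strict convexity of $\Omega$ is important in reducing global second derivatives to double normal second derivatives on boundary'' --- exactly your Step 1 observation that the degenerating $-\varepsilon u_{\xi_0\xi_0}$ is rescued by the curvature term $-(D_{\xi_0}\nu^{\xi_0})u_{\xi_0\xi_0}\le -\kappa_{\min}u_{\xi_0\xi_0}$.

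One small correction: in your definition of $v'_\varepsilon$ the last term should be $-u_l D\nu^l$, not $-\varepsilon u_l D\nu^l$. This term arises from tangentially differentiating $u_l\nu^l$ (the left side of the boundary condition), not from the right side, so it carries no $\varepsilon$. With the correct $v'_\varepsilon=2(\xi\cdot\nu)\,\xi'\cdot(D\varphi-\varepsilon Du-u_lD\nu^l)$ the identity $v'_\varepsilon(x_0,\xi_0)=2\alpha\beta\,u_{\tau\nu}$ holds exactly and the non-tangential decomposition \eqref{5.16} goes through without an error term; with your version it still works up to a bounded discrepancy, but the clean form is preferable.
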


\section{Existence of the boundary problems}

In this section we complete the proof of the Theorem \ref{th1.1} and Theorem \ref{th1.2}.

\subsection{Proof of Theorem \ref{th1.1}}

For the Neumann problem of Hessian quotient equations \eqref{1.2}, we have established the $C^0$, $C^1$ and $C^2$ estimates in Section 3, Section 4, Section 5, respectively. By the global $C^2$ a priori estimates, Hessian quotient equations \eqref{1.2} is uniformly elliptic in $\overline \Omega$.
Due to the concavity of Hessian quotient operator $[\frac{\sigma _k (\lambda)}{\sigma _l (\lambda)}]^{\frac{1}{k-l}}$ in $\Gamma_k$, we can get the global H\"{o}lder estimates of second derivative following the discussions in \cite{LT86}, that is, we can get
\begin{align}\label{6.1}
|u|_{C^{2,\alpha}(\overline \Omega)} \leq C,
\end{align}
where $C$ and $\alpha$ depend on $n$, $k$, $l$, $\Omega$, $\inf f$, $|f|_{C^2}$ and $|\varphi|_{C^3}$.
From \eqref{6.1} one also obtains $C^{3,\alpha}(\overline \Omega)$ estimates by differentiating the equation
\eqref{1.2} and apply the Schauder theory for linear, uniformly elliptic equations.

Applying the method of continuity (see \cite{GT}, Theorem 17.28), the existence of the classical solution holds.
By the standard regularity theory of uniformly elliptic partial differential equations, we can obtain the higher regularity.

\subsection{Proof of Theorem \ref{th1.2}}

The proof is following the idea of Qiu-Xia \cite{QX16}.

By a similar proof of Theorem \ref{th1.1}, we know there exists a unique $k$-admissible solution $u^\varepsilon \in C^{3, \alpha}(\overline \Omega)$ to \eqref{1.4} for any small $\varepsilon >0$. Let $v^\varepsilon =u^\varepsilon - \frac{1}{|\Omega|} \int_\Omega u^\varepsilon$, and it is easy to know $v^\varepsilon$ satisfies
\begin{align}
\left\{ \begin{array}{l}
\frac{\sigma _k (D^2 v^\varepsilon)}{\sigma _l (D^2 v^\varepsilon)} = f(x),  \quad \text{in} \quad \Omega,\\
(v^\varepsilon)_\nu = - \varepsilon v^\varepsilon - \frac{1}{|\Omega|} \int_\Omega \varepsilon u^\varepsilon + \varphi(x),\qquad \text{on} \quad \partial \Omega.
 \end{array} \right.
\end{align}
By the global gradient estimate \eqref{4.36}, it is easy to know $\varepsilon \sup |D u^\varepsilon | \rightarrow 0$. Hence there is a constant $c$ and a function $v \in C^{2}(\overline \Omega)$, such that $-\varepsilon u^\varepsilon \rightarrow c$, $-\varepsilon v^\varepsilon \rightarrow 0$, $\frac{1}{|\Omega|} \int_\Omega \varepsilon u^\varepsilon \rightarrow c$ and $v^\varepsilon \rightarrow v$ uniformly in $C^{2}(\overline \Omega)$ as $\varepsilon \rightarrow 0$. It is easy to verify that $v$ is a solution of
\begin{align}
\left\{ \begin{array}{l}
\frac{\sigma _k (D^2 v)}{\sigma _l (D^2 v)} = f(x),  \quad \text{in} \quad \Omega,\\
v_\nu = c + \varphi(x),\qquad \text{on} \quad \partial \Omega.
 \end{array} \right.
\end{align}
If there is another function $v_1 \in C^{2}(\overline \Omega)$ and another constant $c_1$ such that
\begin{align}
\left\{ \begin{array}{l}
\frac{\sigma _k (D^2 v_1)}{\sigma _l (D^2 v_1)} = f(x),  \quad \text{in} \quad \Omega,\\
(v_1)_\nu = c_1 + \varphi(x),\qquad \text{on} \quad \partial \Omega.
 \end{array} \right.
\end{align}
Applying the maximum principle and Hopf Lemma, we can know $c = c_1$ and $v - v_1$ is a constant.
By the standard regularity theory of uniformly elliptic partial differential equations, we can obtain the higher regularity.

\textbf{Acknowledgement}
 The authors would like to express sincere gratitude to Prof. Xi-Nan Ma for the constant encouragement in this subject.

\end{document}